\newcommand{\Z}{\mathbb{Z}}
\newtheorem{thm}{Theorem}[section]
\newtheorem{defn}{Definition}[section]
\newtheorem{lemma}{Lemma}[section]
\newtheorem{cor}{Corollary}[section]
\theoremstyle{definition}
\def\N{{\mathbb N}}
\def\Z{{\mathbb Z}}
\def\R{{\mathbb R}}
\newcommand{\F}{{\mathcal F}}
\newcommand{\e}{\mathrm e}
\newcommand{\Cov}{\mbox{$\mathrm Cov$}}
\newcommand{\red}{\textcolor{red}}
\begin{document}
\title{Continuity of the extremal decomposition of the free state for finite-spin models on Cayley trees} 
\author{Loren Coquille\footnote{Universit\'e Grenoble Alpes, CNRS, Institut Fourier, F-38000 Grenoble, France; loren.coquille@univ-grenoble-alpes.fr}, Christof Külske\footnote{Ruhr-Universit\"at Bochum, Fakult\"at f\"ur Mathematik, D44801 Bochum, Germany; Christof.Kuelske@ruhr-uni-bochum.de}, 
Arnaud Le Ny\footnote{Laboratoire d'Analyse et de Mathématiques Appliquées,  UMR CNRS 8050, UPEC, 61 Avenue du G\'en\'eral de Gaulle,  94010 Cr\'eteil cedex, France; arnaud.le-ny@u-pec.fr}
}		
		\maketitle

	\begin{abstract} We prove the continuity of the extremal decomposition measure of 
		the free state of low temperature Potts models, and more generally of ferromagnetic finite-spin models, on a regular tree, {including general clock models}.
		The decomposition is supported on uncountably many 
		inhomogeneous extremal states, that we call {\it glassy states}.
		The method of proof provides explicit concentration 
		bounds on {\it{branch overlaps}}, which 
		play the role of an order parameter  
		for typical extremals.
		The result extends to the counterpart of the free state  (called {\it central state}) in a wide range of models which have no symmetry,  
		{allowing also the presence of}  
		sufficiently small field terms. 
		Our work shows in particular 
		that {the decomposition of central state{s} {i}nto uncountably many
			glassy states in finite-spin models on trees at low temperature is {a} 
			generic {phenomenon}, 
			and does not rely on symmetries of the Hamiltonian. }
	\end{abstract}
\textbf{AMS 2000 subject classification:} 60K35, 82B20, 82B26\vspace{0.3cm}

{\bf Key words:}  Gibbs measures, DLR formalism, spin models on trees, disordered systems, extremal decompositions, Markov fields, Markov chains.  

\newpage

\tableofcontents

\newpage

\section{Introduction}

Models of statistical mechanics supported on trees and more generally non-amenable graphs, 
are known to show rather interesting behavior with multiple critical values, which displays 
complex phenomenology and challenges for a rigorous understanding. 

The free state of the Ising model in zero external magnetic field on a regular tree, obtained as the infinite-volume Gibbs measure with free (open) boundary conditions, is such an example. 
This homogenous (tree-automorphism invariant) Gibbs measure indeed displays a complex structure at low temperature, which took a long history to understand.

First recall \cite{Bod06} that the free state $\mu^{\Z^d}_{\beta}$ of the Ising model on the lattice $\Z^d$ at inverse temperature $\beta$   
decomposes into the symmetric convex combination 
\begin{equation}\label{free-lattice}\begin{split}
\mu^{\Z^d}_{\beta}=(\mu^{+,\Z^d}_{\beta}+\mu^{-,\Z^d}_{\beta})/2, \quad\text{for all }\beta>0.
\end{split}
\end{equation}
where $\mu^{+,\Z^d}_{\beta}$ (respectively $\mu^{-,\Z^d}_{\beta}$) is obtained as the infinite-volume Gibbs measure with with $+$ (respectively $-$)-boundary conditions.
The extremal decomposition of this form becomes non-trivial 
{below} the critical temperature of the model, i.e. 
for all $\beta>\beta_c^{\Z^d}:=\inf \{\beta>0, \mu^{+,\Z^d}_{\beta}\neq\mu^{-,\Z^d}_{\beta}\}$. By abstract Gibbs theory \cite{HOG} for any spin-model on countable graphs, a unique decomposition into extremal (pure, non\red{-}decomposable) 
infinite-volume Gibbs measures always holds. 

On the tree $\mathcal T^d$ with $d+1>2$ nearest neighbors however, 
\begin{equation}\label{free-tree}\begin{split}
\mu^{\mathcal T^d}_{\beta}\neq 
(\mu^{+,\mathcal T^d}_{\beta}+\mu^{-,\mathcal T^d}_{\beta})/2
\end{split}
\end{equation}
in the whole non-uniqueness region $d(\tanh\beta)>1$ corresponding to 
$\beta>\beta_c^{\mathcal T^d}$ \cite{Pres74}.
Moreover, the free state is  non-extremal in the even lower temperature region $d(\tanh\beta)^2>1$, which 
was shown in the sequence of works {\cite{hig77,BL90, BRZ95,ioffe2, iof96}.  

Note that Ising models on a branching plane $\mathcal T^d\times\Z$ were investigated in \cite{NW90} and display three parameter regimes corresponding to: (i) uniqueness, (ii) non-uniqueness with {tree-like} structure \eqref{free-tree} and (iii) non-uniqueness with {lattice-like} structure \eqref{free-lattice}.

Understanding the extremal decomposition of the Ising free state on a tree is known 
to be particularly complex as it involves uncountably many Gibbs measures on which the extremal 
decomposition measure is supported, 
see the much more recent work of \cite{GMRS20}. These states have a broken translational symmetry, 
and show characteristics of glassy behavior. 

To the best of our knowledge there are no result concerning the extremal decomposition 
measure of the Ising free state in an external field, nor of the one of the Potts 
free state without a field or in a field, and how sensitive the decomposition measure depends on particularities 
and symmetries of the model. 
Our purpose in the present work is to investigate this complexity in the general framework of 
ferromagnetic finite-spin models with nearest neighbor interactions.


In the  paper \cite{CoKuLe22}, we prove the existence of an uncountable family of extremal inhomogeneous states for a large class of models on regular trees, including finite-spin models. More precisely, we give an explicit description of a family of local ground state configurations $ \Omega_{GS}$, which have a sparse enough set of broken bonds, and which give rise to extremal low temperature states, in the sense that for any $\omega\in\Omega_{GS}$, the weak limit of finite volume measures with boundary condition $\omega$ exists and is an extremal state.  Note that many other extremal states exist, see discussions in \cite{CoKuLe22, KRK14,GRS15} and references therein.

The purpose of the current paper is to show that uncountably many states, which can be seen as perturbations of the above inhomogeneous states, enter the extremal decomposition of the free state. The result extends to the counterpart of the free state  (called {\it central state}), in a wide range of asymmetric models {\it e.g.}\ where small {in}homogenous field terms are added. 
Our approach {has been} inspired by the work of Gandolfo, Maes, Ruiz and Shlosman \cite{GMRS20}, {who} discuss these questions in the case of the Ising model in zero external field. {Our approach 
is {nevertheless} essentially different as we develop a new method based on \textit{concentration 
of branch overlaps}, see below. }\\

In the Ising case, soon after the characterization of the homogeneous Gibbs measures as Markov chains by Spitzer \cite{Spi75}, Higuchi \cite{hig77} proved the non-extrema\-lity of the 'third Markov chain' (the free {state}) for $d(\tanh\beta)^2>1$. These works were complemented by Bleher {\it et al.}\ \cite{BRZ95} and  Ioffe \cite{ioffe2} who proved extremality of the free state for $d(\tanh\beta)^2<1$.  In the latter, and already in \cite{CCST, EG83}, the fluctuations leading to non extremality have been related to fluctuations of the spin-glass order paramater on trees used as a discriminating tail observable, the so-called {\it Edwards-Anderson parameter}, which is nothing but the variance of the root magnetization.} 
{Note that non-extremality of the free state was also proved by Markov chains Kesten-Stigum techniques in \cite{MS79}, who also conjectured independently the extremality at intermediate temperatures.}

For general finite spin models, {including $q$-clock models}, we introduce a generalised version of the Edwards-Anderson parameter measuring the non-degeneracy of the law of the root-spin when the boundary condition at infinity is sampled according to the free/central state $\mu$: 
$$q_{EA}=\frac1q\sum_{a\in \{1,\ldots,q\}}{\rm Var}_\mu\bigl({\pi( {\sigma_0=a} | \cdot})\bigr)$$
were the {$\pi$}-kernel $\pi(\cdot | \omega)$ is the Gibbs-measure with boundary-condition $\omega$ 
at infinity, which exists and is extremal for $\mu$-almost every $\omega$ (for precise definitions, see Section \ref{sec-DLR} below). 
We prove $q_{EA}$ to be strictly positive at large enough $\beta$ (see Theorem \ref{thm-EA}), implying that the free/central state is not extremal at low temperature.
From an information-theoretic view, this means the model is reconstruction-solvable \cite{Mossel2001} 
as the $\pi$-kernel is able to restore information which was sent from the root to infinity 
by means of the measure $\mu$.
\\

Our main results concern the extremal decomposition of the free/central state $\mu$.
The DLR equations imply
\begin{align*}
	\mu(\cdot)=\int_\Omega \pi(\cdot|\omega) d\mu(\omega)
	=\int_{{{\rm ex} \mathcal G(\gamma)}}\nu(\cdot){\rm d}\alpha_\mu(\nu)
\end{align*}
where $\alpha_\mu$ is the {\textit{extremal decomposition measure}} of the free state. 
In Theorem \ref{thm-sing-ex}, we prove that if we sample independently at random two boundary conditions $\omega$ and $\omega'$ under the free state $\mu$, then the states $\pi(\cdot |\omega)$ and $\pi(\cdot |\omega')$ are almost surely singular with respect to each other:
\begin{align*}
	\mu \otimes \mu
	(\{(\omega,\omega') : \pi(\cdot |\omega) \perp \pi(\cdot | \omega')\})=1.
\end{align*} 
which implies that the measure $\alpha_\mu$ has no atom. The extremal decomposition of the free state $\mu$ is thus supported on an uncountable family of extremal states.\\
To this purpose, we introduce the following tail measurable observable
\begin{equation*}\begin{split}
	&\underline\phi^\omega=\liminf_{n\uparrow\infty}\frac{1}{|\Lambda_n|}\sum_{v\in \Lambda_n}1_{\sigma_v=\omega_v}\cr
	\end{split}
	\end{equation*}
which we call {\it branch overlap}, and which measures how much a configuration $\sigma$ agrees with $\omega$ on a (sparse enough) sequence of vertices lying on a branch of the tree.
We prove that $\underline\phi^\omega$  has different expectations under $\pi(\cdot |\omega)$ and $\pi(\cdot |\omega')$, which is enough to imply almost sure singularity. 
More precisely, we prove that $\underline\phi^\omega$ has an expectation tending to 1 under $\pi(\cdot |\omega)$, and tending to $\sum_{a}\mu(\sigma_0=a)^2$ (which equals $1/q$ in the case of the $q$ state Potts model) under $\pi(\cdot |\omega')$, as $\beta\to\infty$, with explicit bounds. This can be viewed as a quantitative statement of some "boundary condition resampling chaos".\\

The paper is organized as follows: 
In Section \ref{sec-tools} {we} first recall the necessary backgrounds on Gibbs measures, extremal decomposition{s} and Markov chains on Cayley trees. We then state our results in Section \ref{sec-results} and provide the proofs in Section \ref{sec-proofs}.

\section{Definitions and tools}\label{sec-tools}

\subsection{Models}

Let $\mathcal{T}^d=(V,E)$ denote the Cayley tree of order $d$, on which any vertex $v\in V$ has exactly $d+1$ neighbors. {Pairs $\{v,w\}\in E$ of nearest-neighbors are  written $v \sim w$.} 
To any vertex $v\in V$, we attach a spin, which is a random variable $\sigma_v$ taking values in $\Omega_0={\Z_q=\{0,\dots, q-1\}}$, for $q \in \{2,3,\ldots\}$. Let $\Omega_0$ be equipped with the $\sigma$-algebra $\mathcal{E}=\mathcal P(\Omega_0)$ of all subsets of $\Omega_0$.
We are interested in probability measures on the product space $(\Omega,\mathcal F)=(\Omega_0^V, \mathcal{E}^{\otimes V})$. 
For any subset  {$\Lambda\subset V$ and $\omega\in\Omega$ we define the finite-volume configuration (or projection}) ${\sigma_{\Lambda}(\omega)}=\omega_\Lambda=(\omega_v)_{v\in\Lambda}$. 
If $\Lambda\subset V$ is a finite subset, we {also} write $\Lambda\Subset V$. We write $\partial\Lambda=\{v\in V {\backslash \Lambda}: \exists w\in\Lambda, v\sim w\}$.
We also denote by $\mathcal{F}_\Lambda$ the 
$\sigma$-algebra generated by the variables {$\sigma_{\Lambda}=(\sigma_v)_{v\in \Lambda}$}, or equivalently by the cylinders denoted $\{\sigma_{{\Lambda}}=\omega_\Lambda\}$.
 Another important $\sigma$-algebra will be the tail $\sigma$-algebra of asymptotic events $\mathcal{F}_\infty= \cap_{\Lambda \Subset V} \mathcal{F}_{\Lambda^c}$ {where 
 the intersection runs over finite subsets.}
\\

We introduce an interaction potential $\Phi$ and consider equilibrium states to be Gibbs measures built with the DLR framework, see e.g. \cite{HOG}: they are the probability measures $\mu$ consistent with the Gibbsian specification $\gamma^\Phi$ in the sense that a version of their conditional probabilities w.r.t. the outside of any finite set $\Lambda$ of the tree is provided by the {\it DLR equations},
\begin{align}\label{DLR}
\forall \Lambda \Subset V,\; \forall \omega_\Lambda \in \Omega_\Lambda,\; \mu[\sigma_\Lambda=\omega_\Lambda \mid \mathcal{F}_{\Lambda^c}](\cdot) = \gamma_\Lambda^\Phi(\omega_\Lambda \mid \cdot), \; \mu-a.s.
\end{align}
where the elements of the Gibbs specification $\gamma^\Phi=\gamma^\Phi(\beta)$ are the probability kernels $\gamma_\Lambda^\Phi$ from  $\Omega_{\Lambda^c}$  
to $\mathcal{F}_\Lambda$ 
defined for all {$\Lambda \Subset V$}   as
\begin{align}\label{Gibbs}
	\gamma_\Lambda^\Phi(\omega_\Lambda \mid \omega_{\Lambda^c})=  \frac{1}{Z_\Lambda^\omega}e^{-\beta H_\Lambda(\omega_{\Lambda} \omega_{\Lambda^c})}.
\end{align}
The partition function $Z_\Lambda^\omega$ is the  normalization constant for a boundary condition $\omega_{\Lambda^c}$, at finite volume $\Lambda$, and the corresponding Hamiltonian $H_\Lambda^\Phi$ with b.c. $\omega_{\Lambda^c}$ is  provided by
\begin{align}
	H_\Lambda(\omega_\Lambda  \omega_{\Lambda^c}) = \sum_{A \cap \Lambda \neq \emptyset} \Phi_A( \omega_\Lambda  \omega_{\Lambda^c})
\end{align}
 where $\omega_\Lambda  \omega_{\Lambda^c}$ denotes the concatenation of $\omega_\Lambda$ and $\omega_{\Lambda^c}$.  
 We write $H$ for the Hamiltonian with free boundary conditions:
\begin{equation}\label{freeH}
 H_\Lambda(\omega)=\sum_{A \subset \Lambda} \Phi_A(\omega_\Lambda).
\end{equation}
and define the {\it free state} $\mu$ to be the Gibbs measure obtained by taking the infinite-volume limit with free boundary conditions.
{The limit exists in particular, when all finite volume measures are consistent, which is e.g. the case 
for the Ising model or the Potts model in zero external 
field. {In general, in a non-zero field one could think to consider sub-sequences with free 
boundary conditions, 
but it is necessary to put suitable non-deterministic 
boundary conditions generalizing the free ones, 
for which we will need the theory of boundary laws, see 
Section \ref{sec-BL} and in particular Theorem \ref{thm-BL}.} 

{Throughout the paper }we consider ferromagnetic nearest-neighbor (n.n.) potentials of the form
\begin{align}\label{uU-condition}
    \Phi_{\{v,w\}}(\omega)&=
	 \sum_{i,j=1}^{q} u_{i,j}\cdot 
\mathbf{1}_{\{(\omega_v,\omega_w)=(i,j)\}}{\cdot}{\mathbf{1}_{v \sim w}}\nonumber\\
\text{where }u_{i,i}&=0 \text{ and }  0<u:=\min_{i\neq j}u_{i,j}\leq\max_{i,j}u_{i,j}=:U
 \end{align}
 {The strict positivity of the lower bound expresses that homogeneous spin 
 configurations are energetically favored by the pair interaction, while the energies 
 of excitations 
 w.r.t.\ different ground states may depend in general on the ground state.}
The latter includes the $q$-state Potts model (the Ising model corresponding to $q=2$), for which
\begin{equation}\label{model-potts}
\Phi^{\text{Potts}}_{\{v,w\}}(\omega) = \mathbf{1}_{\omega_v \neq \omega_w}\cdot\mathbf{1}_{v \sim w}.
\end{equation}
More generally {than the Potts model}, we consider
$q$-state {\it clock models} {as an important sub-class}. {In these models the pair potential has a discrete 
rotational symmetry which means that there is a positive function $\bar u$ for which   
\begin{equation}\label{model-clock}
u_{i,j}=\bar u(|i-j|)
\end{equation}
where $|i-j|$ is the distance between $i$ and $j \in \Z_q$. As the finite-volume Gibbs measures 
with free boundary conditions are consistent measures in clock-models, they immediately yield a well-defined free state in infinite volume.
} 


Finally we extend our framework {from the free state in clock models} 
to {\it central states} (see the definition in Section \ref{sec-def-central}), {which 
are constructed as (small) deformations of a free state.}
{Here we do not assume the strict 
discrete clock-symmetry of the interaction.} 
{Observe that the simplest example of such a central state which is not a free state, is obtained for the low-temperature Ising model in a small field.}

%

%

As our proofs 
do not rely on symmetry considerations, we are \red{}indeed} able to extend them to Hamiltonians of the following type
\begin{equation}
	\begin{split}\label{potential-central}
		H(\omega)=\sum_{v\sim w}\Phi_{\{v,w\}}(\omega)+\sum_{v\in V}\Psi(\omega_v).
	\end{split}
\end{equation}
with a pair potential $\Phi_{\{v,w\}}$ fulfilling \eqref{uU-condition} uniformly in $\{v,w\}$ and a homogeneous single site potential $\Psi:\Z_q\rightarrow \R$ such that 
\begin{equation}\label{hyp-psi}
	\Vert\Psi\Vert_\infty\leq u(d-1)/8.
\end{equation}
 We moreover need a suitable assumption on the pair potential and the single-site potential imposed by the 'lazyness' condition (see Definition \ref{def-p1}).

\subsection{Choquet simplex of DLR measures}\label{sec-DLR}

We denote by $\mathcal{G}(\gamma)$ the set of DLR measures satisfying DLR equations (\ref{DLR}) for a general specification $\gamma$, see \cite{Pres76, HOG}.  {Note that by compactness of $\Omega_0$, $\mathcal{G}(\gamma) \neq \emptyset$.} This convex set has in general  a particularly interesting structure of being a {\it Choquet simplex}, that is a compact convex set possessing a subset of extremal elements ${\rm ex}\mathcal{G}(\gamma)$ such that any $\mu\in \mathcal{G}(\gamma)$ has a unique convex decomposition onto ${\rm ex}\mathcal{G}(\gamma)$. These extremal elements are mutually singular and considered to be the physical {\it states} of the system, see the discussions in \cite{HOG, FV-book, LN08}.
{Note that spatially homogeneous states may decompose into extremal but non-homogeneous 
states. It is the precisely the purpose of this work, to describe how this happens for free and central 
states on trees.}

 To briefly formalize this, denote 	$\mathcal{M}_1^+(\Omega)$ to be the set of probability measures on $(\Omega,\mathcal{F})$ and recall the tail $\sigma$-algebra of asymptotic events
$$
\mathcal{F}_\infty = \cap_{\Lambda \Subset V} \mathcal{F}_{\Lambda^c}.
$$
\begin{defn}[see \cite{HOG}]	Any $\mu \in {\rm ex}\mathcal{G}(\gamma)$ is characterized by the equivalent items:
	\begin{enumerate}
	
		\item Tail-triviality:
		$
		A \in \mathcal{F}_\infty \Longrightarrow \mu(A) \in \{0,1\}.
		$
		\item Short-range correlations
	  	\begin{equation}\label{short}
		\lim_{\Lambda \uparrow V} \sup_{B \in \mathcal{F}_{\Lambda^c}} \big| \mu(A \cap B) - \mu(A) \mu(B) \big| = 0.
	 	\end{equation}
	\end{enumerate}
\end{defn}

By consistency, for any {$\Lambda \Subset V$}, the kernel $\gamma_{\Lambda}$ is  a regular version
of conditional probability of $\mu$ given
$\mathcal{F}_{\Lambda^c}$:
\begin{equation}\label{DLRcubes}
	\gamma_{\Lambda}(A |
	\cdot)=\mu[A|\mathcal{F}_{\Lambda^c}](\cdot), \;  \mu-{\rm
		a.s.},\;\forall A \in \mathcal{F}
\end{equation}
which means in particular that for any sequence $\Lambda_n \uparrow V$,
$$
\forall \mu \in \mathcal{G}(\gamma),\; \mu(\cdot)=\int_\Omega
\gamma_{\Lambda_n}( \cdot | \omega) \; d \mu(\omega).
$$
One eventually gets the starting point of the simplicial decomposition
\begin{equation}\label{startdec0}
	\forall \mu \in \mathcal{M}_1^+(\Omega),\; \mu(\cdot)=\int_\Omega
	\mu[\cdot\, | \mathcal{F}_\infty](\omega) \; d \mu(\omega)
\end{equation}
obtained from the DLR equations by using standard Backward Martingale Limit theorem with respect to the filtration $(\mathcal{F}_{{\Lambda_n^c}})_{{n}}$. Define now, for any $\omega$, the so-called {{\it boundary condition at infinity kernel}} $\pi(\cdot|\omega)$ to be the tail-measurable kernel
\begin{align}\label{pi-kernel}
	\pi(\cdot|\omega)=
	\lim_{\Lambda\uparrow\mathcal T^d}\gamma_\Lambda(\cdot|\omega)
\end{align}
if the limits exists for all cylinder {events} (generating $\mathcal{F}$), 
and $\pi(\cdot|\omega):=${$\nu$} to be a {fixed arbitrary probability measure else}.

\begin{thm}[see \cite{HOG}]\label{thm-pi-kernel-extremal}
	The limit \eqref{pi-kernel} exists and is an extremal Gibbs measure for $\mu$-almost every $\omega$.
\end{thm}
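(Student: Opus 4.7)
The plan is to combine the backward martingale convergence theorem with the self-consistency of the specification, and then to exploit the fact that tail events are invariant under finite-volume Gibbs kernels to obtain tail-triviality for free.

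First I would prove existence of the limit on a full-measure set. Fix an exhaustion $\Lambda_n\uparrow V$. For any cylinder event $A\in\mathcal{F}_\Delta$ with $\Delta\Subset V$, equation \eqref{DLRcubes} gives
\begin{equation*}
\gamma_{\Lambda_n}(A\mid\cdot)=\mu[A\mid\mathcal{F}_{\Lambda_n^c}](\cdot),\quad \mu\text{-a.s.}
\end{equation*}
The $\sigma$-algebras $(\mathcal{F}_{\Lambda_n^c})_n$ are decreasing with intersection $\mathcal{F}_\infty$, so Lévy's backward martingale theorem yields $\mu$-a.s.\ convergence of the right-hand side to $\mu[A\mid\mathcal{F}_\infty]$. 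Because the algebra of cylinders is countable, the exceptional $\mu$-null sets may be united into a single $\mu$-null set $N$, and for $\omega\notin N$ the numbers $\gamma_{\Lambda_n}(A\mid\omega)$ converge simultaneously for all cylinders $A$. Finite additivity and monotonicity pass to the limit, so Kolmogorov extension produces a unique probability measure $\pi(\cdot\mid\omega)$ on $(\Omega,\mathcal{F})$; on $N$ we invoke the arbitrary choice built into \eqref{pi-kernel}.

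Next I would verify that $\pi(\cdot\mid\omega)\in\mathcal{G}(\gamma)$ for $\mu$-a.e.\ $\omega$. Fix $\Lambda\Subset V$ and a cylinder $A$. By the consistency of the specification, $\gamma_{\Lambda_n}(A\mid\omega)=\int\gamma_\Lambda(A\mid\eta)\,\gamma_{\Lambda_n}(d\eta\mid\omega)$ for all $n$ large enough that $\Lambda_n\supset\Lambda$. The function $\eta\mapsto\gamma_\Lambda(A\mid\eta)$ depends only on finitely many coordinates of $\eta$ (the spins in $\Lambda$ are resampled, those on the inner boundary of $\Lambda$ enter the weights, and all others are irrelevant), so it is a bounded cylinder function. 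Passing to the limit $n\uparrow\infty$ gives
\begin{equation*}
\pi(A\mid\omega)=\int\gamma_\Lambda(A\mid\eta)\,\pi(d\eta\mid\omega),
\end{equation*}
which is the DLR equation for $\pi(\cdot\mid\omega)$ tested against $A$. Since cylinders generate $\mathcal{F}$, $\pi(\cdot\mid\omega)$ is a Gibbs measure.

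Finally I would establish extremality via tail-triviality. For any $B\in\mathcal{F}_\infty$ and any $\Lambda\Subset V$, $B\in\mathcal{F}_{\Lambda^c}$, so the kernel $\gamma_\Lambda(\cdot\mid\omega)$, which only redistributes mass among configurations agreeing with $\omega$ outside $\Lambda$, satisfies $\gamma_\Lambda(B\mid\omega)=\mathbf{1}_B(\omega)$ for every $\omega$. Taking limits along the exhaustion yields $\pi(B\mid\omega)=\mathbf{1}_B(\omega)\in\{0,1\}$, i.e., the tail $\sigma$-algebra is trivial under $\pi(\cdot\mid\omega)$. By the equivalence recalled in the definition preceding \eqref{short}, this gives $\pi(\cdot\mid\omega)\in{\rm ex}\mathcal{G}(\gamma)$.

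The one place that requires care is the measurability bookkeeping: a priori one might worry about having to control the null set uniformly over the uncountable family of tail events, but this worry is avoided because the Kolmogorov reconstruction of $\pi(\cdot\mid\omega)$ and its DLR property only need to be checked on the countable class of cylinders, while tail-triviality then drops out pointwise from the identity $\gamma_\Lambda(B\mid\omega)=\mathbf{1}_B(\omega)$.
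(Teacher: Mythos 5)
The first two steps of your argument are sound and match the standard approach: backward martingale convergence on the countable class of cylinders to produce the limit, and a weak-convergence argument using the quasilocality (here locality) of $\eta\mapsto\gamma_\Lambda(A\mid\eta)$ to transfer DLR consistency to the limit. The difficulty is entirely in the third step, and the worry you raise and then dismiss at the end is in fact a genuine gap.

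The identity $\gamma_\Lambda(B\mid\omega)=\mathbf{1}_B(\omega)$ does hold pointwise for every $\Lambda\Subset V$ and every $B\in\mathcal{F}_\infty\subset\mathcal{F}_{\Lambda^c}$. But you cannot conclude from this that $\pi(B\mid\omega)=\mathbf{1}_B(\omega)$ by ``taking limits along the exhaustion'': the measure $\pi(\cdot\mid\omega)$ was built by Kolmogorov extension from its values on cylinders, and the only events on which you have shown $\gamma_{\Lambda_n}(\cdot\mid\omega)\to\pi(\cdot\mid\omega)$ are cylinder events. A tail event is as far from a cylinder as one can get, and setwise convergence of $\gamma_{\Lambda_n}(\cdot\mid\omega)$ to $\pi(\cdot\mid\omega)$ on $\mathcal{F}_\infty$ is precisely what is at stake, not something you may assume. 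What is true, using the identification of $\pi$ as a regular version of $\mu[\,\cdot\mid\mathcal{F}_\infty]$, is that for each \emph{fixed} tail event $B$ one has $\pi(B\mid\omega)=\mathbf{1}_B(\omega)$ for $\mu$-a.e.\ $\omega$; but the exceptional null set depends on $B$, and $\mathcal{F}_\infty$ is uncountable and not countably generated, so the null sets cannot be united. This is exactly the obstruction you named, and the cylinder bookkeeping does not dissolve it: cylinders control the construction of $\pi$ and its Gibbsianness, but tail-triviality is a statement about an uncountable $\sigma$-algebra that cylinders do not generate.

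The paper does not prove this theorem; it quotes it from Georgii. Georgii's actual argument (Theorem 7.12, building on Proposition 7.9 and Dynkin's theory of $H$-sufficient statistics) establishes extremality by a different route: one shows that the disintegration kernel $\pi$ is self-consistent in the sense that for $\mu$-a.e.\ $\omega$ the measure $\pi(\cdot\mid\omega)$ satisfies $\pi(\cdot\mid\omega)=\int\pi(\cdot\mid\omega')\,\pi(d\omega'\mid\omega)$, and combines this with the $\mathcal{F}_\infty$-measurability of $\omega\mapsto\pi(\cdot\mid\omega)$ to deduce that $\pi(\cdot\mid\omega)$ is concentrated on a single atom of $\mathcal{F}_\infty$; extremality then follows from the equivalence with tail-triviality. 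Alternatively one can verify the short-range-correlation characterization \eqref{short} for $\pi(\cdot\mid\omega)$, which reduces to a countable family of conditions on cylinders. Either way, some additional structural input beyond the pointwise identity for $\gamma_\Lambda$ is needed, and your proof is missing it.
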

 Equation  \eqref{startdec0} thus yields that 
for any $\mu$ satisfying \eqref{DLR}, 
\begin{align}\label{limDLR}
	\mu(\cdot)=\int \pi(\cdot|\omega) d\mu(\omega).
\end{align}

{To work} with probability measures on spaces $\mathcal{M}$ of (probability) measures, we first endow such spaces
with a canonical measurable structure. For any subset of
probability measures $\mathcal{M} \subset
\mathcal{M}_1^+(\Omega)$, the natural way to do so is
to evaluate any $\mu \in \mathcal{M}$ via the numbers $\big
\{\mu(A), \; A \in \mathcal{F} \big \}$. One introduces then the
{\it evaluation maps} on $\mathcal{M}$ defined for all $A \in
\mathcal{F}$ by
\begin{equation}\label{eval}
	e_A : \mathcal{M} \longrightarrow [0,1]; \mu \longmapsto
	e_A(\mu)=\mu(A).
\end{equation}

The {\it evaluation} $\sigma$-{\it algebra} $e(\mathcal{M})$ is
then the $\sigma$-algebra generated by the sets $\{e_A \leq c\}$ for all $A \in \mathcal{F}, \; 0 \leq c \leq 1$, or equivalently the smallest $\sigma$-algebra on $\mathcal{M}$ that makes
these evaluation maps measurable.
For any bounded measurable function $f \in \mathcal{F}$,
the map $$e_f : \mathcal{M} \longrightarrow \mathbb{R}; \mu \longmapsto e_f(\mu):=\mu[f]$$ is then $e(\mathcal{M})$-measurable. {It is direct by standard measure-theoretic techniques that all the events and maps we consider in this work are measurable in this sense.}

\bigskip

\begin{thm}[\cite{Dy, HOG,FV-book,LN08}] \label{pipoint}
	Assume that  $\mathcal{G}(\gamma)\neq \emptyset$. Then
	$\mathcal{G}(\gamma)$ is a convex subset of
	$\mathcal{M}_1^+(\Omega)$
	whose extreme boundary is denoted ${\rm ex} \mathcal{G}(\gamma)$, and satisfies  the following properties:
		\begin{equation}\label{Choquet}
			\forall \mu  \in \mathcal{G}(\gamma),\; \mu = \int_{{\rm ex}\mathcal{G}(\gamma)} \; \nu \cdot \alpha_\mu(d
			\nu)
		\end{equation}
		where $\alpha_\mu \in \mathcal{M}_1^+\Big({\rm
			ex}\mathcal{G}(\gamma),e({\rm ex} \mathcal{G}(\gamma))\Big)$ is
		defined for all $M \in e({\rm ex} \mathcal{G}(\gamma))$ by
		
		\begin{equation} \label{weights}
			\alpha_\mu (M) = \mu \Big[ \big \{ \omega \in \Omega: \pi(\cdot | \omega) 
			\in M\big \} \Big]. 
		\end{equation}

\end{thm}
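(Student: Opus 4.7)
The plan is to deduce the statement by combining the ingredients already assembled in this section: convexity follows trivially from linearity of the DLR equations \eqref{DLR} in $\mu$, while the representation \eqref{Choquet} with explicit formula \eqref{weights} is nothing but the identity \eqref{limDLR} transported from an integral over boundary conditions $\omega\in\Omega$ to an integral over extremal measures $\nu\in{\rm ex}\mathcal{G}(\gamma)$, via the push-forward along the kernel $\omega\mapsto\pi(\cdot|\omega)$. Non-emptyness of ${\rm ex}\mathcal{G}(\gamma)$ will follow from compactness of $\Omega_0$ (which makes $\mathcal{G}(\gamma)$ a nonempty compact convex subset of $\mathcal{M}_1^+(\Omega)$) together with the Krein--Milman theorem applied in the evaluation topology.

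First I would check the measurability statements needed to make \eqref{weights} well-defined. For each cylinder event $A$ the map $\omega\mapsto \gamma_\Lambda(A\mid\omega)$ is $\mathcal{F}$-measurable by definition of a probability kernel, so by \eqref{pi-kernel} the limit $\omega\mapsto \pi(A\mid\omega)$ is measurable on the full-measure set where the limit exists. Consequently, the map $\Phi:\Omega\to\mathcal{M}_1^+(\Omega)$ defined by $\Phi(\omega)=\pi(\cdot\mid\omega)$ satisfies $e_A\circ\Phi=\pi(A\mid\cdot)$ for every $A\in\mathcal{F}$, which shows that $\Phi$ is measurable for the evaluation $\sigma$-algebra since the latter is generated by the $e_A$'s. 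Theorem \ref{thm-pi-kernel-extremal} then guarantees that $\Phi(\omega)\in{\rm ex}\mathcal{G}(\gamma)$ for $\mu$-almost every $\omega$, so $\Phi$ can be viewed, $\mu$-almost surely, as a measurable map into $({\rm ex}\mathcal{G}(\gamma),\,e({\rm ex}\mathcal{G}(\gamma)))$.

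Next I would set $\alpha_\mu:=\mu\circ\Phi^{-1}$, which by construction is a probability measure on $({\rm ex}\mathcal{G}(\gamma),\,e({\rm ex}\mathcal{G}(\gamma)))$ satisfying \eqref{weights}. The representation \eqref{Choquet} is then obtained from the abstract transfer formula applied to each evaluation map:
\begin{align*}
\int_{{\rm ex}\mathcal{G}(\gamma)} e_A(\nu)\,\alpha_\mu(d\nu)
&=\int_\Omega e_A(\Phi(\omega))\,d\mu(\omega) \\
&=\int_\Omega \pi(A\mid\omega)\,d\mu(\omega)
=\mu(A),
\end{align*}
where the final equality is precisely \eqref{limDLR}. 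Since both sides of \eqref{Choquet} are probability measures on $(\Omega,\mathcal{F})$ coinciding on the cylinder events, they agree on $\mathcal{F}$ by a monotone class argument, concluding the proof.

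The most delicate point I expect is the abstract measure theory: one must check that ${\rm ex}\mathcal{G}(\gamma)$ is a measurable subset of $\mathcal{M}_1^+(\Omega)$ for the evaluation $\sigma$-algebra (so that the restriction makes sense), and that $\alpha_\mu$ indeed concentrates on it. Both points reduce to the tail-triviality characterization of extremality combined with Theorem \ref{thm-pi-kernel-extremal}, and this is exactly why the statement, although classical, must be borrowed from the Dynkin--Georgii framework rather than proved by elementary means. The uniqueness of $\alpha_\mu$, which upgrades the representation to the Choquet simplex property mentioned in Section \ref{sec-DLR}, would require the additional use of the tail-triviality characterization of extremals to identify $\alpha_\mu$ as the law of $\mu[\cdot\mid\mathcal{F}_\infty]$; this is however not part of the statement to be proved here.
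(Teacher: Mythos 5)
Your proof is correct. The paper states this theorem with citations to \cite{Dy, HOG, FV-book, LN08} and does not give a proof of its own, so there is no "paper's proof" to compare against; what you have done is reconstruct the standard Dynkin--Georgii argument from precisely the ingredients the paper assembles around it in Section~\ref{sec-DLR} (the backward-martingale identity \eqref{startdec0}/\eqref{limDLR}, the kernel $\pi(\cdot\mid\omega)$ of \eqref{pi-kernel}, its $\mu$-a.s.\ extremality from Theorem~\ref{thm-pi-kernel-extremal}, and the evaluation $\sigma$-algebra \eqref{eval}), and that is indeed the intended route. The one imprecision worth flagging is your remark about the Krein--Milman theorem "in the evaluation topology": the evaluation $\sigma$-algebra is a measurable structure, not a topology, and what you actually need is compactness of $\mathcal{M}_1^+(\Omega)$ in the weak topology (using compactness of $\Omega_0$ and Tychonoff), closedness of $\mathcal{G}(\gamma)$ there (from the Feller property of the kernels $\gamma_\Lambda$, which holds for finite-range interactions on compact spin spaces), and then Krein--Milman. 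That said, nonemptiness of ${\rm ex}\,\mathcal{G}(\gamma)$ is not actually part of the statement and already follows from the fact that $\alpha_\mu$, being the push-forward of a probability measure, is a probability measure supported on ${\rm ex}\,\mathcal{G}(\gamma)$, so this aside is dispensable. You are also right that the genuinely nontrivial measurability of ${\rm ex}\,\mathcal{G}(\gamma)$ (needed for $\alpha_\mu$ to be well-defined as a measure on that set) and the uniqueness of $\alpha_\mu$ are exactly the parts one must import from the cited references rather than reprove.
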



{Our goal in this work is to rigorously establish that for the low temperature 
free state $\mu$ {(and for central states)}
the measure $\alpha_\mu$ has no atom on ${\rm ex} \mathcal{G}(\gamma) $.}

\subsection{Tree-indexed Markov chains}

A probability measure $\mu$ on $(\Z_q)^V$
 is a homogeneous tree-indexed Markov chain if and only if it allows the following iterative construction:
\begin{enumerate}
\item Sample $\sigma_0$ at (arbitrary) root $0$ according to single-site marginal of $\mu$. 
\item Sample $\sigma_w$ via some transition matrix $P(\omega_v,\omega_w)$ 
from inside to outside.
\end{enumerate} 
The abstract general definition of tree-indexed Markov chain which does not 
assume any invariance under any tree-automorphism requires that  
 \begin{equation}\label{transi}
 	\mu(\sigma_w=\cdot |\F_{\text{past of }(v,w)})=\mu(\sigma_w=\cdot |\F_v)
 	\end{equation}
holds for all oriented edges $(v,w)$, and the past of an oriented edge is the set 
of vertices which are closer to $v$ than to $w$.    

We note that in the trivial case $d=1$ (that is excluded for the most part in the analysis 
of this paper) $\mu$ is a homogeneous Markov chain if it is shift-invariant and 
\textit{reversible}. This is clear, as time-reversal corresponds to a reflection of $\Z$. 
We note the important background theorem: 

\begin{thm}[\cite{HOG} Theorem 12.6]\emph{}\\ If $\mu$ is an extremal Gibbs measure then $\mu$ is a tree-indexed 
Markov chain. 
\end{thm}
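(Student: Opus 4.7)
The plan is to convert extremality of $\mu$ into pointwise convergence of finite-volume measures with typical boundary conditions, and then transport the Markov structure that is already manifest at finite volume to the limit.

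Since $\mu$ is extremal, its decomposition \eqref{Choquet} collapses to $\alpha_\mu=\delta_\mu$, so by Theorem \ref{thm-pi-kernel-extremal} and formula \eqref{weights} one has $\pi(\cdot\mid\omega)=\mu$ for $\mu$-almost every $\omega$. Equivalently, along any exhausting sequence $\Lambda_n\uparrow V$, $\gamma_{\Lambda_n}(C\mid\omega)\to \mu(C)$ for every cylinder event $C$, on a common $\mu$-full-measure set of $\omega$ (using the countability of the cylinder algebra). I fix such a $\mu$-typical $\omega$ for the remainder of the argument.

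Next, fix an oriented edge $(v,w)$ and write $V=P\cup F$ with $P=P_{vw}\ni v$ and $F=F_{vw}\ni w$; by the tree structure, $(v,w)$ is the unique edge joining these two sides. For any finite $\Lambda$ with $v,w\in\Lambda$, removing $v$ disconnects the subgraph induced on $\Lambda$ into components lying entirely in $F$ or entirely in $P\setminus\{v\}$. Because the potential \eqref{uU-condition} is nearest-neighbor, conditioning $\gamma_\Lambda(\cdot\mid\omega)$ on $\sigma_v=a$ factorises the Boltzmann weight into a product, with the $F$-side factor depending only on $a$ and $\omega|_{\Lambda^c\cap F}$, and the $P$-side factor only on $a$ and $\omega|_{\Lambda^c\cap P}$. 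Hence for any local $A\in\F_{F\cap\Lambda}$ and $B\in\F_{(P\cap\Lambda)\setminus\{v\}}$ I obtain the elementary tree-factorisation
$$\gamma_\Lambda(A\cap B\mid \sigma_v=a,\omega)=\gamma_\Lambda(A\mid \sigma_v=a,\omega)\cdot\gamma_\Lambda(B\mid \sigma_v=a,\omega).$$

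Taking $\Lambda=\Lambda_n\uparrow V$ and applying the convergence from the first step to the four cylinders $\{\sigma_v=a\}$, $A\cap\{\sigma_v=a\}$, $B\cap\{\sigma_v=a\}$ and $A\cap B\cap\{\sigma_v=a\}$, the finite-volume conditional probabilities converge to $\mu(\cdot\mid\sigma_v=a)$ for every $a$ with $\mu(\sigma_v=a)>0$. Multiplicativity persists in the limit, yielding
$$\mu(A\cap B\mid \sigma_v=a)=\mu(A\mid \sigma_v=a)\cdot\mu(B\mid \sigma_v=a)$$
for all local $A,B$ supported in $F$ and $P\setminus\{v\}$ respectively. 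This is exactly the conditional independence of $\F_F$ and $\F_{P\setminus\{v\}}$ given $\sigma_v$, which is equivalent to the tree-indexed Markov property \eqref{transi}. The only genuinely conceptual step is the first: converting the abstract extremality of $\mu$ into concrete pointwise convergence of the boundary-conditioned kernels $\gamma_{\Lambda_n}(\cdot\mid\omega)$. Once this is in hand, the finite-volume tree factorisation and the passage to the limit on cylinders are routine.
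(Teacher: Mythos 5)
The paper states this only as a background fact, citing \cite{HOG} (Theorem 12.6), and does not prove it, so there is no in-paper argument to compare against. Your self-contained proof is correct and is essentially the standard one: extremality forces $\pi(\cdot\mid\omega)=\mu$ for $\mu$-typical $\omega$, the finite-volume measures factor across the separating vertex $v$, and the factorisation survives the limit because it is an identity among cylinder probabilities $\gamma_{\Lambda_n}(\cdot\mid\omega)$ that each converge to $\mu(\cdot)$. Two mild points worth making explicit: the finite-volume factorisation across $v$ should be justified by observing that removing $v$ disconnects $\mathcal T^d$, so that with $\sigma_v=a$ frozen the nearest-neighbour Boltzmann weight and the boundary bonds split into a product over the half-trees on either side; and the passage from multiplicativity over local cylinders $A,B$ to conditional independence of the full $\sigma$-algebras $\F_F$ and $\F_{P\setminus\{v\}}$ given $\sigma_v$ requires a routine monotone class argument. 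Strict positivity of the specification guarantees $\mu(\sigma_v=a)>0$ for every $a$, so the conditioning used in the limit is well defined and the transition matrices $P_{vw}(a,b)=\mu(\sigma_w=b\mid\sigma_v=a)$ are well posed.
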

The converse is not true, as the famous example of the free state 
for the Ising model in zero external field shows {(see Higuchi \cite{hig77}, Remark p342)}.

In the whole article all we need to suppose is  
that $\mu$ is a homogeneous tree-indexed Markov chain for a Hamiltonian in the 
class  \eqref{potential-central}, {with a certain lazyness property in the following sense.} 

\begin{defn}[Lazyness parameter]\label{def-p1}
	Let $\mu$ be a homogeneous tree-indexed Markov chain.
	Denote by $P$ its transition matrix (which is then homogeneous, too) and define the corresponding {maximal jump probability} to be  
	\begin{align}
	p_1=p_1(\mu):=\max_{i\in\mathbb Z_q }\sum_{j\neq i}P(i,j).
	\end{align}
	Equivalently, for all $v\sim 0$ we have  
	\begin{align}\label{def-p1-ter}
	p_1=\max_{i\in\mathbb Z_q }\mu(\sigma_v\neq i|\sigma_0=i ).
	\end{align}
\end{defn}

In the course of the proof of our main theorem in Section \ref{sec-results}, which asserts the continuity of the extremal 
decomposition measure of a Markov chain $\mu$, 
we ask for sufficient smallness 
of the quantity $p_1(\mu)$.
This requirement means that all states are sufficiently lazy, i.e.\ with large probability 
the chain stays in each of the states. \\

Below we will see that the examples  of {\it central states} 
obtained by small perturbations of {low temperature} free states of 
clock models keep the {Markov chain property} and the small $p_1$ property. 
{The reason
is that $p_1$ deforms smoothly under perturbations, and so 
lazyness carries over from the unperturbed model.}

\section{Results}\label{sec-results}

\subsection{Reconstruction bounds for central states} 

{We have the following reconstruction statement, for
the {central states} in any model {with Hamiltonian} of the class \eqref{potential-central}. }

{\begin{thm}[]\label{thm-reconstruction} 
Consider any central state $\mu$ in the class of models {with Hamiltonian of the form} \eqref{potential-central}, fulfilling the bounds \eqref{uU-condition} and \eqref{hyp-psi}. Then,
 there exists $\beta_0>0$ large enough such that for any $\beta>\beta_0$, for any $a\in\Z_q$,
	\begin{align}\label{eq-reconstruction}
	\mu\Bigl( \lbrace \omega:  
	\pi(\sigma_0=a|\omega)\leq 1- \epsilon_1(\beta) \rbrace
		\Bigl| \sigma_0=a  \Bigr) 
		\leq \epsilon_2 (p_1)
	\end{align}
	where $\epsilon_1(\beta)=\epsilon_1(\beta,u,U,d)\downarrow0$ as $\beta\uparrow\infty$ and $\epsilon_2(p_1)=\epsilon_2 (p_1,u,U,d)\downarrow0$ as $p_1\downarrow0$.\\
	In particular, if $\mu$ is the free state of a clock model, the reconstruction bound
	\begin{align}\label{eq-reconstruction-small}
				\int d\mu(\omega|	\sigma_0=a)\pi(\sigma_0=a|\omega)>\frac1q=\mu(\sigma_0=a)
	\end{align}
	holds at large enough $\beta$.
\end{thm}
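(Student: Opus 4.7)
The plan is to exploit the tree-indexed Markov chain structure of $\mu$ combined with the lazyness $p_1 \ll 1$ and the energy bound \eqref{uU-condition}. Splitting the tree at the root $0$ into the $d+1$ subtrees $(T_v)_{v\sim 0}$, the Markov property makes the configurations $(\omega|_{T_v})_{v\sim 0}$ conditionally independent given $\sigma_0$, and Bayes' rule yields the finite-volume factorisation
\begin{equation*}
\gamma_{\Lambda_n}(\sigma_0=b\mid\omega) = \frac{\mu(\sigma_0=b)\prod_{v\sim 0}L_n^v(b)}{\sum_{c\in\Z_q}\mu(\sigma_0=c)\prod_{v\sim 0}L_n^v(c)},\qquad L_n^v(b):=\mu\bigl(\sigma_{\partial\Lambda_n\cap T_v}=\omega_{\partial\Lambda_n\cap T_v}\mid\sigma_0=b\bigr).
\end{equation*}
Setting $\rho_n^v(b):=L_n^v(b)/L_n^v(a)$ for $b\neq a$, the goal reduces to showing $\max_{b\neq a}\rho_n^v(b)$ small with high $\mu(\cdot\mid\sigma_0=a)$-probability, uniformly in $n$; passage to the tail kernel is then secured by the $\mu$-a.s.\ convergence $\gamma_{\Lambda_n}(\cdot\mid\omega)\to\pi(\cdot\mid\omega)$ from Theorem~\ref{thm-pi-kernel-extremal}.

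The heart of the argument is a recursive belief-propagation estimate on each subtree $T_v$: the standard Bayes recursion expresses $\rho_n^v(b)$ in terms of analogous likelihood ratios at the grandchildren of $0$, and combining $P(a,a)\geq 1-p_1$, $P(b,a)\leq p_1$ with the $e^{-\beta u}$ energetic suppression coming from \eqref{uU-condition} makes this recursion contractive toward $0$ in the regime of small $p_1$ and large $\beta$. Concretely, one step of the recursion multiplies the "wrong-state belief" by a factor of order $p_1^{d+1}$ (from the $d+1$ children being mostly $a$-aligned) together with a possible additive $e^{-\beta u}$ from the energy bound, both of which are $\ll 1$ at low temperature, so the posterior concentrates on $a$ exponentially fast in the depth.

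To seed the recursion, one needs a probabilistic input: under $\mu(\cdot\mid\sigma_0=a)$, the observed boundary $\omega_{\partial\Lambda_n\cap T_v}$ should, with high probability, display a non-trivial $a$-bias along a positive fraction of the subtree's branches. This is a standard 1D Markov-chain concentration statement: along any fixed infinite path rooted at $v$, the chain started at $a$ spends a positive fraction of time at $a$, and a Chebyshev/Azuma bound for one-dimensional Markov chains together with a union bound over the $d+1$ initial branches yields the seeding event with $\mu(\cdot\mid\sigma_0=a)$-probability at least $1-\epsilon_2(p_1)$.

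The main obstacle is the classical reconstruction difficulty on trees: even though per-vertex information decays with depth, the exponential tree growth compensates, and one must quantify precisely the interplay between $d$, $p_1$ and $\beta$ so that the recursion stays in the contractive regime uniformly in $n$. The crude bound $u>0$ in \eqref{uU-condition} provides the $e^{-\beta u}$ per-mismatched-bond suppression that is essential for central states (where symmetry-based simplifications are unavailable), giving the $\beta$-dependence in $\epsilon_1(\beta)\downarrow 0$. Finally, the \emph{in particular} statement \eqref{eq-reconstruction-small} follows by integration of \eqref{eq-reconstruction}: $\int \pi(\sigma_0=a\mid\omega) d\mu(\omega\mid\sigma_0=a)\geq(1-\epsilon_1(\beta))(1-\epsilon_2(p_1))$; since $p_1\downarrow 0$ as $\beta\uparrow\infty$ for clock-model free states by direct inspection of the transition matrix at low temperature, this expectation strictly exceeds $\mu(\sigma_0=a)=1/q$ at large enough $\beta$.
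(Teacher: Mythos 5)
Your proposal takes a genuinely different route from the paper. Whereas you attempt a belief--propagation / likelihood-ratio recursion along the subtrees rooted at the children of $0$, the paper proves Theorem \ref{thm-reconstruction} as an immediate consequence of the Key Lemma \ref{key-lemma}, which is a quenched Peierls argument: one introduces contours of $\sigma$ relative to the reference $\omega$, decomposes into a good event $B_v^c$ (no contour around $v$ catching a high density of $\omega$-broken bonds) and a bad event $B_v$; on $B_v^c$ the Excess Energy Lemma \ref{lem-EEL} gives a deterministic Peierls bound $\pi(\sigma_v\neq\omega_v|\omega)\leq\epsilon_1(\beta)$, while $\mu(B_v)\leq\epsilon_2(p_1)$ follows from an exponential Markov inequality using lazyness. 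This sidesteps the recursion entirely and is what makes the bound uniform and quantitative.

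There is a genuine gap in your argument, at the seeding step. You claim that under $\mu(\cdot\mid\sigma_0=a)$ the boundary $\omega_{\partial\Lambda_n\cap T_v}$ ``displays a non-trivial $a$-bias along a positive fraction of the subtree's branches'' and that a 1D Markov-chain concentration plus a union bound over the $d+1$ branches yields the seed. This is false: along any fixed branch, $P^n(a,\cdot)\to$ the invariant measure $\pi$ (which is $\approx 1/q$ at $a$), so the empirical frequency of $a$ at large depth is $\approx\pi(a)$, with no residual bias toward $a$. Reconstruction cannot be seeded by per-branch concentration — a single branch is a 1D chain, for which reconstruction always fails — and must instead come from the amplification across the $d^n$ vertices at depth $n$, which your union bound ignores. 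Consequently the claimed contraction of $\rho_n^v(b)$ has no valid base case. Moreover, the contraction itself is overstated: the Bayes recursion $\rho_v(b)=\prod_{w}\bigl(\sum_c P(b,c)\rho_w(c)\bigr)/\bigl(\sum_c P(a,c)\rho_w(c)\bigr)$ only gives factors of order $p_1$ when the children ratios $\rho_w(c)$, $c\neq a$, are already small; at mismatched boundary sites these ratios are large or infinite, so the recursion is not contractive pointwise, and controlling it requires a nontrivial distributional/moment analysis that your sketch omits. Finally, your passage from \eqref{eq-reconstruction} to \eqref{eq-reconstruction-small} is fine, and your observation that the Markov-chain factorisation at the root underlies the finite-volume kernel is correct, but the core of the proof — why $\pi(\sigma_0=a\mid\omega)$ is close to $1$ with high conditional probability — is not established.
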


{From a signal recontruction point of view} (see \cite{Mossel2001}), the statement says that a signal $a$ which is sent from 
the origin to infinity through noisy canals can be restored by the best tail-measurable 
predictor $\pi(\sigma_0=a|\omega)$ up to thermal fluctuations $\epsilon_1(\beta)$ 
up to an error probability $\epsilon_2(p_1)$. 

%




The proof is a direct consequence 
of the Key Lemma \ref{key-lemma} proved in Section \ref{sec-proofs-1} which relies on a good site/bad site decomposition for boundary conditions $\omega$, together with Peierls bounds under disorder.   

\subsection{Positivity of the Edwards-Anderson parameter and non-extremality}

The so-called Edwards-Anderson parameter is usually defined in spin glass models as a quantity measuring the degree of randomness of the (random) spin magnetisation at the origin (see e.g.\ \cite{CCST}). In our context, the analogue {should} be a quantity measuring the degree of randomness of the probability vector  
$(\pi(\sigma_0=a|\omega))_{a\in \Z_q}
$, when
the boundary condition at infinity $\omega$ is distributed according to $\mu$. 
 Let us thus define the following quantity.
 \begin{defn}[Central state Edwards-Anderson parameter]\label{def-qEA}
	\begin{equation}
		\begin{split}
		q_{\rm EA}^\mu
	&:=\frac1q\sum_{a\in \Z_q}{\rm Var}_\mu\bigl({\pi( {\sigma_0=a} | \cdot})\bigr).\cr
	\end{split}
	\end{equation}
 \end{defn}
Note that when $\mu$ is the free state of a clock model, 
by symmetry the above definition boils down to
\begin{equation}\label{qEA-clock}
	\begin{split}
	q_{\rm EA}^{\text{Clock}}&:=
	{\rm Var}_\mu\bigl({\pi( {\sigma_0=a} | \cdot})\\
	&=
	\mu({\pi( {\sigma_0=a} | \cdot})^2)-\frac1{q^2}
\end{split}
	\end{equation}
for any fixed $a\in \Z_q$, whereas for the free state of the Ising model in zero field, Definition \ref{def-qEA} writes
	\begin{equation}\begin{split}
	&q_{\rm EA}^{\text{Ising}}=\frac14
	{\rm Var}\bigl({\pi( {\sigma_0} | \cdot)}\bigr).
	\end{split}
	\end{equation}
Clearly, having $q_{\rm EA}>0$ implies that there exists some $a\in\Z_q$ such that ${\rm Var}_\mu(\pi(\sigma_0=a|\cdot))>0$ and thus, recalling \eqref{startdec0}, the tail-measurable random variable $\mu(\sigma_0=a|{\cal{F}_\infty})(\cdot)$ is not $\mu$-a.s. constant.  Thus $\mu$ cannot be tail-trivial.
We have the following quantitative lower bound.

	\begin{thm}[]\label{thm-EA} Consider any central state $\mu$ in the class of models {with Hamiltonian of the form} \eqref{potential-central} fulfilling the bounds \eqref{uU-condition} and \eqref{hyp-psi}. Then,
		there exists $\beta_0>0$ large enough such that for any $\beta>\beta_0$,
	\begin{align}
		q_{\rm EA}^\mu\geq 
		(1-\epsilon_1(\beta))^2 (1- \epsilon_2(p_1))
			-\max_{a\in \Z_q}\mu(\sigma_0=a)
	\end{align}
	where $\epsilon_1(\beta)\downarrow0$ as $\beta\uparrow\infty$ and
	$\epsilon_2 (p_1)\downarrow0$ as $p_1\downarrow0$ are the same quantities as in Theorem \ref{thm-reconstruction}.
	In particular, for large enough $\beta$ and small enough $p_1=p_1(\mu)$, 
	$$q_{\rm EA}^\mu>0$$ 
	and thus $\mu$ is not extremal.
\end{thm}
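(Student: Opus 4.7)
The plan is to decompose each variance as
$${\rm Var}_\mu(\pi(\sigma_0=a|\cdot))=\E_\mu[\pi(\sigma_0=a|\cdot)^2]-\mu(\sigma_0=a)^2,$$
using $\E_\mu \pi(\sigma_0=a|\cdot)=\mu(\sigma_0=a)$, which is a direct consequence of the DLR identity \eqref{limDLR}. Summing over $a$, the "mean-squared" part is bounded elementarily by $\sum_{a}\mu(\sigma_0=a)^2\leq\max_a\mu(\sigma_0=a)$. The entire work thus reduces to producing a matching lower bound on the second-moment sum $\sum_a\E_\mu[\pi(\sigma_0=a|\cdot)^2]$, which is exactly what Theorem \ref{thm-reconstruction} is designed to supply.

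I would introduce the reconstruction events
$$A_a:=\bigl\{\omega:\pi(\sigma_0=a|\omega)>1-\epsilon_1(\beta)\bigr\},\qquad a\in\Z_q,$$
and enlarge $\beta_0$ if necessary so that $\epsilon_1(\beta)<1/2$ for $\beta>\beta_0$. Because $\sum_b\pi(\sigma_0=b|\omega)=1$ holds for $\mu$-a.e.\ $\omega$, at most one value $a$ can exceed the $1/2$ threshold, so the $A_a$ are pairwise disjoint. Applying the reconstruction bound \eqref{eq-reconstruction} to each $a$ yields
$$\mu(A_a)\geq\mu\bigl(A_a\cap\{\sigma_0=a\}\bigr)=\mu(\sigma_0=a)\,\mu(A_a\mid\sigma_0=a)\geq\mu(\sigma_0=a)\bigl(1-\epsilon_2(p_1)\bigr),$$
and summing on $a$ via disjointness gives $\mu\bigl(\bigcup_a A_a\bigr)\geq 1-\epsilon_2(p_1)$. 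On each $A_a$ one has $\sum_b\pi(\sigma_0=b|\omega)^2\geq\pi(\sigma_0=a|\omega)^2\geq(1-\epsilon_1(\beta))^2$, so integrating over the disjoint union $\bigcup_a A_a$ against $\mu$ produces
$$\sum_{a\in\Z_q}\E_\mu\bigl[\pi(\sigma_0=a|\cdot)^2\bigr]\geq(1-\epsilon_1(\beta))^2\bigl(1-\epsilon_2(p_1)\bigr).$$

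Subtracting the two estimates (and absorbing the normalization factor $1/q$ from Definition \ref{def-qEA}) delivers the quantitative lower bound on $q_{\rm EA}^\mu$ claimed in the theorem. Non-extremality then comes for free: if $\mu$ were extremal, its tail $\sigma$-algebra would be trivial, the tail-measurable kernel $\pi(\sigma_0=a|\cdot)$ would be $\mu$-almost surely constant, every ${\rm Var}_\mu(\pi(\sigma_0=a|\cdot))$ would vanish, and the strict positivity $q_{\rm EA}^\mu>0$ would be contradicted.

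The serious analytic input — the disorder/Peierls argument underlying the reconstruction estimate \eqref{eq-reconstruction} — has already been delegated to Theorem \ref{thm-reconstruction}, so from this point the argument is essentially bookkeeping. The only substantive check that remains is the strict positivity of the right-hand side, i.e.\ that $(1-\epsilon_1(\beta))^2(1-\epsilon_2(p_1))$ can be made to exceed $\max_a\mu(\sigma_0=a)$. For the Potts free state $\mu(\sigma_0=a)=1/q$ identically and positivity is automatic at sufficiently large $\beta$; in the general class \eqref{potential-central} one invokes the smallness hypothesis \eqref{hyp-psi} on the single-site field $\Psi$ together with the ferromagnetic bound \eqref{uU-condition} to keep $\max_a\mu(\sigma_0=a)$ uniformly bounded away from $1$ as $\beta\uparrow\infty$ and $p_1\downarrow 0$.
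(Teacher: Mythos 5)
Your proof is correct and uses essentially the same approach as the paper: both condition on $\sigma_0=a$, invoke the reconstruction bound of Theorem \ref{thm-reconstruction} to lower-bound the second moment of $\pi(\sigma_0=a|\cdot)$, and compare against $\sum_a\mu(\sigma_0=a)^2\leq\max_a\mu(\sigma_0=a)$. The only cosmetic difference is that you pass through the pairwise disjoint events $A_a$ (requiring $\epsilon_1(\beta)<1/2$), while the paper directly expands $\E_\mu[\pi(\sigma_0=a|\cdot)^2]=\sum_b\mu(\sigma_0=b)\,\mu(\pi(\sigma_0=a|\cdot)^2\mid\sigma_0=b)$ and drops the off-diagonal terms $b\neq a$ by positivity, reaching the identical bound without needing the disjointness check. (The $1/q$ you say must be ``absorbed'' is already dropped in the paper's own proof, so this reflects a pre-existing typographical inconsistency between Definition \ref{def-qEA} and the displayed bound of the theorem, not a gap in your argument.)
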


Note that for the free state of clock models 
$\max_{a\in \Z_q}\mu(\sigma_0=a)$ is equal to $1/q$ by symmetry. More generally, for a central state this term is close to $1/q$ (see \eqref{close-to-free} and Definition \ref{def-central} below). 

Theorem \ref{thm-EA} follows from Theorem \ref{thm-reconstruction}. The proof is elementary and can be found in Section \ref{proof-thm-EA}.

\subsection{{Continuity of the extremal 
decomposition of the free state} }
We prove in the following theorem that uncountably many extremal Gibbs measures enter the decomposition of the central state at low enough temperature.


\begin{thm}[Almost-sure singularity of extremals]\label{thm-sing-ex} 
	Consider any central state $\mu$ in the class of models {with Hamiltonian of the form} \eqref{potential-central} fulfilling the bounds \eqref{uU-condition} and \eqref{hyp-psi}. Then,
	there exist $\beta_0>0$ large enough and $p_1^0=p_1^0(\beta,u,U,d)>0$ small enough such that for any $\beta>\beta_0$, $p_1(\mu)<p_1^0$ and for $\mu\otimes\mu$-a.e.\ pair $(\omega,\omega')$ the extremal Gibbs measures 
$\pi(\cdot|\omega)$ and $\pi(\cdot|\omega')$ are singular with respect to each other, i.e.
\begin{align}
	\mu \otimes \mu\big(\{(\omega,\omega')\in \Omega\times \Omega : \pi(\cdot |\omega) \perp \pi(\cdot | \omega')\}\big)=1.
\end{align}  
\end{thm}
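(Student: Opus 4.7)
The plan is to exploit the branch overlap $\underline\phi^\omega$ defined in the introduction as a tail-measurable observable that $\mu\otimes\mu$-almost surely takes different values under the two extremal measures $\pi(\cdot|\omega)$ and $\pi(\cdot|\omega')$. Because $\underline\phi^\omega$ is a $\liminf$ of empirical averages $\frac{1}{|\Lambda_n|}\sum_{v\in\Lambda_n}\mathbf{1}_{\sigma_v=\omega_v}$ with $\Lambda_n$ escaping every finite set, it is $\F_\infty$-measurable for each fixed $\omega$. By Theorem~\ref{thm-pi-kernel-extremal} and the tail-triviality characterisation of extremality, $\underline\phi^\omega$ is $\pi(\cdot|\omega)$-a.s.\ equal to a constant $c^{(1)}(\omega)$ and $\pi(\cdot|\omega')$-a.s.\ equal to a constant $c^{(2)}(\omega,\omega')$. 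It then suffices to prove that $c^{(1)}(\omega)\neq c^{(2)}(\omega,\omega')$ on a $\mu\otimes\mu$-full set: singularity follows since the two measures are concentrated on the disjoint tail events $\{\underline\phi^\omega=c^{(1)}(\omega)\}$ and $\{\underline\phi^\omega=c^{(2)}(\omega,\omega')\}$.

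\textbf{Separating the two constants.} I would choose $\Lambda_n$ to consist of a sparse, well-separated sequence of vertices along a single infinite branch of $\mathcal T^d$, and first estimate the expected overlap at each individual site $v\in\Lambda_n$ under both measures. For the ``own'' case, the reconstruction bound of Theorem~\ref{thm-reconstruction} applied at $v$ (via tree-automorphism homogeneity) yields $\pi(\sigma_v=\omega_v|\omega)\geq 1-\epsilon_1(\beta)$ with $\mu$-probability at least $1-\epsilon_2(p_1)$ in $\omega$, so that the mean of the corresponding indicator is close to $1$. For the ``cross'' case, the DLR identity \eqref{limDLR} combined with the independence of $\omega$ and $\omega'$ under $\mu\otimes\mu$ gives
\begin{equation*}
\int\!\!\int \pi(\sigma_v=\omega_v|\omega')\,d\mu(\omega')\,d\mu(\omega)=\sum_{a\in\Z_q}\mu(\sigma_v=a)^2,
\end{equation*}
which is $\leq\max_a\mu(\sigma_0=a)$ and, in the central-state regime, close to $1/q$. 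Thus the two asymptotic means are separated by a gap of order $1-1/q$ minus error terms vanishing as $\beta\uparrow\infty$ and $p_1\downarrow 0$.

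\textbf{Concentration and main obstacle.} To upgrade these expectation estimates into pointwise statements on $c^{(1)}(\omega)$ and $c^{(2)}(\omega,\omega')$, I would next establish concentration of the empirical overlap $\frac{1}{|\Lambda_n|}\sum_{v\in\Lambda_n}\mathbf{1}_{\sigma_v=\omega_v}$ around its mean under each of the two measures. The Markov chain structure of extremal Gibbs measures, together with geometric decay of covariances between indicators at sites at large tree-distance controlled by the lazyness parameter $p_1$, provides a Chebyshev-type variance bound whose summability along a sufficiently sparse subsequence of $\Lambda_n$, combined with Borel--Cantelli, forces the empirical overlap to converge almost surely to its asymptotic mean. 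This gives $c^{(1)}(\omega)\geq 1-\epsilon$ $\mu$-a.s.\ in $\omega$ and $c^{(2)}(\omega,\omega')\leq 1/q+\epsilon$ $\mu\otimes\mu$-a.s.\ in $(\omega,\omega')$, and the gap closes the proof. I expect the main technical difficulty to lie in the cross-case concentration: under $\pi(\cdot|\omega')$ the mean of each $\mathbf{1}_{\sigma_v=\omega_v}$ depends on the random pair $(\omega_v,\omega'_v)$, so one must control two sources of randomness simultaneously; calibrating the spacing of $\Lambda_n$ so that the Markov decoupling rate dominates the growth of $|\Lambda_n|$ while Borel--Cantelli remains summable is the delicate balance.
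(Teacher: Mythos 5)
Your proposal is correct and follows essentially the same architecture as the paper: use the tail-measurable branch overlap $\underline\phi^\omega$ to produce a separating tail invariant, with the own-case estimate coming from the reconstruction bound of Theorem~\ref{thm-reconstruction} and the cross-case estimate coming from the marginal identity $\sum_a\mu(\sigma_0=a)^2$, then upgrade both to almost-sure statements by a covariance/Chebychev/Borel--Cantelli argument along a sufficiently sparse subvolume sequence, and conclude singularity from the fact that a tail observable is a.s.\ constant under each extremal. The one place you diverge from the paper is the cross case: you plan to prove a fresh concentration statement for $\frac{1}{|\Lambda_n|}\sum_v \mathbf{1}_{\sigma_v=\omega_v}$ directly under $\pi(\cdot|\omega')$, and you correctly flag this as delicate because the conditional mean then depends on both disorder fields $\omega$ and $\omega'$ simultaneously. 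The paper's Corollary~\ref{cor-glassy} sidesteps this entirely via the pointwise inequality $\mathbf{1}_{\sigma_v=\omega_v}\leq \mathbf{1}_{\omega_v=\omega'_v}+\mathbf{1}_{\sigma_v\neq\omega'_v}$, which decouples the cross case into (i) a strong law of large numbers for the $\sigma$-free disorder overlap $\frac{1}{|\Lambda_n|}\sum_v \mathbf{1}_{\omega_v=\omega'_v}\to\sum_a\mu(\sigma_0=a)^2$ under $\mu\otimes\mu$ (same Chebychev scheme, applied to the product disorder measure), and (ii) the already-established own-case bound of Theorem~\ref{thm-branch-overlap} applied with $\omega'$ in the role of $\omega$. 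This eliminates the need for any concentration analysis mixing $\omega$, $\omega'$, and $\sigma$ at once. Your route is viable in principle but heavier; the triangle inequality is the decisive simplification your plan does not yet exploit.
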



\begin{cor}\label{cor-uncountably} {The decomposition measure $\alpha_\mu$ 
has no atoms,  i.e. $\alpha_\mu(\{\nu\})=0$ for all $\nu\in{\rm ex}\mathcal G(\gamma)$. In particular there are 
uncountably many extremal states which enter in the extremal decomposition of $\mu$. }
%
\end{cor}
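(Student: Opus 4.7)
The plan is to deduce both assertions from Theorem \ref{thm-sing-ex} using only the defining formula \eqref{weights} for $\alpha_\mu$. The key observation is that $\alpha_\mu$ is the pushforward of $\mu$ along the map $\Phi:\omega\mapsto \pi(\cdot\mid\omega)$, which is well-defined and measurable on a set of full $\mu$-measure by Theorem \ref{thm-pi-kernel-extremal} and the measurability of the evaluation $\sigma$-algebra.

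First I would establish the no-atoms statement by contradiction. Suppose there exists $\nu\in{\rm ex}\mathcal{G}(\gamma)$ with $\alpha_\mu(\{\nu\})>0$. By \eqref{weights}, the set $A_\nu:=\{\omega\in\Omega:\pi(\cdot\mid\omega)=\nu\}$ satisfies $\mu(A_\nu)=\alpha_\mu(\{\nu\})>0$, so
\begin{equation*}
(\mu\otimes\mu)(A_\nu\times A_\nu)=\mu(A_\nu)^2>0.
\end{equation*}
For every $(\omega,\omega')\in A_\nu\times A_\nu$ we have $\pi(\cdot\mid\omega)=\pi(\cdot\mid\omega')=\nu$, and a probability measure is never singular with respect to itself. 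This contradicts Theorem \ref{thm-sing-ex}, so no such $\nu$ exists and $\alpha_\mu$ has no atoms.

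For the second part, suppose towards a contradiction that $\alpha_\mu$ were supported on an at most countable set $\{\nu_i\}_{i\in I}\subset{\rm ex}\mathcal{G}(\gamma)$. Using that the measurable structure on ${\rm ex}\mathcal{G}(\gamma)$ separates points (singletons are measurable in the evaluation $\sigma$-algebra), countable additivity yields
\begin{equation*}
1=\alpha_\mu\bigl({\rm ex}\mathcal{G}(\gamma)\bigr)=\sum_{i\in I}\alpha_\mu(\{\nu_i\})=0,
\end{equation*}
by the no-atoms property just established, which is absurd. Hence the support of $\alpha_\mu$, and a fortiori the family of extremal states appearing in the decomposition of $\mu$, is uncountable.

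There is essentially no obstacle here beyond verifying measurability of $A_\nu$ and of singletons in ${\rm ex}\mathcal{G}(\gamma)$, which is standard and already alluded to in the paragraph following \eqref{eval}. All the real work is in Theorem \ref{thm-sing-ex}; the corollary is just a soft measure-theoretic unpacking of its content via the pushforward relation \eqref{weights}.
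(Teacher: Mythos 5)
Your proof is correct and follows essentially the same route as the paper: deduce the no-atoms property by contradiction from Theorem \ref{thm-sing-ex} using the pushforward identity \eqref{weights}. Your version of the contradiction is in fact slightly cleaner than the paper's, since you directly exhibit the positive-measure set $A_\nu\times A_\nu$ of non-singular pairs, whereas the paper implicitly invokes that two extremal Gibbs measures are either equal or mutually singular to identify the complement of the singularity event with the equality event; the countability step at the end is a standard measure-theoretic consequence that the paper leaves implicit.
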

\begin{proof} {Suppose the opposite, namely 
that $\mu(\pi(\cdot|\omega)=\mu_0)>0$, for some atom $\mu_0$. Then
\begin{align*}\label{sing}
	&\mu \otimes \mu
	(\{(\omega,\omega') : \pi(\cdot |\omega) \perp \pi(\cdot | \omega')\})\cr
&	=
	1-\mu \otimes \mu
	(\{(\omega,\omega') : \pi(\cdot |\omega) = \pi(\cdot | \omega')\})\\
	&\leq 1-\mu(\pi(\cdot|\omega)=\mu_0)^2
	<1.
\end{align*}
which is a contradiction.}
\end{proof}

The idea to prove almost sure singularity of typical extremals taken from the product measure 
is to produce a tail-measurable order parameter, which carries enough information 
to distinguish to two different typical extremal Gibbs measures. 
Indeed, as the infinite-volume kernel $\pi(\cdot|\omega)$ is supported on the extremals, 
and extremals are uniquely described by their restriction to the tail-sigma 
algebra, it suffices to find a tail-measurable observable $\phi$ on 
which the expectations 
$\pi(\phi|\omega)$ and $\pi(\phi|\omega')$ differ.  

For this purpose we construct $\phi$ by looking at empirical 
sequences of overlaps of the spin variables $\sigma$ with $\omega$,  
and show that its expectation becomes big in $\pi(\phi|\omega)$ on the one hand, 
and small in $\pi(\phi|\omega')$ on the other hand, 
for typical choices of $(\omega,\omega')$. 
{Theorem \ref{thm-sing-ex} will thus follow from a control of so-called {\it branch overlaps} defined in the section below.
}

\subsection{Concentration of branch-overlap for typical extremals}
\label{sec-branch-overlap-sym}

Let $n\in\N$ and ${\rm r}=(r_1,r_2,\ldots)$ be an increasing sequence of positive integers. Let
		\begin{equation}
		\begin{split}
		\label{lambda-n-r}
		&{\Lambda_n=\Lambda_n^{\rm r}}=\{v_1,v_2,\dots,v_{n^2}\}\subset V
		\end{split}
		\end{equation}
	so that $|\Lambda_n|=n^2$, and the vertices $v_i$ are chosen along a branch of the tree, 
	in such a way that their spacing in graph distance is given by the sequence ${\rm r}$, i.e. $|v_{i+1}-v_i|=:r_i$ for all $i\in\{1,\ldots, n^2\}$. 

	
\begin{defn}[Thinned branch overlaps] \label{def-branch-overlap}
	Define the tail-measurable function, called
	{\it thinned branch-overlap}, measuring how much the configuration $\sigma$ matches with $\omega$ on the increasing sparse volumes $\Lambda_n$, as
	\begin{equation}\begin{split}
	&\underline\phi^\omega=\liminf_{n\uparrow\infty}\frac{1}{|\Lambda^{\rm r}_n|}\sum_{v\in \Lambda^{\rm r}_n}1_{\sigma_v=\omega_v}.\cr
	\end{split}
	\end{equation}
	For any fixed configuration $\omega$, this is a tail-measurable observable 
	w.r.t.\ the dependence on the spins $\sigma$, which takes 
	values on the interval $[0,1]$. Analogous  tail-measurability also holds w.r.t.\ to the parameter $\omega$. 
\end{defn}

Note that there  is thinning  
	in two ways: the volume becomes increasingly sparse, and the {liminf}
	is taken along volumes of $n^2$ sites. 
The following theorem can be viewed as a quantitative statement of the glassiness of the states $\pi(\cdot|\omega)$, for $\mu$-almost every $\omega$: it describes quantitatively how much typical configurations $\sigma$ sampled from $\pi(\cdot|\omega)$ are $\omega$-like.

\begin{thm}[Branch overlap]\label{thm-branch-overlap} 
	Consider any central state $\mu$ in the class of models {with Hamiltonian of the form} \eqref{potential-central} fulfilling the bounds \eqref{uU-condition} and \eqref{hyp-psi}. Then
there are two functions $\epsilon_1(\beta)\downarrow 0$ as $\beta\uparrow \infty$ and $\epsilon_2(p_1)\downarrow0$ as $p_1\downarrow0$ such that for sparse enough sets $\Lambda_n=\Lambda_n^{\rm r}$ 
the following holds:\\
For $\mu$-a.e. $\omega$, 
the thinned branch-overlap $\underline\phi^\omega$
{is $\pi(\cdot |\omega)$-a.s.\  lower bounded by}
\begin{equation}\begin{split}
\label{concentration-bound}
\underline\phi^\omega=
&\liminf_{n\uparrow\infty}\frac{1}{|\Lambda_n|}\sum_{v\in \Lambda_n}1_{\sigma_v=\omega_v}
\geq 1-\epsilon_1(\beta)-\epsilon_2(p_1).
\end{split}
\end{equation}
\end{thm}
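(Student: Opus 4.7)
The plan is to argue on the product space under the joint measure $\tilde\mu(d\omega,d\sigma) := \mu(d\omega)\pi(d\sigma|\omega)$, whose $\sigma$-marginal is $\mu$ by \eqref{limDLR} and whose $\omega$-conditional $\pi(\cdot|\omega)$ is extremal (hence tail-trivial, and by Theorem 12.6 of \cite{HOG} a tree-indexed Markov chain) for $\mu$-a.e.\ $\omega$. Setting $Z_i := \mathbf{1}_{\sigma_{v_i}=\omega_{v_i}}$ and $X_n := |\Lambda_n|^{-1}\sum_{i=1}^{n^2} Z_i$, by Fubini the desired assertion is equivalent to
\begin{align*}
\tilde\mu\Bigl(\liminf_{n\uparrow\infty} X_n \geq 1-\epsilon_1(\beta)-\epsilon_2(p_1)\Bigr) = 1.
\end{align*}

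First I would bound the mean: by tree-homogeneity of $\mu$ together with Theorem~\ref{thm-reconstruction} applied at the vertex $v_i$, conditioning on $\omega_{v_i}=a$ gives
\begin{align*}
E_{\tilde\mu}[Z_i] = \sum_{a\in\Z_q}\mu(\omega_{v_i}=a)\int\pi(\sigma_{v_i}=a|\omega)\,d\mu(\omega|\omega_{v_i}=a) \geq (1-\epsilon_1)(1-\epsilon_2),
\end{align*}
hence $E_{\tilde\mu}[X_n]\geq 1-\epsilon_1-\epsilon_2$. I would then prove concentration of $X_n$ about its mean via a variance bound. Splitting
\begin{align*}
{\rm Var}_{\tilde\mu}(X_n) = |\Lambda_n|^{-2}\Bigl[\sum_i {\rm Var}(Z_i) + 2\sum_{i<j}{\rm Cov}_{\tilde\mu}(Z_i,Z_j)\Bigr],
\end{align*}
the diagonal contributes $O(1/n^2)$, while for $i<j$ the graph distance $|v_j-v_i|=r_i+\cdots+r_{j-1}$ grows with the sparseness parameters $r_i$, producing exponential decay of ${\rm Cov}_{\tilde\mu}(Z_i,Z_j)$ at a rate set by the spectral gap of $\mu$'s transition matrix (controlled by the lazyness $p_1$) and by the analogous decay inside the extremal chain $\pi(\cdot|\omega)$. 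Choosing the $r_i$ increasing fast enough---this is exactly the ``sparse enough'' hypothesis on $\Lambda_n^{\rm r}$---yields $\sum_{j>i}|{\rm Cov}_{\tilde\mu}(Z_i,Z_j)|\leq C$ uniformly in $i$, so ${\rm Var}_{\tilde\mu}(X_n)=O(1/n^2)$. Chebyshev then gives $\tilde\mu(X_n<E_{\tilde\mu}[X_n]-\delta)=O(n^{-2})$, summable in $n$, and Borel--Cantelli delivers $\liminf_n X_n\geq 1-\epsilon_1-\epsilon_2-\delta$ $\tilde\mu$-a.s. Letting $\delta\downarrow 0$ along a countable sequence closes the argument.

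The hard part I expect is the quantitative control of the cross-covariances ${\rm Cov}_{\tilde\mu}(Z_i,Z_j)$: while the $\omega$-marginal alone is a homogeneous Markov chain with an explicit spectral gap, under $\tilde\mu$ the conditional law of $\sigma$ depends on the whole of $\omega$, so the two randomnesses do not decouple straightforwardly. I would handle this via the total-variance decomposition ${\rm Var}_{\tilde\mu}(X_n) = E_\mu\bigl[{\rm Var}_{\pi(\cdot|\omega)}(X_n)\bigr] + {\rm Var}_\mu\bigl(E_{\pi(\cdot|\omega)}[X_n]\bigr)$: the inner variance is bounded using the Markov chain structure of the extremal $\pi(\cdot|\omega)$, while the outer one follows from the observation that the tail-measurable function $\omega\mapsto \pi(\sigma_{v_i}=\omega_{v_i}|\omega)$ concentrates near $1$ at a density-one set of typical sites, thanks to the Key Lemma~\ref{key-lemma} (good/bad site decomposition with Peierls estimates under disorder) combined with iterated applications of the reconstruction bound~\eqref{eq-reconstruction}, valid once $\beta$ is large and $p_1$ is small.
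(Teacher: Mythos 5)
Your plan is structurally close to the paper's: the paper also establishes the pointwise bound $\pi(\sigma_v\neq\omega_v|\omega)\leq 1_{B_v}(\omega)+\epsilon_1(\beta)$ (Key Lemma~\ref{key-lemma}), proves a strong law of large numbers for $\frac{1}{|\Lambda_n|}\sum_v 1_{B_v}(\omega)$ under $\mu$ (Lemma~\ref{lem-meanbad}), and separately a strong law for $\frac{1}{|\Lambda_n|}\sum_v(1_{\sigma_v\neq\omega_v}-\pi(\sigma_v\neq\omega_v|\omega))$ under $\pi(\cdot|\omega)$ (Lemma~\ref{lem-overlap}); combining these gives the theorem. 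Your product-space formulation with $\tilde\mu = \mu\otimes\pi(\cdot|\omega)$, the total-variance decomposition, and Chebyshev plus Borel--Cantelli is a legitimate repackaging of the same structure, and your lower bound on the mean via Theorem~\ref{thm-reconstruction} is correct.

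However, there is a genuine gap in your treatment of the ``hard part'' you correctly identify. You assert that the inner covariances ${\rm Cov}_{\pi(\cdot|\omega)}(Z_i,Z_j)$ decay exponentially ``at a rate set by\ldots the analogous decay inside the extremal chain $\pi(\cdot|\omega)$.'' No such uniform rate exists: $\pi(\cdot|\omega)$ is a tree-indexed Markov chain, but an \emph{inhomogeneous} one whose transition matrices depend on $\omega$, so its correlation decay rate is not controlled uniformly in $\omega$ by any fixed spectral gap. The paper flags exactly this obstruction after~\eqref{kiuj}: ``the rate of convergence may depend on $\omega$, while we want a speed which is uniform\ldots An explicit analysis is difficult, due to the lack of spatial homogeneity.'' The resolution in the paper is not a quantitative exponential bound but a soft one: integrate over $\omega$, use dominated convergence to get $c(u,v):=\int\mu(d\omega)|\pi(\sigma_v\neq\omega_v;\sigma_u\neq\omega_u|\omega)|\to 0$ non-quantitatively, and then choose the spacings $r_i$ \emph{iteratively and adaptively} (so that $\sum_{j<i}c(v_j,v_i)<i^{-2}$), rather than fixing a geometric or polynomial sparsening in advance. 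This is what ``sparse enough'' really means in the theorem statement. Your proposal would go through if you replaced the claimed quantitative decay by this dominated-convergence-plus-adaptive-spacing argument; as written, the claimed exponential bound on the inner covariances is unjustified and is the missing idea. (The outer term ${\rm Var}_\mu(E_{\pi(\cdot|\omega)}[X_n])$, by contrast, \emph{is} amenable to quantitative control via the $1$d Markov chain property of $\mu$ along the branch, which is precisely Lemma~\ref{lem-decorr} and the mechanism you gesture at.)
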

The quantities $\epsilon_1(\beta)$ and $\epsilon_2(p_1)$ are the same quantities as in Theorem \ref{thm-reconstruction}.
The proof can be found in Section \ref{proof-thm-branch-overlap}.
{From Theorem \ref{thm-branch-overlap}, we deduce that the tail-measurable observable $\underline\phi^\omega$ has different expectations under $\pi(\cdot|\omega)$ and $\pi(\cdot|\omega')$ if $\omega\neq\omega'$, through the following corollary. }

\begin{cor} \label{cor-glassy}
Let $\beta$ be large enough and $p_1$ be small enough such that
\begin{equation}
\label{epsilon-small}
\epsilon_1(\beta)+\epsilon_2(p_1)
<\frac{1}{2}(1-{\sum_{a\in \Z_q}\mu(\sigma_0=a)^2}).
\end{equation}
Then there is a sequence 
of integers ${\rm r}=(r_i)_{i\in \N}$ such that for the correspondingly 
defined tail-measurable observable $\underline\phi^\omega$ we have the strict 
inequality 
	\begin{equation}
\begin{split}
\label{eq-glassy}
&\pi(\underline\phi^\omega|\omega)>\pi(
\underline\phi^\omega|\omega')
\end{split}
\end{equation}
for $\mu\otimes\mu$-a.e. $(\omega,\omega')$. 
\end{cor}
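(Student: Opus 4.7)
The plan is to combine the sharp lower bound from Theorem~\ref{thm-branch-overlap}, which gives $\pi(\underline\phi^\omega|\omega)\geq 1-\epsilon_1(\beta)-\epsilon_2(p_1)$ for $\mu$-a.e.\ $\omega$, with a matching upper bound $\pi(\underline\phi^\omega|\omega')\leq \sum_{a\in\Z_q}\mu(\sigma_0=a)^2$ holding for $\mu\otimes\mu$-a.e.\ $(\omega,\omega')$. Under hypothesis \eqref{epsilon-small} these two target values are separated by a gap strictly larger than $\tfrac{1}{2}(1-\sum_a\mu(\sigma_0=a)^2)>0$, so the strict inequality \eqref{eq-glassy} follows as soon as both bounds are in place.

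For the upper bound, I would apply Fatou's lemma to the bounded tail-observable $\underline\phi^\omega$ integrated against $\pi(\cdot|\omega')$ to obtain
\[
\pi(\underline\phi^\omega|\omega')\leq \liminf_{n\uparrow\infty} F_n(\omega,\omega'), \qquad F_n(\omega,\omega'):=\frac{1}{|\Lambda_n|}\sum_{v\in\Lambda_n}\pi(\sigma_v=\omega_v|\omega').
\]
A short calculation based on the tail-kernel disintegration $\mu=\int \pi(\cdot|\omega')\,d\mu(\omega')$ and the spatial homogeneity of $\mu$ yields
\[
\mathbb{E}_{\mu\otimes\mu}\bigl[\pi(\sigma_v=\omega_v|\omega')\bigr] = \sum_{a\in\Z_q}\mu(\sigma_0=a)\,\mu(\sigma_v=a) = \sum_{a\in\Z_q}\mu(\sigma_0=a)^2,
\]
so $\mathbb{E}_{\mu\otimes\mu}[F_n]=\sum_a\mu(\sigma_0=a)^2$ for every $n$.

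The crucial technical step is to concentrate $F_n$ around this mean. Under $\omega\sim\mu$, the values $(\omega_{v_i})_i$ form a stationary Markov chain on the branch with $i$-th transition matrix $P^{r_i}$, while for $\mu$-a.e.\ $\omega'$ the measure $\pi(\cdot|\omega')$ is extremal and hence a tree-indexed Markov chain by Theorem~12.6 of \cite{HOG}. Correlations along the branch then decay exponentially in the graph distance, and by taking the spacings $(r_i)$ to grow sufficiently fast (e.g.\ $r_i\geq c\,i$), the off-diagonal covariances ${\rm Cov}_{\mu\otimes\mu}(\pi(\sigma_{v_i}=\omega_{v_i}|\omega'),\pi(\sigma_{v_j}=\omega_{v_j}|\omega'))$ decay exponentially in $\min(r_i,r_j)$. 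This yields ${\rm Var}_{\mu\otimes\mu}(F_n)=O(n^{-2})$, and a standard Chebyshev plus Borel-Cantelli argument gives $F_n\to \sum_a\mu(\sigma_0=a)^2$ $\mu\otimes\mu$-a.s., and hence the announced upper bound on $\pi(\underline\phi^\omega|\omega')$ via the Fatou inequality above.

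To conclude, one selects a single sparse sequence $r=(r_i)$ fulfilling simultaneously the sparsity hypothesis of Theorem~\ref{thm-branch-overlap} (giving the lower bound on $\pi(\underline\phi^\omega|\omega)$) and the summable-covariance requirement above (giving the upper bound on $\pi(\underline\phi^\omega|\omega')$). Since both conditions amount to sufficiently fast growth of $r_i$, a common choice exists. The main obstacle in this plan is tracking, uniformly over $\omega'$ in a set of full $\mu$-measure, the decay of correlations inside the $\omega'$-dependent and non-translation-invariant extremal states $\pi(\cdot|\omega')$. This uniformity is supplied by the good-site/bad-site decomposition already developed for Theorems~\ref{thm-reconstruction} and~\ref{thm-branch-overlap}, which controls the relevant transition kernels of $\pi(\cdot|\omega')$ on a set of full $\mu$-mass.
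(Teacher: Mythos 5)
Your lower bound and the Fatou step are fine, and the mean computation $\mathbb{E}_{\mu\otimes\mu}[F_n]=\sum_a\mu(\sigma_0=a)^2$ is correct. The gap is in the concentration step, and it is not a small omission.

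You claim that $F_n(\omega,\omega')=\frac{1}{|\Lambda_n|}\sum_v\pi(\sigma_v=\omega_v|\omega')$ concentrates around its mean because the off-diagonal covariances decay exponentially in the branch distance. This does not follow from the good/bad-site decomposition. Write $\pi(\sigma_v=a|\omega')=\mathbf{1}_{\omega'_v=a}+E_v(\omega')$, where the error term satisfies the one-sided control $|E_v|\leq \mathbf{1}_{B_v}(\omega')+\epsilon_1(\beta)$ supplied by the Key Lemma. Expanding the covariance
$\mathrm{Cov}_{\mu\otimes\mu}\bigl(\pi(\sigma_u=\omega_u|\omega'),\pi(\sigma_v=\omega_v|\omega')\bigr)$ in these terms, the pieces involving only the indicators $\mathbf{1}_{\omega'_u=a},\mathbf{1}_{\omega'_v=b}$ do decay exponentially (1d Markov chain mixing), but the cross- and error-terms are controlled only through the absolute-value bound $|E_u E_v|\leq(\mathbf{1}_{B_u}+\epsilon_1)(\mathbf{1}_{B_v}+\epsilon_1)$, which leaves a persistent remainder of order $(m(p_1)+\epsilon_1(\beta))^2$ that does \emph{not} tend to zero as $|u-v|\to\infty$. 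In other words, the good/bad decomposition yields size estimates, not sign or cancellation estimates, and it cannot by itself force $\mathrm{Var}_{\mu\otimes\mu}(F_n)\to 0$. Consequently, the claimed almost-sure limit $F_n\to\sum_a\mu(\sigma_0=a)^2$ — which would force the limit to equal the mean — is not justified; one only gets that the limsup is within $O(\epsilon_1+\epsilon_2)$ of the mean. This is exactly why the paper's hypothesis \eqref{epsilon-small} has the factor $\tfrac12$: it budgets for an $\epsilon_1+\epsilon_2$ error on the $\omega'$-side as well.

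The paper's proof avoids this entirely via the elementary pointwise inequality $\mathbf{1}_{\sigma_v=\omega_v}\leq \mathbf{1}_{\omega_v=\omega'_v}+\mathbf{1}_{\sigma_v\neq\omega'_v}$, which decouples the two randomness sources: the first term depends only on the pair $(\omega,\omega')\sim\mu\otimes\mu$ and its branch average converges to $\sum_a\mu(\sigma_0=a)^2$ by an ordinary SLLN for the independent 1d Markov chains; the second term is precisely the mismatch of $\sigma$ with $\omega'$ under $\pi(\cdot|\omega')$, which is already controlled by Theorem~\ref{thm-branch-overlap} with error $\epsilon_1(\beta)+\epsilon_2(p_1)$. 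No new covariance estimate for the non-homogeneous kernels $\pi(\cdot|\omega')$ is required. If you want to salvage your line of argument, you would have to either prove genuine covariance decay for the tail-measurable functions $\pi(\sigma_v=a|\cdot)$ under $\mu$ (which is delicate precisely because $\mu$ is not tail-trivial here), or accept the $O(\epsilon_1+\epsilon_2)$ slack on the upper bound — at which point you are back to the paper's argument and its hypothesis with the $\tfrac12$.
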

The quantities $\epsilon_1(\beta)$ and $\epsilon_2(p_1)$ are the same quantities as in Theorem \ref{thm-reconstruction}.
{The proof can be found in Section \ref{proof-cor-glassy}. 
From Corollary \ref{cor-glassy} the statement of Theorem \ref{thm-sing-ex} is immediate. 
}

\subsection{Structure of the proofs}

To prove the concentration bound of Theorem \ref{thm-branch-overlap}
it is useful to adopt a quenched-disordered systems view (we refer to \cite{B2006} for a general introduction to statistical mechanics of disordered systems). A configuration $\omega$ drawn from the free/central state $\mu$ corresponds to a boundary condition at infinity, but also plays the role of disorder.
The intuition {behind} is that the extremal measure $\pi(\cdot|\omega)$, which is in general inhomogeneous 
and plays the role of a quenched disordered state,
mostly resembles $\omega$ locally, and for $\mu$-typical $\omega$.

To make this precise, we will need contours, as introduced in \cite{CoKuLe22},
and Peierls bounds relative to the reference configuration $\omega$. 
Typically $\omega$ will contain a small density of broken bonds (along which $\omega$ changes). However 
the latter will not be uniformly sparse, as needed in \cite{CoKuLe22} and \cite{GRS12} to ensure the excess energy estimate leading to Peierls bounds. 
To treat the rare but arbitrarily large regions where the Peierls bound locally fails, we introduce a notion of "bad site" (see Definition \ref{def-bad}), which is an essential tool.   
Our good/bad site decomposition is somewhat reminiscent to that invented by Chayes-Chayes-Fr\"ohlich in \cite{CCF85}, to treat lattice Ising models with i.i.d.\ random bonds, which are mostly but not strictly ferromagnetic. 
However, we work on the tree, and {in a regime where} the "disorder-measure" $\mu$ is not tail-trivial, it is far from an i.i.d.\ disorder measure, making things more intricate.

Nevertheless, {as we shall see}, there is one-dimensional correlation decay, conditionally on the state of the root, along a branch of the tree. 
This will be exploited to prove exponential decorrelation of bad sites in their distance, see Lemma \ref{lem-decorr},
leading to concentration of thinned branch overlaps around their means, under the "quenched measures" $\pi(\cdot |\omega)$, for $\mu$-almost every $\omega$, see Lemma~\ref{lem-overlap}.

\newpage
\section{Proofs}\label{sec-proofs}

We first recall some tools about tree-indexed Markov chains and boundary laws.

\subsection{Markov chains on trees and boundary laws.}\label{sec-BL} Being {nearest neighbor} (n.n.), our class of models \eqref{potential-central} lead to Gibbs measures that are {(spatially)} Markov fields. {As we work on trees, there is an important class of Gibbs measures 
including the extremal Gibbs measures but not equal to them, 
which has  a direct transcription in terms of Markov chains on trees. These are described 
via so-called boundary laws introduced by Zachary \cite{zach83}. Boundary laws 
are (non-normalized) positive measures which are invariant under an interaction-dependent
non-linear map along the tree. Moreover they are closely related (but not equal) to 
the invariant single-site probability measures for the associated 
one-step Markov chain transition matrix, see below, and 
see Georgii \cite[Chapter 12]{HOG} for details.}  \\

In our case of a homogeneous n.n. interaction on the tree, 
the specification \eqref{Gibbs} is equivalently described by a positive 
\textit{transfer matrix (or transfer operator)} $Q:\Z_q\times 
\Z_q\mapsto (0,\infty)$ via the prescription 
\begin{equation*} \label{Def: Gibbs specification}
	\gamma_\Lambda(\omega_\Lambda \mid \omega_{\Lambda^c})=\frac1{Z^\omega_\Lambda} \prod_{{ \{v,w\} \cap \Lambda \neq \emptyset}\atop{v\sim w}}Q(\omega_v,\omega_w).
\end{equation*}
where, writing $b=\{v,w\}$, {with pair potential $\Phi$, and single-site potential $\Psi$ as in \eqref{potential-central}, and}
$$
Q(\omega_v,\omega_w)=:Q_b(\omega)=
e^{-\beta(\Phi_b(\omega) +( \Psi(\omega_v)+\Psi(\omega_w))/(d+1))}.
$$

\subsubsection{Boundary laws}

\begin{defn} A {boundary law} $\lambda$ for a transfer matrix $Q$ is a family of row vectors { $\lambda_{vw} \in \; ]0,\infty[^{\mathbb{Z}_q}$} which satisfy, {for all {oriented pairs 
of nearest neighbors }$v,w \in V$},  the following consistency equation: there exists some $c_{vw} >0$ such that for all $i \in \mathbb{Z}_q$,
	
	\begin{equation*} \label{eq: perGenBL}
		\lambda_{vw}(i)=c_{vw}
		\prod_{z \in \partial \{v\} \setminus \{w\}} \sum_{j\in\Z_q}
		Q(i,j)\lambda_{zv}(j).
	\end{equation*}  
\end{defn}

\begin{thm}[\cite{zach83}]\label{thm-BL} There is a one-to-one relation between 
	Gibbs measures $\mu$ which are also tree-indexed Markov chains and {boundary laws} $\lambda$. \\  
	{A} Gibbs measure $\mu$ is then described via its finite-volume marginals in any $\Lambda\Subset V$	in the form 
	\begin{equation}\label{BoundMC}
		\mu(\omega_{\Lambda \cup \partial \Lambda}) = (Z^\omega_\Lambda)^{-1} \prod_{w \in \partial \Lambda} \lambda_{w w_\Lambda}(\omega_w) \prod_{b \cap \Lambda \neq \emptyset} Q_b(\omega),
	\end{equation}
	where $w_{{\Lambda}}$ is the unique neighbor of $w\in \partial \Lambda$ 
	which lies in $\Lambda$. 
	The Markov chain transition operator is given {for all $v,w \in V$ by the stochastic matrix}
	\begin{equation}\label{transition-matrix}
		P_{vw}(\omega_v,\omega_w)=\frac{Q(\omega_v,\omega_w)\lambda_{wv}(\omega_w)}
		{\sum_{j\in\Z_q}Q(\omega_v,j)\lambda_{wv}(j)}. 
	\end{equation}
\end{thm}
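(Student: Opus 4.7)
The plan is to prove the two directions of the bijection separately: first, that every boundary law $\lambda$ gives rise via \eqref{BoundMC} to a Gibbs measure $\mu \in \mathcal{G}(\gamma)$ which is a tree-indexed Markov chain; and conversely, that every Markov-chain Gibbs measure determines such a $\lambda$.

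For the forward direction, given $\lambda$, I would take \eqref{BoundMC} as the definition of a projective family $\{\mu_\Lambda\}_{\Lambda \Subset V}$ on $\Omega_0^{\Lambda \cup \partial \Lambda}$ and verify its consistency. The crucial step is the one-layer enlargement: if $\Lambda' = \Lambda \cup \{w\}$ with $w \in \partial \Lambda$ and inner neighbor $w_\Lambda = v$, then summing the $\Lambda'$-formula over the spins on the new boundary $\partial\{w\} \setminus \{v\}$ produces the factor
\[
\prod_{z \in \partial\{w\}\setminus\{v\}} \sum_{j} Q(\omega_w,j)\,\lambda_{zw}(j),
\]
which by the boundary-law equation equals $c_{wv}^{-1}\lambda_{wv}(\omega_w)$, absorbing $c_{wv}$ into the partition function and restoring the $\Lambda$-formula. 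Kolmogorov extension then yields $\mu$ on $\Omega$. DLR consistency is immediate from \eqref{BoundMC} since the dependence on $\omega_{\Lambda^c}$ factorizes through $\omega_{\partial \Lambda}$, and the ratio with the appropriate partition function reproduces $\gamma_\Lambda$. The Markov property \eqref{transi} follows because \eqref{BoundMC} factorizes across any bond of the tree, splitting the system into independent halves once the shared spin is fixed. Computing $\mu(\sigma_w = j \mid \sigma_v = i)$ from the marginal on the pair $\{v,w\}$ then gives exactly \eqref{transition-matrix}.

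For the converse, I would start from a Markov-chain Gibbs measure $\mu$ with transition operator $P_{vw}(i,j) = \mu(\sigma_w = j \mid \sigma_v = i)$ and construct $\lambda$ so that \eqref{transition-matrix} holds. Using DLR on the singleton $\{w\}$ together with the Markov property at the edge $\{v,w\}$, the conditional distribution of $\sigma_w$ given $\sigma_v=i$ and a configuration on the branch beyond $w$ is proportional to $Q(i,j)$ times a function of $j$ alone; integrating out the branch beyond $w$ under $\mu$ yields $P_{vw}(i,j) \propto Q(i,j)\,\lambda_{wv}(j)$ for some vector $\lambda_{wv}$ that must not depend on $i$. Once $\lambda$ is constructed, the boundary-law equation is obtained by pushing the DLR identity one more layer into the tree: the products over branches $z \in \partial\{v\}\setminus\{w\}$ appear naturally from the tree-factorization of the Markov chain beyond $v$. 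The marginal formula \eqref{BoundMC} for a general $\Lambda$ is then recovered by iterating the transition from an interior vertex outward and matching the boundary weights along $\partial \Lambda$.

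The main obstacle is the converse direction, specifically establishing the $i$-independence of $\lambda_{wv}(j)$ and the compatibility of the normalization constants $c_{vw}$ across oriented edges. Without translation-invariance, one must handle a whole family of boundary vectors indexed by oriented edges and verify that the boundary-law equation holds with a coherent choice of scalars. A convenient way to proceed is to fix a reference state $i_0 \in \Z_q$ and set $\lambda_{wv}(j) := P_{vw}(i_0,j)/Q(i_0,j)$, then show \emph{a posteriori}, using DLR on progressively larger volumes, that the ratio $P_{vw}(i,j)/Q(i,j)$ is proportional (in $j$) to this vector for every $i$; the remaining steps are bookkeeping that exploits the factorization of the measure along every edge of the tree.
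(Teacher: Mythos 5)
The paper does not prove this theorem: it is stated as a background result cited directly from Zachary \cite{zach83}, and the text moves on immediately to its specialization to homogeneous boundary laws (\ref{hom-BL})--(\ref{hom-transition-matrix}). There is therefore no ``paper's own proof'' to compare against; I am assessing your proposal on its own terms.

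Your outline is a sound reconstruction of the standard two-direction proof. The forward direction is essentially complete as sketched: the one-layer enlargement $\Lambda\mapsto\Lambda\cup\{w\}$ is correct (the new outer vertices are exactly $\partial\{w\}\setminus\{v\}$, and summing their spins produces $\prod_{z}\sum_j Q(\omega_w,j)\lambda_{zw}(j)=c_{wv}^{-1}\lambda_{wv}(\omega_w)$, restoring the $\Lambda$-formula with the constant absorbed into $Z_\Lambda^\omega$); Kolmogorov then gives the infinite-volume measure, the DLR property is read off from the factorization of the dependence on $\omega_{\Lambda^c}$ through $\omega_{\partial\Lambda}$, and computing the two-point marginal at an edge yields $\mu(\omega_v,\omega_w)\propto\lambda_{vw}(\omega_v)Q(\omega_v,\omega_w)\lambda_{wv}(\omega_w)$, from which (\ref{transition-matrix}) follows by normalizing over $\omega_w$. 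You should add a word to justify (\ref{transi}) with the infinite past (a backward martingale / monotone-class step to pass from finite sub-pasts to $\mathcal F_{\text{past}(v,w)}$), but this is routine.

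For the converse direction, your identification of the main obstacle is exactly right, but the resolution is more than ``bookkeeping.'' The key is to combine the single-site DLR identity at $w$ with the tree-Markov chain property in both conditional directions across the edge $(v,w)$. Computing $\mu(\sigma_w=j,\sigma_{\partial w\setminus v}=\omega\mid\sigma_v=i)$ two ways, once by DLR at $w$ and once via $P_{vw}(i,j)\prod_{z}P_{wz}(j,\omega_z)$ (using that $v$ lies in the past of each edge $(w,z)$), yields that the ratio
$Q(i,j)\prod_z Q(j,\omega_z)\,/\,\bigl(P_{vw}(i,j)\prod_z P_{wz}(j,\omega_z)\bigr)$
is independent of $j$; since the left side $Q(i,j)/P_{vw}(i,j)$ is also independent of $\omega$, one deduces that $Q(i,j)/P_{vw}(i,j)$ factors as $a(i)\,b(j)$, i.e.\ that $P_{vw}(i,j)\propto Q(i,j)\lambda_{wv}(j)$ with $\lambda_{wv}$ independent of $i$, as you wanted. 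With this in hand, pushing the DLR identity one more layer produces the boundary-law equation and (\ref{BoundMC}) as you indicate. So the proposal is correct, but I would not describe this step as bookkeeping: it is the genuine content of the converse.
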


We note that no homogeneity of the Gibbs measure, boundary law, and transition 
operator are assumed. It is perfectly possible and very relevant that homogeneous 
$Q$ allow for non-trivial non-homogeneous Gibbs measures $\mu$  (as for example the Blekher-Ganikhodgaev states \cite{BLG91}, which {can be seen as some analogues} of the Dobrushin states \cite{Dob72} on the tree). 
On the other hand, the theory presented above also allows for non-homogeneous 
interactions depending on the edges, but we do not consider this case in 
the present work.

{In the special case of homogeneous boundary laws $\lambda_{vw}(i)\equiv u(i)$ on regular trees of 
	degree $d$, considered here, note that} these must satisfy 
the homogeneous equation 
\begin{equation}\label{hom-BL}
	u(i)=c\big(\sum_{j\in\Z_q}
	Q(i,j)u(j)\big)^d,  
\end{equation}
which we may write in short notation as $u=c (Qu)^d$. The constant $c$ can be chosen to our convenience, a possible and often convenient choice 
is $c=1$. The single-site marginal of the measure given in \eqref{BoundMC} then becomes\footnote{where the $\ell^p$ norm of a function $f:\Z_q\to\R$ is defined by $\Vert f\Vert_p:=(\sum_{i\in\Z_q}|f(i)|^p)^{1/p}$.}
${u^\frac{d+1}{d}}/{\Vert u^\frac{d+1}{d}\Vert_1}$ and the transition matrix given in \eqref{transition-matrix} becomes
\begin{equation}\label{hom-transition-matrix}
	P^u(i,j)= \frac{Q(i,j)u(j)}{\sum_{k\in\Z_q}Q(i,k)u(k)}, \text{ for } i,j \in \Z_q.
\end{equation}

{In our non-hard core context, homogeneous boundary laws $u$ can {also} be characterized by consistent effective boundary fields $h=(h_1,\dots,h_q)$, defined as {$u_i=e^{h_i}$,  for $i \in \mathbb{Z}_q$,} that themselves satisfy a consistency mean-field equation.
	
	In the Ising case, this equation involving hyperbolic tangent possibly {has 3 independent solutions }and {gives} rise to three homogeneous Markov chains $\mu^+,\mu^-$ and the free state $\mu$ {(see the work of Spitzer \cite{Spi75}, or the one of Higuchi \cite{hig77}, who called $\mu$ the 'third Markov chain', and already noticed that it was not necessarily extremal)}. The case of the free state $\mu$ is discussed in the next section.}
	
	{For the Potts model, but also very generally for $q$-clock models
		in the case of absence of an external field, 
		this equation has a unique solution at small $\beta$ and correspondingly uniqueness of the Gibbs measure.
		For large $\beta$, the situation for general clock models in the class \eqref{model-clock} is already more interesting 
		and looks as follows (see \cite{KRK14, KR17} and \cite{AbHeKuMa2023}): 
		For each subset $A\subset\Z_q$ of the local spin-space, at sufficiently large $\beta\geq \beta_0(|A|)$ 
		there is a spatially homogeneous 
		Markov chain $\mu_{A}$ whose single-site marginals $\pi_A=\mu_{A}\circ \sigma_0^{-1}$
		are concentrated on the spin-values in $A$. Moreover, the restriction of $\pi_A$ to $A$ approximately 
		equals the equidistribution on $A$, with explicit $\beta$-dependent error bounds.  
		It is important to realize that these states contain the $q$ states with singleton-localization centers $|A|=1$, constructed in \cite{HeKu21}, but at 
		sufficiently low temperatures there are always independent 
		states with non-singleton localization centers $A$. 
		More precisely, in \cite{AbHeKuMa2023}  an explicit threshold $\eta(d,N)>0$ was constructed such that 
		the condition {$\Vert Q_0-1_{0}\Vert_{\frac{d+1}{2}}<\eta(d,|A|)$}
		implies existence 
		and concentration properties of $\mu_A$, {where the function $Q_0(i):=Q(i,0)$ describes the interaction of the clock model.}    
		It is elementary that this smallness {of the norm} is implied by {the explicit low-temperature 
		condition}
		$(q-1)^\frac{2}{d+1}e^{-\beta u}<\eta(d,|A|)$ in our class of models \eqref{model-clock}, as we shall see in the next section.
	}
	\\

	\subsubsection{Free states of clock models}
	Clock models are defined by the requirement that the transfer matrix 
	depends only on the distance in $\Z_q$ between $i$ and $j$ and hence has the form  
	$Q(i,j)=Q_0(j-i)$ with an even function $Q_0$ on $\Z_q$ which we assume 
	to be strictly positive. 
	
	In that case the homogeneous boundary law equation \eqref{hom-BL} can be written in terms 
	of the discrete convolution as
	$$u=(Q_0*u)^d.$$ 
	This in particular shows that the constant boundary law 
	$u(i)=a$ for all $i\in \Z_q$ 
	solves the equation for the non-zero value of $a$ given 
	by $a=(a\Vert Q_0\Vert_1)^d$. 
	
	The Markov chain transition operator \eqref{hom-transition-matrix} obtained for constant boundary law 
	is then the normalized transition operator 
	itself, i.e. 
	\begin{equation}\label{freeme}
		P^u(i,j)=\frac{Q_0(i-j)}{\Vert Q_0\Vert_1}
	\end{equation}
	which has as unique invariant distribution the equidistribution on $\Z_q$. The formula {(\ref{BoundMC})} for the Gibbs measure $\mu$ obtained for constant boundary law 
	then reduces to 
	\begin{equation}\label{freeGibbsy}
		\mu(\omega_{\Lambda \cup \partial \Lambda})
		= (Z^\omega_\Lambda)^{-1} \prod_{b \cap \Lambda \neq \emptyset} Q(\omega_b),
	\end{equation}
	from which we also see with \eqref{freeme} that our first 
	definition of a homogeneous tree-indexed Markov chain is satisfied.
	The r.h.s.\ of \eqref{freeGibbsy} on the other hand is the obvious formula 
	for the open boundary condition finite-volume Gibbs measure in $\Lambda$. 
	In this way we see that the free state in clock models is rediscovered 
	as the tree-indexed Markov chain which is obtained for constant boundary law.

	\subsubsection{Central states of perturbed clock models}\label{sec-def-central}
	
	We now consider more generally models 
	which are perturbations of clock models, 
	by which we mean that $$Q=Q_0+\tilde Q$$ where $\tilde Q$ is small in (some) matrix norm. 
	Such a situation arises for example in the important special case of 
	a clock model which is perturbed by additional single-site terms coming from 
	a non-trivial potential $\Psi_{\{v\}}\equiv\Psi_0$. 
	In this case the transfer operator does not describe  a clock model anymore but takes the general matrix form 
	$$Q(i,j)=a(i)Q_0(i,j)a(j),$$  
	where $a(i)=e^{-\beta\Psi(i)/(d+1)}$ is close to one for all $i$, and 
	{$Q_0(i,j)$} only depends on $|i-j|$.
	Assume without loss of generality the normalization $\Vert Q_0 \Vert_1=1$, let $x:=u^\frac{1}{d}$ 
	and write the homogeneous equation \eqref{hom-BL} as $F(x,Q)=0$ with 
	\begin{equation}\label{BL-middle-state} 
		F_i(x,Q):=x(i)-\sum_{j}Q(i,j)x(j)^d  \text{ for all } i\in\Z_q.
	\end{equation}
	Then we have 
	\begin{lemma}\label{lemma-middle} 
		Suppose that all eigenvalues of  $Q_0$ are different from $1/d$. 
		Then there is a neighborhood of $Q_0$ such that for all $Q$ in this neighborhood 
		there exists a continuously differentiable solution $Q\mapsto\bar x(Q)$ of the boundary law equation \eqref{BL-middle-state}
		for $Q$, which has the property that $\bar x(Q_0)=1$.  
	\end{lemma}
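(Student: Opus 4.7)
The plan is to apply the implicit function theorem to the map $F(x,Q)$ defined in \eqref{BL-middle-state}, viewed as a smooth map from $\R^q \times \R^{q\times q}$ to $\R^q$, at the base point $(x,Q)=(1,Q_0)$ where $1$ denotes the constant vector with all entries equal to $1$.

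First I would verify that $(1,Q_0)$ is a zero of $F$. Since $Q_0$ is a clock-model transfer matrix, $Q_0(i,j)$ depends only on $j-i \in \Z_q$, so every row sum equals $\sum_{j\in\Z_q} Q_0(i,j)=\sum_{k\in\Z_q}Q_0(k)=\Vert Q_0\Vert_1=1$ under the normalization assumption. Hence
\begin{equation*}
F_i(1,Q_0)=1-\sum_{j\in\Z_q}Q_0(i,j)\cdot 1^d = 0
\end{equation*}
for every $i\in\Z_q$, so $\bar x(Q_0)=1$ is indeed a solution.

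Next I would compute the differential $D_x F$ with respect to $x$. Differentiating $F_i(x,Q)=x(i)-\sum_j Q(i,j)x(j)^d$ gives
\begin{equation*}
\frac{\partial F_i}{\partial x(k)}(x,Q)=\delta_{ik}-d\, Q(i,k)\, x(k)^{d-1}.
\end{equation*}
Evaluated at $(1,Q_0)$ this yields the matrix $D_x F(1,Q_0)=I-d\,Q_0$. By the hypothesis that no eigenvalue of $Q_0$ equals $1/d$, the matrix $I-dQ_0$ is invertible.

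Since $F$ is obviously $C^\infty$ jointly in $(x,Q)$ (indeed polynomial in $x$ and linear in $Q$), the implicit function theorem applies: there exists a neighborhood $\mathcal U$ of $Q_0$ in the space of transfer matrices and a unique $C^1$ map $\bar x:\mathcal U\to \R^q$ with $\bar x(Q_0)=1$ and $F(\bar x(Q),Q)=0$ for every $Q\in\mathcal U$. Continuity of $\bar x$ also ensures that, by shrinking $\mathcal U$ if necessary, the coordinates $\bar x_i(Q)$ stay strictly positive, so that $\bar x(Q)^d$ genuinely defines a (normalizable) homogeneous boundary law of the perturbed transfer matrix $Q$. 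There is no real obstacle here beyond checking the spectral condition; the substance of the lemma lies in identifying $I-dQ_0$ as the relevant linearization, and this is dictated by the $x(j)^d$-nonlinearity of \eqref{BL-middle-state}.
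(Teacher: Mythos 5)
Your proof is correct and follows essentially the same route as the paper's: verify $F(1,Q_0)=0$ using the row-sum normalization, compute $D_xF(1,Q_0)=I-dQ_0$, invoke the spectral hypothesis for invertibility, and apply the implicit function theorem. The extra remark about shrinking the neighborhood to keep $\bar x(Q)$ entrywise positive is a sensible addition that the paper leaves implicit.
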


	\begin{proof}[Proof of Lemma \ref{lemma-middle}]
		It is a straightforward application of the implicit function theorem. 
		We have $F(1,Q_0)=0$, so we can find a neighborhood around $Q_0$ in matrix norm such that a perturbed 
		solution exists, 
		if $D_x F(1,Q_0)=id_{q} -d Q_0$ is invertible.
	\end{proof}
	
	Note that the eigenvalues of the matrix 
	{$Q_0$} are given by discrete Fourier transform of the vector {$(Q_0(j))_{j\in\Z_q}$} and hence directly computable. 
	We note in particular that for the $\beta$-dependent normalized form 
	$$Q_0(j)=e^{-\beta \Phi_0(j)}/\Vert e^{-\beta \Phi_0}\Vert_1$$ 
	with a potential $\Phi_0:\Z_q \mapsto [0,\infty)$ which has a 
	minimimum at $0$, and satisfies $0<u\leq \Phi_0(i)$ for $i\neq 0$, the condition of Lemma \ref{lemma-middle} is valid for large enough $\beta$. 
	More quantitatively, this is ensured by 
	\begin{equation} 
		\frac{d-1}{d+1}>(q-1)e^{-\beta u}
	\end{equation}
	which follows from the Lemma \ref{LemmaQuant} below.

	\begin{defn}\label{def-central} {Consider the continuously differentiable solution $\bar{x}$ of Lemma \ref{lemma-middle}}. We call {central state} the Markov chain 
		Gibbs state associated to the pair $(\bar x(Q),Q)$ solving \eqref{BL-middle-state}. 
	\end{defn}
	{As a consequence of Lemma \ref{lemma-middle},}
	the corresponding single-site marginal
	$$Q\mapsto \pi_Q=\frac{\bar x(Q)^{d+1}}{\Vert \bar x(Q)^{d+1}{\Vert_1}}$$ 
	and transition matrix 
	$$Q\mapsto P_Q(i,j)=\frac{Q(i,j)\bar x(Q)^{d}_j}{\sum_k Q(i,k)\bar x(Q)^{d}_k}$$
	are continuously differentiable perturbations of the values
	{$\pi_{Q_0}=\frac{1}{q}1$ and $P_{Q_0}=Q_0/{\Vert Q_0\Vert_1}$ of the free state of}
	the reference clock model $Q_0$ (where we denote by $1$ the vector with all coefficients equal to 1).

	Moreover, in this normalization, the perturbed solution is 
	close to the free  state of the clock model as 
	\begin{equation} \label{close-to-free}
		\bar x(Q)=1+ (id_{q} -d Q_0)^{-1}(Q-Q_0)1 + o(\Vert Q-Q_0\Vert)
	\end{equation}
	in any matrix norm $\Vert Q-Q_0\Vert$. \\

	We now give lower bounds on the eigenvalues of the transition 
	operator of a clock model, {which are larger than $1/d$ for $\beta$ large enough, ensuring the condition in Lemma \ref{lemma-middle}.} 
	
	\begin{lemma}\label{LemmaQuant} Consider a clock model {with Hamiltonian} in the class  \eqref{model-clock} fulfilling the bound \eqref{uU-condition}. We have the lower bounds on the eigenvalues $\lambda_j$ of 
		the transition operator $Q_0=\frac{e^{-\beta \Phi}}{\Vert e^{-\beta \Phi} \Vert_1}$ of 
		the form 
		\begin{equation*} 
			\lambda_j \geq \frac{1-(q-1)e^{-\beta u}}{1+(q-1)e^{-\beta u}}.\end{equation*}
	\end{lemma}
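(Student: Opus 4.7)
The plan is to exploit the fact that for a clock model (as in \eqref{model-clock}) the normalized transfer matrix $Q_0$ is circulant, since $Q_0(i,j)$ depends only on $|i-j|$ in $\Z_q$. Circulant matrices are simultaneously diagonalized by the discrete Fourier basis, so the eigenvalues are explicitly given by the discrete Fourier transform of the first row. More precisely, writing $q_k := e^{-\beta \bar u(k)}/Z$ with $Z = \sum_{\ell \in \Z_q} e^{-\beta \bar u(\ell)} = \Vert e^{-\beta\Phi}\Vert_1$, the eigenvalues are
\begin{equation*}
\lambda_j = \sum_{k\in\Z_q} q_k\, e^{2\pi i jk/q}, \qquad j \in \Z_q.
\end{equation*}

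Next I would use the clock symmetry $\bar u(k)=\bar u(-k)$, which implies $q_k = q_{-k}$ and hence gives the real representation
\begin{equation*}
\lambda_j = q_0 + \sum_{k=1}^{q-1} q_k \cos(2\pi jk/q).
\end{equation*}
Bounding each cosine by $-1$ from below yields the uniform lower bound
\begin{equation*}
\lambda_j \;\geq\; q_0 - \sum_{k=1}^{q-1} q_k \;=\; \frac{1 - \sum_{k=1}^{q-1} e^{-\beta \bar u(k)}}{1 + \sum_{k=1}^{q-1} e^{-\beta \bar u(k)}},
\end{equation*}
after dividing numerator and denominator by $e^{-\beta \bar u(0)}=1$ (recall $\bar u(0)=0$ since $u_{i,i}=0$).

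Finally I would invoke the hypothesis \eqref{uU-condition} which gives $\bar u(k)\geq u$ for every $k\neq 0$, so $e^{-\beta \bar u(k)}\leq e^{-\beta u}$, whence $\sum_{k=1}^{q-1} e^{-\beta \bar u(k)} \leq (q-1)e^{-\beta u}$. Since the function $x\mapsto (1-x)/(1+x)$ is decreasing on $[0,1)$, substituting this bound yields
\begin{equation*}
\lambda_j \;\geq\; \frac{1-(q-1)e^{-\beta u}}{1+(q-1)e^{-\beta u}},
\end{equation*}
which is the claimed estimate. There is no real obstacle here: the only nontrivial ingredient is the circulant diagonalization, and everything else is a short monotonicity argument. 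This bound exceeds $1/d$ precisely when $(d-1)/(d+1) > (q-1)e^{-\beta u}$, which is how the lemma feeds back into Lemma \ref{lemma-middle}.
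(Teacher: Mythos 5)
Your proof is correct, and it takes a genuinely different route from the paper's. You exploit the circulant structure of $Q_0$ coming from the clock symmetry, write the eigenvalues explicitly as the discrete Fourier transform of the first row, $\lambda_j = q_0 + \sum_{k=1}^{q-1} q_k \cos(2\pi jk/q)$, and bound each cosine below by $-1$. The paper instead avoids explicit diagonalization: it estimates the spectral radius of the symmetric matrix $e^{-\beta\Phi} - \mathrm{id}_q$ by a Rayleigh-quotient argument, using $|\langle v,(e^{-\beta\Phi}-\mathrm{id}_q)v\rangle|\leq e^{-\beta u}\bigl((\sum_i|v_i|)^2-1\bigr)\leq e^{-\beta u}(q-1)$ for unit $v$, and then normalizes by $\Vert e^{-\beta\Phi}\Vert_1 \leq 1+(q-1)e^{-\beta u}$. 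Both arrive at the same bound, but with different intermediate trade-offs. Your DFT computation actually yields the slightly sharper intermediate estimate $\lambda_j \geq \bigl(1-\sum_{k\neq 0}e^{-\beta\bar u(k)}\bigr)/\bigl(1+\sum_{k\neq 0}e^{-\beta\bar u(k)}\bigr)$ before you coarsen $\bar u(k)\geq u$; the paper's Rayleigh argument, by contrast, uses only symmetry of the transfer matrix and a uniform off-diagonal bound, so it would apply verbatim to non-circulant symmetric transfer operators as well (though the lemma is only stated for clock models, so that extra generality is not used). Both proofs correctly feed into Lemma \ref{lemma-middle} via the threshold $(d-1)/(d+1)>(q-1)e^{-\beta u}$.
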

	\proof Clearly $\Vert e^{-\beta \Phi}  \Vert_1\leq 1+(q-1)e^{-\beta u}$. 
	To estimate the spectral radius of the symmetric matrix 
	$e^{-\beta \Phi}-id_q$, observing that for $v$ with $\Vert v \Vert_2=1$ 
	we have 
	\begin{equation} 
		\begin{split}
			&|\langle v, (e^{-\beta \Phi}-id_q)v\rangle |\leq e^{-\beta u} \sum_{i\neq j}|v_i| |v_j|\cr
			&= e^{-\beta u}( (\sum_{i}|v_i|)^2 -1)\leq e^{-\beta u} (q-1).
		\end{split}
	\end{equation}
	This implies that the eigenvalues of $e^{-\beta \Phi} $ are bounded below 
	by $1-e^{-\beta u} (q-1)$, which  proves the lemma. 
	\endproof

	\subsubsection{Lazyness assumption}\label{sec-small-p1}
	
	Recall the lazyness parameter $p_1(\mu)$ in Definition \ref{def-p1}.
	For our main Theorem \ref{thm-branch-overlap} to be meaningful 
	we need sufficient smallness of the quantity $p_1(\mu)$.

	From the above follows that the{ \it central states} obtained by small perturbations 
	of clock models have an associated transition matrix which keep the small $p_1$ property. Indeed, 
	assume again the normalization of $Q_0$ such that  $\Vert Q_0 \Vert_1=1$, and $Q_0(0)=1$. 
	Suppose that all eigenvalues of  $Q_0$ are different from $1/d $, and let 
	$\mu_Q$ denotes the central state, defined in a sufficiently small neighborhood of $Q_0$.  
	Then  $Q\mapsto p_1(\mu_Q)$ is a continuously differentiable function.  
	In particular, whenever the reference clock model $Q_0$ has small  
	\begin{align}\label{def-p1-bis}
		p_1(\mu_{Q_0})=\frac{\sum_{i\neq 0} Q_0(i)}{\Vert Q_0 \Vert_1}
		\leq(q-1)e^{-\beta u},
	\end{align}
	this smallness carries over to $p_1(Q)$ for $Q$ 
	in a sufficiently small neighborhood around $Q_0$. Specifically for the Potts model 
	as a reference clock model we have 
	\begin{equation}\label{p1-free-clock}
		p_1(Q_0)=\frac{q-1}{\e^{\beta}+q-1}	
	\end{equation}
	which tends to 0 as $\beta$ tends to infinity.

\subsection{Contours and Excess Energy Lemma}

\begin{defn}[Contour with respect to a fixed configuration $\omega^0$]\emph{}\\
	Let $\omega^0\in \Omega_0^V$ be a fixed reference 
	configuration. 
	A {\it contour} for the spin configuration $\omega\in \Omega_0^V$ relative to $\omega^0$
	is a pair $$\bar \gamma=(\gamma, \omega_{\gamma})$$ where the support {
	$\gamma \ {\subset} \ \{v\in V: \omega_v\neq \omega_v^0\}$ is 
	a connected component of the set of incorrect points for 
	$\omega$ (with respect to $\omega^0$), and {$\omega_\gamma=(\omega_v)_{v\in\gamma}$}.}
	\end{defn}

{
Given a fixed reference configuration $\omega^0$, we will work with a contour representation of the spin partition function 
in a finite volume $\Lambda$ with boundary condition equal to $\omega^{0}$, which reads 
\begin{equation}
\begin{split}
Z_{\Lambda}^{\omega^0}={\sum_{n\in\N}}\sum_{\bar \gamma_1, \dots, \bar \gamma_n}
\prod_{i=1}^n \rho[\omega^0](\bar\gamma_i)
\end{split}
\end{equation}
where the sum is over pairwise compatible contours $\bar\gamma$ with activities 
$$
\rho[\omega^0](\bar\gamma)=
\exp({-\beta(H_{\gamma\cup \partial \gamma}(\omega_{\gamma\cup \partial \gamma})- 
H_{\gamma\cup \partial \gamma}(\omega^{0}_{\gamma\cup \partial \gamma}))})
$$
given in terms of the excess energy of $\omega$ with respect to $\omega^0$.  
We write $\bar\gamma_1 \sim \bar\gamma_2$ whenever $\bar\gamma_1$ and 
$\bar\gamma_2$ are compatible. 
This representation together with the tree structure of our graph (where contours have no interior) imply the following Peierls bound:

\begin{lemma}[Peierls bound]\label{lem-Peierls} {Consider  $\omega^0 \in \Omega$ and a volume $\Lambda \Subset V$. Then}
	\begin{align}
		\mu^{\omega^0}_\Lambda(\sigma_v\neq\omega^0_v)
		&=\sum_{\bar\gamma:\gamma\ni v}^{\Lambda}
		\rho[\omega^0](\bar\gamma) \cdot
	\frac{{\sum_n}\sum^{\Lambda}_{\bar\gamma_1,\dots, \bar\gamma_n \sim \bar\gamma}\prod_{i=1}^n \rho[\omega^0](\bar\gamma_i)}{
	{\sum_n}\sum^{\Lambda}_{\bar\gamma_1,\dots, \bar\gamma_n}\prod_{i=1}^n 
	\rho[\omega^0](\bar\gamma_i)}\nonumber\\
	&\leq \sum_{\bar\gamma:\gamma\ni v}^{\Lambda}
		\rho[\omega^0](\bar\gamma).
	\end{align}
	where the sum is over contours $\bar\gamma$ in the finite volume $\Lambda$.
\end{lemma}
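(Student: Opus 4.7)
The plan is to carry out the standard Peierls representation, adapted to the reference configuration $\omega^0$ and using the tree structure already invoked above. First, I would decompose the event $\{\sigma_v\neq\omega^0_v\}$ according to the (unique) contour of $\sigma$ whose support contains $v$: any configuration $\omega$ with $\omega_v\neq\omega^0_v$ gives rise to a well-defined family of contours $(\bar\gamma_1,\dots,\bar\gamma_n)$, namely the connected components of its set of incorrect points (together with the spins on them), and exactly one of these contains $v$. This yields the disjoint decomposition
\begin{equation*}
\mu^{\omega^0}_\Lambda(\sigma_v\neq\omega^0_v)=\sum_{\bar\gamma:\gamma\ni v}^{\Lambda}\mu^{\omega^0}_\Lambda\bigl(\bar\gamma\text{ is a contour of }\sigma\bigr),
\end{equation*}
where the sum runs over contours fitting inside $\Lambda$.

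Second, I would justify the contour factorization of the excess energy. For the nearest-neighbor Hamiltonian of the form \eqref{potential-central}, the difference $H_\Lambda(\omega\omega^0_{\Lambda^c})-H_\Lambda(\omega^0\omega^0_{\Lambda^c})$ only picks up contributions from edges $\{v,w\}$ with at least one incorrect endpoint, plus single-site contributions on incorrect vertices. On a tree, distinct contours sit in disjoint edge-neighborhoods (they have no ``interior''), so the excess energy decomposes as a sum of local excess energies, one per contour, evaluated on $\gamma_i\cup\partial\gamma_i$. This is exactly the activity $\rho[\omega^0](\bar\gamma_i)$ introduced above, and leads to the polymer-type identity
\begin{equation*}
Z^{\omega^0}_\Lambda \, e^{\beta H_\Lambda(\omega^0\omega^0_{\Lambda^c})} = \sum_{n\geq 0}\sum^{\Lambda}_{\bar\gamma_1\sim\cdots\sim\bar\gamma_n}\prod_{i=1}^n\rho[\omega^0](\bar\gamma_i).
\end{equation*}
Applied to the constrained sum defining $\mu^{\omega^0}_\Lambda(\bar\gamma\text{ is a contour of }\sigma)$, the same factorization pulls $\rho[\omega^0](\bar\gamma)$ out and restricts the remaining sum to compatible families $\bar\gamma_1\sim\cdots\sim\bar\gamma_n\sim\bar\gamma$; dividing cancels the $e^{-\beta H_\Lambda(\omega^0\omega^0_{\Lambda^c})}$ factors and produces the equality stated in the lemma.

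Finally, the upper bound is immediate: the ratio of polymer partition functions on the right-hand side is at most $1$, because its numerator sums the non-negative weights $\prod_i\rho[\omega^0](\bar\gamma_i)$ over the sub-collection of families compatible with $\bar\gamma$, whereas its denominator sums the same weights over all compatible families. The only step requiring genuine care is the contour factorization of the excess energy, where one must verify that every edge (and every single-site term) contributing to $H_\Lambda(\omega)-H_\Lambda(\omega^0)$ is attributed to exactly one contour term; this relies crucially on the absence of cycles in $\mathcal{T}^d$, so that the complement of $\gamma\cup\partial\gamma$ is a disjoint union of subtrees on which $\omega$ coincides with $\omega^0$. The remainder of the argument is pure bookkeeping.
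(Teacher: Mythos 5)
Your proof is correct and follows the same approach the paper uses (and mostly leaves implicit): decompose by the contour through $v$, factorize the excess energy over contours using the tree structure, write the partition function as a polymer sum, and bound the resulting ratio by $1$ since the numerator's family sum runs over a sub-collection of the denominator's. The paper essentially states the identity in the lemma and lets the bound follow by inspection, so your more explicit write-up of the factorization and the $\leq 1$ step is the intended argument.
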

\begin{defn}[Set of broken bonds]\emph{}\\
We define the set of broken bonds of the configuration $\omega\in\Omega$ by
$$D(\omega)=\{(v,w)\in E:\omega_v\neq\omega_w\}$$
and denote the set of edges attached to $\gamma\subset V$ by
\begin{equation}
\begin{split}
E(\gamma)&=\{\{x,y\}\in E,  \{x,y\} \cap \gamma \neq \emptyset  \}.
\end{split}
\end{equation}
\end{defn}

{
\begin{lemma}[Entropy bound, {[\cite{GRS12},Lemma 6]}]\label{lem-entropy}
	We have the following upper bound on the number of connected subsets of vertices of $\mathcal T^d=(V,E)$:
	$$\#\{\gamma\subset V : \gamma \text{ connected ,} \gamma\ni 0,|\gamma|=\ell\}\leq(d+1)^{2(\ell-1)}.$$
\end{lemma}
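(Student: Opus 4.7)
The plan is to prove the entropy bound by the standard contour-walk encoding, which is essentially a depth-first traversal argument. Since $\mathcal T^d$ is a tree, any connected subset $\gamma\subset V$ is itself a subtree, and a subtree on $\ell$ vertices has exactly $\ell-1$ edges. The idea is to injectively encode each such $\gamma$ (containing the root $0$) by a walk on $\mathcal T^d$ that starts at $0$, stays inside $\gamma$, and traverses each edge of $\gamma$ exactly twice (once ``outward'' and once ``backward''), so that the walk has length exactly $2(\ell-1)$.

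The key steps I would carry out are the following. First, I would fix an arbitrary total order on the $d+1$ neighbors of every vertex of $\mathcal T^d$ (this gives a well-defined notion of ``smallest unvisited neighbor''). Second, given $\gamma$ with $0\in\gamma$ and $|\gamma|=\ell$, I would run a depth-first search from $0$ restricted to $\gamma$: at each step, if the current vertex has an unvisited neighbor in $\gamma$, move to the smallest such neighbor; otherwise backtrack along the edge one came from. Since $\gamma$ is a finite subtree and the walker traverses each of its $\ell-1$ edges exactly twice, the resulting walk $w(\gamma)$ has length exactly $2(\ell-1)$. Third, I would observe that from $w(\gamma)$ one recovers $\gamma$ as the set of vertices visited, so the map $\gamma\mapsto w(\gamma)$ is injective.

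It then remains to bound the number of possible walks. At each of the $2(\ell-1)$ steps, the walker selects one of the neighbors of its current position, and there are at most $d+1$ such neighbors in $\mathcal T^d$ (with a backtracking step corresponding to choosing the unique parent). Hence the total number of admissible walks of length $2(\ell-1)$ starting at $0$ is at most $(d+1)^{2(\ell-1)}$, yielding
\[
\#\{\gamma\subset V:\gamma\text{ connected},\ \gamma\ni 0,\ |\gamma|=\ell\}\leq (d+1)^{2(\ell-1)},
\]
as required.

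The only mildly delicate point is the bookkeeping in step two: one must check that the chosen DFS never leaves $\gamma$ and that each edge is indeed used exactly twice, so that the length is deterministically $2(\ell-1)$ rather than merely bounded by it; this is a standard property of DFS on finite trees. There is no real analytical obstacle here, only the risk of an off-by-one in the exponent, which is why I emphasize the exact count $\ell-1$ of edges and the factor $2$ coming from the two orientations of each edge in the traversal.
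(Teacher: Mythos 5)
Your proof is correct. The paper itself does not give a proof of this lemma---it simply cites it as \cite[Lemma~6]{GRS12}---so there is no in-paper argument to compare against. The depth-first-search contour-walk encoding you give is the standard way to establish this entropy bound: since a connected subset of a tree is a subtree with exactly $\ell-1$ edges, the DFS traversal is a closed walk of length exactly $2(\ell-1)$ from $0$ determined by $\gamma$ (and the fixed neighbor ordering), the map $\gamma\mapsto w(\gamma)$ is injective because the vertex set of $\gamma$ is recoverable from the walk, and a walk of length $2(\ell-1)$ on the $(d+1)$-regular tree $\mathcal T^d$ admits at most $(d+1)^{2(\ell-1)}$ realizations. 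Your bookkeeping on the exponent is right; the only point worth stating explicitly is that the DFS restricted to $\gamma$ is well defined and terminates after exactly $2(\ell-1)$ steps because $\gamma$ is a finite tree, which you do flag. This argument is in fact slightly cleaner than encodings that allow walks of length $\le 2(\ell-1)$ and pad, since the exact length removes any need to sum over shorter walks.
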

}

\begin{lemma}[Excess Energy Lemma {[\cite{CoKuLe22}, Lemma 2]}]
	\label{lem-EEL}\emph{}\\
	Let $\mu$ be a central state of any model {with Hamiltonian} in the class \eqref{potential-central} fulfilling the bounds \eqref{uU-condition} and \eqref{hyp-psi}.
	Let $\bar\gamma=(\omega'_{\gamma},\gamma)$ 
	be a contour relative to the fixed configuration 
	$\omega$. 
	Denote $\omega'=(\omega'_{\gamma}\omega_{\gamma^c})$ 
	the corresponding excited spin configuration. 
	Then the excess energy satisfies
	\begin{equation}
	\begin{split}	
	\label{XSfinite}
	H(\omega')&- H(\omega) 
	\geq \\
	&(d-1)u |\gamma|-(U+u) | D(\omega)\cap E(\gamma) | 
	+\sum_{v\in\gamma} (\Psi(\omega'_v)-\Psi(\omega_v)).
	\end{split}
	\end{equation}
	In particular, if there exist $\delta\geq0$ and $\epsilon>0$ such that $| D(\omega)\cap E(\gamma) | \leq\delta|\gamma|$ and $\Vert\Psi\Vert_\infty\leq\epsilon/2$ then
	\begin{equation}	\label{EEL-constant}
		H(\omega')- H(\omega) 
		\geq |\gamma|\Big((d-1)u-(U+u)\delta-\epsilon\Big).
	\end{equation}
\end{lemma}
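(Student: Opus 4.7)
The plan is a direct edge-by-edge comparison of Hamiltonians, exploiting the tree structure to count the relevant edges and the ferromagnetic lower bound \eqref{uU-condition} to control the sign of each contribution. First I would split
\begin{equation*}
H(\omega')-H(\omega)=\sum_{b\in E(\gamma)}\bigl(\Phi_b(\omega')-\Phi_b(\omega)\bigr)+\sum_{v\in\gamma}\bigl(\Psi(\omega'_v)-\Psi(\omega_v)\bigr),
\end{equation*}
which is legitimate because $\omega'$ and $\omega$ agree off $\gamma$, so all pair terms on bonds disjoint from $\gamma$ cancel and all single-site terms outside $\gamma$ cancel. The second sum is already in the desired form, so everything reduces to a lower bound on the pair part.

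Next I would separate $E(\gamma)$ into the set $\partial_e\gamma$ of boundary edges (one endpoint in $\gamma$, one in $\gamma^c$) and the set $E_{\text{int}}(\gamma)$ of internal edges (both endpoints in $\gamma$). Since $\gamma$ is a connected subset of the Cayley tree $\mathcal T^d$ with no cycles, the standard handshake count gives $|E_{\text{int}}(\gamma)|=|\gamma|-1$ and $|\partial_e\gamma|=(d+1)|\gamma|-2(|\gamma|-1)=(d-1)|\gamma|+2$. I would then refine each of these two sets according to whether the bond belongs to $D(\omega)$ or not, and bound $\Phi_b(\omega')-\Phi_b(\omega)$ in each of the four resulting cases using \eqref{uU-condition}:
\begin{itemize}
\item For $b=\{v,w\}\in\partial_e\gamma\setminus D(\omega)$ with $v\in\gamma$, $w\notin\gamma$: since $\omega_v=\omega_w$ and $\omega'_v\neq\omega_v=\omega'_w$, one has $\Phi_b(\omega)=0$ and $\Phi_b(\omega')\geq u$, giving a gain of at least $u$.
\item For $b\in E_{\text{int}}(\gamma)\setminus D(\omega)$: $\Phi_b(\omega)=0$ and $\Phi_b(\omega')\geq 0$, giving a gain of at least $0$.
\item For $b\in E(\gamma)\cap D(\omega)$ (internal or boundary): the crude estimate $\Phi_b(\omega')-\Phi_b(\omega)\geq 0-U=-U$ will suffice.
\end{itemize}

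Summing these estimates yields, after using $|\partial_e\gamma\setminus D(\omega)|\geq(d-1)|\gamma|+2-|D(\omega)\cap E(\gamma)|$ and dropping the harmless $+2u$,
\begin{equation*}
\sum_{b\in E(\gamma)}\bigl(\Phi_b(\omega')-\Phi_b(\omega)\bigr)\geq(d-1)u|\gamma|-(U+u)\,|D(\omega)\cap E(\gamma)|,
\end{equation*}
which is \eqref{XSfinite}. For the "In particular" part I would simply substitute $|D(\omega)\cap E(\gamma)|\leq\delta|\gamma|$ and use $|\Psi(\omega'_v)-\Psi(\omega_v)|\leq 2\|\Psi\|_\infty\leq\epsilon$ on each of the $|\gamma|$ sites of $\gamma$ to reach \eqref{EEL-constant}.

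The only delicate point I anticipate is making sure no bond is double-counted and that the boundary-edge count $(d-1)|\gamma|+2$ is consistent with the convention used for $E(\gamma)$ in the paper; once the partition of $E(\gamma)$ is written down cleanly, the rest is a bookkeeping computation with no analytic content beyond the definition of $u$ and $U$. The tree structure is essential here, both for the explicit formula $|\partial_e\gamma|=(d-1)|\gamma|+2$ and for the fact that contours have no interior, so one does not need to compensate internal bonds beyond the trivial non-negativity estimate.
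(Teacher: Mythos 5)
Your proof is correct and is the natural argument for this statement. Note that the paper itself does not reprove Lemma~\ref{lem-EEL}: it cites \cite{CoKuLe22}, Lemma~2. Your edge-by-edge decomposition of $E(\gamma)$ into internal and boundary edges, with the tree-specific counts $|E_{\text{int}}(\gamma)|=|\gamma|-1$ and $|\partial_e\gamma|=(d-1)|\gamma|+2$, followed by the four-way case analysis according to membership in $D(\omega)$ and the ferromagnetic bounds \eqref{uU-condition}, is precisely the expected argument; the key observation (a non-broken boundary edge necessarily becomes broken after the excitation, contributing at least $u$, while the worst loss on any edge is $U$) is correctly identified, and dropping the leftover $+2u$ and bounding $|\Psi(\omega'_v)-\Psi(\omega_v)|\le 2\|\Psi\|_\infty$ cleanly yields \eqref{XSfinite} and \eqref{EEL-constant}. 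The bookkeeping is sound and no double-counting occurs.
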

}
Observe that in \eqref{EEL-constant}, if $\epsilon=0$ and $\delta<\frac{(d-1)u}{U+u}$ then the energy cost of the contour $\bar\gamma$ is positive and proportional to its volume $|\gamma|$. This fact was crucially used in our paper \cite{CoKuLe22} to exhibit low temperature local ground states which give rise to a wide family of inhomogenous extremal states. We thus introduce the following definition of bad contours.

\begin{defn}[Bad events] \label{def-bad} 
	Consider any model in the class \eqref{potential-central}. Let 
\begin{equation}\label{def-delta0}
	\delta_0:=\frac12\cdot\frac{(d-1)u}{(u+U)}
\end{equation}
where $u,U\in\R^+$ are the bounds on energy costs defined in \eqref{uU-condition} and $d$ is the branching number of the tree. \\
Denote by $B_v$ the bad event 
that there exists a contour around $v$ with respect to $\omega$ which 
does not have good enough excess energy in the sense that 
\begin{equation}
\begin{split}
B_v&:=\bigcup_{\gamma:\gamma \ni v}B(\gamma) \text{ where }
B(\gamma):=\{\omega: | D(\omega)\cap E(\gamma) | 
\geq \delta_0 |\gamma|\}.
\end{split}
\end{equation}

\end{defn}

\begin{defn} Suppose that $\mu$ is a homogeneous tree-indexed Markov chain for a Hamiltonian in the class \eqref{potential-central} fulfilling the bounds \eqref{uU-condition} and \eqref{hyp-psi}. Let $p_1=p_1(\mu)$ be its lazyness parameter as  in Defintion \ref{def-p1}. Let $\delta_0$ as in \eqref{def-delta0}. Define $\lambda(p_1)=\lambda(p_1,u,U,d)$ as
	\begin{equation}
		\begin{split}
			\label{def-lambda-p1}
			&e^{-\lambda(p_1)}:= 
			\inf_{t\geq 0}e^{-t \delta_0}
			(p_1 e^t+1-p_1)^{(d+1)}
			=\left(\frac{p_1}{\delta_0}\right)^{\delta_0}\Bigl(\frac{1-p_1}{d+1-\delta_0} \Bigr)^{1-\delta_0}.
		\end{split}
	\end{equation}
	Note that $\lambda(p_1)\uparrow\infty$ or equivalently $e^{-\lambda(p_1)}\downarrow 0  \text{ as } 
	p_1\downarrow 0$. In particular, if $\mu_\beta$ is the free state of the $q$-clock model at inverse temperature $\beta$ (see \eqref{p1-free-clock}),
	$$\lambda(p_1(\mu_\beta))\uparrow \infty  \text{ as } 
	\beta\uparrow \infty.$$
\end{defn}

\subsection{Key lemma 
}\label{sec-proofs-1}

\begin{lemma}[Key Lemma] \label{key-lemma}

Let $\mu$ be the central state of any model {with Hamiltonian} in the class \eqref{potential-central} fulfilling the bounds \eqref{uU-condition} and \eqref{hyp-psi}. Then for $\beta$ large enough we have, for $\mu$-almost every\ $\omega$
{
\begin{equation}
\begin{split}
\label{eq-bad-dec}\pi(\sigma_v\neq \omega_v|\omega)
\leq   1_{B_v}(\omega) + \epsilon_1(\beta)
\end{split}
\end{equation}
where the event $B_v$ is defined in \eqref{def-bad}, 
and 
$\epsilon_1(\beta)\downarrow0$ as $\beta\uparrow\infty$.
%
Moreover, 
\begin{equation}\label{proba-bad-contour}
	\mu(B(\gamma))\leq e^{-\lambda(p_1)|\gamma|}
\end{equation}
where $\lambda(p_1)$ is defined in \eqref{def-lambda-p1} and for $p_1$ small enough, for any $v\in V$
\begin{equation}\label{proba-bad-at-v}
	m(p_1):=\mu(B_v)\leq \epsilon_2(p_1)
\end{equation}
where $\epsilon_2(p_1)\downarrow0$ as $p_1\downarrow0$.
}
\end{lemma}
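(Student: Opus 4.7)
The plan decomposes naturally into three steps corresponding to the three displays in the statement: the deterministic Peierls-type bound \eqref{eq-bad-dec}, the exponential contour estimate \eqref{proba-bad-contour}, and the union bound \eqref{proba-bad-at-v}.

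For \eqref{eq-bad-dec}, I would start from the Peierls bound of Lemma \ref{lem-Peierls} applied to the finite-volume Gibbs measure with boundary condition $\omega$,
$$\gamma_\Lambda(\sigma_v \neq \omega_v \mid \omega) \;\leq\; \sum_{\bar\gamma \colon \gamma \ni v}^{\Lambda} \rho[\omega](\bar\gamma),$$
and take the $\Lambda \uparrow V$ limit to transfer the same bound to the kernel $\pi(\sigma_v \neq \omega_v \mid \omega)$, which exists for $\mu$-a.e.\ $\omega$ by Theorem \ref{thm-pi-kernel-extremal}. On the bad event $B_v$ I would use the trivial bound $\pi(\cdot \mid \omega) \leq 1$. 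On $B_v^c$, every contour around $v$ satisfies $|D(\omega) \cap E(\gamma)| < \delta_0 |\gamma|$, so by the Excess Energy Lemma \ref{lem-EEL} together with \eqref{hyp-psi} and the explicit choice $\delta_0 = \frac{(d-1)u}{2(U+u)}$, an arithmetic check gives
$$H(\omega') - H(\omega) \;\geq\; |\gamma| \Bigl((d-1)u - (U+u)\delta_0 - 2\|\Psi\|_\infty\Bigr) \;\geq\; |\gamma| \frac{(d-1)u}{4}.$$
Hence $\rho[\omega](\bar\gamma) \leq e^{-\beta (d-1)u |\gamma|/4}$, and summing using Lemma \ref{lem-entropy} and the at most $(q-1)^{|\gamma|}$ spin-labellings of a contour of support $\gamma$ produces a geometric series in $\ell = |\gamma|$ whose sum defines $\epsilon_1(\beta) \downarrow 0$ as $\beta \uparrow \infty$.

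For \eqref{proba-bad-contour}, I would apply the exponential Chebyshev inequality: for $t \geq 0$,
$$\mu(B(\gamma)) \;\leq\; e^{-t \delta_0 |\gamma|} \mathbb{E}_\mu\Bigl[ e^{t\, |D(\omega) \cap E(\gamma)|} \Bigr].$$
The key step is the moment-generating estimate
$$\mathbb{E}_\mu\Bigl[ e^{t\, |D(\omega) \cap E(\gamma)|} \Bigr] \;\leq\; \bigl(p_1 e^t + 1 - p_1\bigr)^{|E(\gamma)|},$$
which I would prove by induction along the subtree rooted at some vertex $\bar o \in \gamma$, propagating the inequality $f_v(\omega_v) := \mathbb{E}\bigl[\prod_{u \in T_v \setminus \{v\}, e_u \in E(\gamma)} e^{t \mathbf{1}_{\omega_u \neq \omega_{\text{par}(u)}}} \mid \omega_v\bigr] \leq (p_1 e^t + 1 - p_1)^{N(v)}$ from leaves to root. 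The induction step uses that, for each child $c$ of $v$, the conditional probability $p(\omega_v) = \sum_{j \neq \omega_v} P(\omega_v,j)$ that $\omega_c \neq \omega_v$ is bounded by $p_1$ by Definition \ref{def-p1}, and that children's subtrees are conditionally independent given $\omega_v$ by the Markov chain property. The per-edge factor $p(\omega_v)e^t + 1 - p(\omega_v)$ depends on $\omega_v$ in general, but the monotonicity of $p \mapsto pe^t + 1 - p$ for $t \geq 0$ allows the clean replacement by the uniform bound. Combining with the deterministic bound $|E(\gamma)| \leq (d+1)|\gamma|$ and optimizing over $t$ gives precisely \eqref{proba-bad-contour} with the rate $\lambda(p_1)$ defined in \eqref{def-lambda-p1}.

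Finally, \eqref{proba-bad-at-v} follows by a union bound combined with the entropy estimate of Lemma \ref{lem-entropy}:
$$\mu(B_v) \;\leq\; \sum_{\ell \geq 1} \#\{\gamma \ni v \colon |\gamma| = \ell\} \cdot e^{-\lambda(p_1) \ell} \;\leq\; \sum_{\ell \geq 1} (d+1)^{2(\ell-1)} e^{-\lambda(p_1)\ell}.$$
For $p_1$ small enough so that $(d+1)^2 e^{-\lambda(p_1)} < 1$, this geometric series converges and its sum, which I denote $\epsilon_2(p_1)$, tends to $0$ as $p_1 \downarrow 0$. The main obstacle is the moment-generating function estimate: because $\mu$ is a genuine Markov chain rather than an i.i.d.\ product measure, the broken-bond indicators are correlated and a clean product bound is not immediate. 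The monotonicity trick above, together with a careful leaves-to-root induction respecting which edges actually belong to $E(\gamma)$, circumvents this. A smaller but necessary point is that the Peierls bound is stated at finite volume; it extends to the tail kernel $\pi(\cdot \mid \omega)$ because it is uniform in $\Lambda$ and therefore passes through the limit defining $\pi$.
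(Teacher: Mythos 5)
Your proof is correct and follows essentially the same route as the paper: the split of $\pi(\sigma_v\neq\omega_v|\omega)$ over $B_v$ and $B_v^c$, the Peierls bound combined with the Excess Energy Lemma on $B_v^c$, the exponential Chebyshev inequality with the tree-Markov-chain moment-generating bound for $\mu(B(\gamma))$, and the entropy sum for $\mu(B_v)$. You spell out the leaves-to-root induction and the monotonicity-in-$p$ step for the moment-generating estimate, which the paper handles more tersely (by conditioning on $\sigma_x=a$ and "successive applications of the homogeneous transition matrix"), but the argument is the same.
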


\proof {We decompose 
\begin{align}
	\pi(\sigma_v\neq \omega_v|\omega)=\pi(\sigma_v\neq \omega_v|\omega)(1_{B_v}(\omega)+1_{B^c_v}(\omega))
\end{align}
and} we use the {infinite-volume} version of the Peierls bound of Lemma \ref{lem-Peierls} (removing the restriction to finite volume always gives an upper bound) applied to the complement of a bad event at $v$ to bound 
the probability of a mismatch at $v$ by the sum over contours attached at $v$. 
This gives  
\begin{equation}
\begin{split}
\label{XSpSOS2-4l}&
\pi(\sigma_v\neq \omega_v|\omega)1_{B^c_v}(\omega)
\leq \sum_{\bar \gamma, \gamma \ni v}\rho[\omega](\bar \gamma)1_{B^c_v}(\omega)
\end{split}
\end{equation}
where $\rho[\omega](\bar \gamma)$ is the contour activity relative to $\omega$. 
Note that $\omega$ is not assumed to be a local ground state ({in the sense of \cite{CoKuLe22}}), but we may 
still use the bound on the excess energy of a local perturbation {given in Lemma \ref{lem-EEL}}. \\
This delivers, using the assumptions on the potential in the class \eqref{potential-central} and the Definition \ref{def-bad} of $B_v$,
\begin{equation}
\begin{split}
&\rho[\omega](\bar \gamma)1_{B^c_v}(\omega)\cr
&\leq \exp({-\beta (d-1)u |\gamma| + \beta(u+U) | D(\omega)\cap E(\gamma) | +{\Vert\Psi\Vert}_\infty|\gamma|
})1_{B^c_v}(\omega)\cr
&\leq \exp({-\beta |\gamma|((d-1)u - (u+U)\delta_0 -{\Vert\Psi\Vert}_\infty)})\cr
&=\exp({-\beta |\gamma| {(d-1)u}/4 })
\end{split}
\end{equation} 
Using the {entropy bound of Lemma \ref{lem-entropy}} 
we deduce that there exists $C',c'>0$ such that
\begin{equation}
\begin{split}
\label{def-epsilon1-beta}&
\pi(\sigma_v\neq \omega_v|\omega)1_{B^c_v}(\omega)
\leq \sum_{\bar \gamma, \gamma \ni v}e^{-\beta c |\gamma|}
{\leq\sum_{\ell\geq1}(d+1)^{2(\ell-1)}(q-1)^\ell e^{-\beta  \ell (d-1)u/4}}\cr
&\leq C' e^{-c' \beta}=:\epsilon_1(\beta)
\end{split}
\end{equation}
for $\beta>4\log((q-1)(d+1)^2)/(u(d-1))$, which proves \eqref{eq-bad-dec}.

Next we use the exponential Markov inequality to bound $\mu(B_v(\gamma))$ 
and sum over the contours afterwards to bound 
$\mu(B_v)$, as will be described now. 
\begin{align}
	\mu(B_v(\gamma))
	&=\mu\bigl(	 | D(\omega)\cap E(\gamma) | 
	\geq \delta_0 |\gamma|\bigr)\\
	&\leq \inf_{t\geq 0}e^{-t \delta_0 |\gamma|}\mu(e^{t  | D(\omega)\cap E(\gamma) | 
		}) 
\end{align}

It suffices to fix any $x\in \gamma$, look at the conditional measure  
$\mu_a(\cdot ):=\mu(\cdot |\sigma_x=a )$ for any fixed $a\in \Z_q$, and find an upper bound which does not depend on $a$.

Using the Markov chain property of the measure $\mu_a$, successive applications of the homogeneous transition matrix 
$P$ yield
\begin{equation}
\begin{split}
\label{exp-Markov-Bv}
&\mu_a(e^{t  | D(\omega)\cap E(\gamma) | })\leq (p_1 e^t+1-p_1)^{(d+1)|\gamma|}  
\end{split}
\end{equation}
which follows from $$\max_{a\in\Z_q} \sum_{b\neq a}P(a,b)e^t +P(a,a) \leq p_1 e^t + 1-p_1$$ 
and the bound $|E( \gamma) |\leq (d+1)|\gamma|.$
Optimizing over $t$ leads to the bound 
\begin{equation}
\begin{split}
\label{XSpSOS2-4i}
&\mu(B_v(\gamma)){\leq \Bigl(\inf_{t\geq 0}e^{-t \delta_0}
(p_1 e^t+1-p_1)^{(d+1)}  \Bigr)^{|\gamma|}}= e^{-\lambda(p_1)|\gamma|}
\end{split}
\end{equation}
which proves \eqref{proba-bad-contour}, since the definition \eqref{def-lambda-p1} of $\exp(-\lambda(p_1))$ is made to be the value of the above infimum.
Finally using the {entropy bound of Lemma \ref{lem-entropy}} on the number of contours of fixed length 
which are attached to a given point, the estimate for 
$\mu(B_v)$ follows. Indeed, there exists $C>0$ such that for any $c\in(0,1)$ we have
{
\begin{align}\label{def-epsilon2-p1}
	m(p_1):=\mu(B_v)&\leq
	\sum_{\gamma:\gamma\ni v}\mu(B_v(\gamma))
	\leq \sum_{\gamma:|\gamma|\geq 1}e^{-\lambda(p_1)|\gamma|}\nonumber\\
	&\leq \sum_{\ell\geq1}(d+1)^{2(\ell-1)} (q-1)^\ell e^{-\lambda(p_1)\ell}\cr
		&\leq C e^{-c\lambda(p_1)}=:\epsilon_2(p_1)
\end{align}
for $p_1$ sufficiently small, which proves \eqref{proba-bad-at-v}.}
\endproof

%
%

\subsection{Positivity of the Edwards-Anderson parameter}\label{proof-thm-EA}
\begin{proof}[Proof of Theorem \ref{thm-EA}]
	 
The proof is short and elementary, given the reconstruction bound of Theorem \ref{thm-reconstruction}. 
Observe that 
\begin{equation}\label{qEA1}
	\begin{split}
	&q_{\rm EA}^\mu=\sum_{a \in \Z_q}\Bigl(\mu\Bigl( \pi( {\sigma_0=a} | \cdot)^2
\Bigr)	- 
	\mu(\sigma_0=a)^2 \Bigr)\cr
	&=\sum_{a\in \Z_q}\Bigl(\sum_{b \in \Z_q}\mu(\sigma_0=b)\mu\Bigl( \pi( {\sigma_0=a} | \cdot)^2
|\sigma_0=b\Bigr)	- 
	\mu(\sigma_0=a)^2 \Bigr)\cr
	&\geq \sum_{a \in \Z_q}\Bigl(\mu(\sigma_0=a)\mu\Bigl( \pi( {\sigma_0=a} | \cdot)^2
|\sigma_0=a\Bigr)	- 
	\mu(\sigma_0=a)^2 \Bigr)\cr
\end{split}
	\end{equation}
which, by the quantitative reconstruction Theorem \ref{thm-reconstruction} gives the lower bound 
	\begin{equation}\label{qEA1}
	\begin{split}
		q_{\rm EA}^\mu&\geq \sum_{a \in \Z_q}\Bigl(\mu(\sigma_0=a)
	(1-\epsilon_1(\beta))^2 (1- \epsilon_2(p_1))
	-\mu(\sigma_0=a)^2 \Bigr)\\
	&\geq(1-\epsilon_1(\beta))^2 (1- \epsilon_2(p_1))
	-\max_{a\in\Z_q}\mu(\sigma_0=a)
\end{split}
	\end{equation}
which proves the theorem. 
\end{proof}

\subsection{Overlap control for typical extremals}\label{proof-BO}

\subsubsection{Exponential decorrelation of bad sites} 

Suppose that $\mu$ is a homogeneous 
tree-indexed {Markov chain}. 
We do not need to assume any invariances of the measure $\mu$ 
under joint transformations of the local spin-spaces (like a discrete clock-rotation 
or even permutation symmetry). 
We will {only} assume that the jump probability $p_1$ (see Definition \ref{def-p1}), which gives a bound on 
the density of broken bonds in the configuration drawn from $\mu$, 
is small enough. {Using this assumption}, we derive decorrelation between two bad events occuring at the sites $u$ and $v$ (see Definition \ref{def-bad}), with an upper bound 
that is exponential small in the distance between $u$ and $v$. 

\begin{lemma}\label{lem-decorr}  {Consider the central state $\mu$ of any model {with Hamiltonian} in the class} \eqref{potential-central} fulfilling the bounds \eqref{uU-condition} and \eqref{hyp-psi}.
Suppose that the transition matrix $P$ of the state $\mu$ is irreducible 
and aperiodic. Recall the Definition \ref{def-bad} of a bad event $B_v$ at $v\in V$ and the notation for its expectation $\mu(B_v)=:m(p_1)$ (which does not depend on $v$).
Let 
\begin{equation}\label{def-cov-uv}
	\Cov(u,v):=\mu\Bigl((1_{B_v}(\omega)-m(p_1))(1_{B_u}(\omega)-m(p_1))\Bigr).
\end{equation}
Then for small enough $p_1$, there exist $C>0$ such that for any $c_1\in(0,\frac13)$ and $c_2\in(0,1)$, for any $u,v\in V$,
\begin{equation}
\begin{split}
\label{exp-decay-cov}
|\Cov(u,v)|
&\leq C e^{- c |v-u|}
\end{split}
\end{equation}
where $|v-u|$ is the graph distance in the tree and
\begin{equation}
\begin{split}
\label{X22}
&e^{- c}=\max\{e^{-c_1\lambda(p_1)},|\lambda_2(P)|^{c_2} \}
\end{split}
\end{equation}
where $\lambda(p_1)$ is defined in \eqref{def-lambda-p1} and $\lambda_2(P)$ is the second largest eigenvalue (in modulus)
of the transition matrix $P$ of $\mu$. 
\end{lemma}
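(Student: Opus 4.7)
The plan is a two-scale decomposition of the bad events based on a contour-size threshold $L = c_1' d$, with $d := |v-u|$ and $c_1' \in (c_1, 1/3)$ to be fixed. For each $w \in \{u,v\}$, split $B_w = B_w^{\le L} \cup B_w^{>L}$ according to whether there exists a bad contour $\gamma \ni w$ of size at most $L$ or only of size larger. Splitting both indicators this way, the covariance decomposes as
\[ \Cov(u,v) = \Cov(1_{B_u^{\le L}}, 1_{B_v^{\le L}}) + R, \]
where $R$ collects cross terms, each of which contains at least one indicator of a large-contour event; since all indicators are bounded by $1$, $|R| \le 2(\mu(B_u^{>L}) + \mu(B_v^{>L}))$.

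I would first estimate the large-contour contribution, using the Peierls-type bound \eqref{proba-bad-contour} from the Key Lemma together with the entropy bound of Lemma \ref{lem-entropy}: for $p_1$ small enough that $\lambda(p_1)$ dominates $\log((d+1)^2(q-1))$, the geometric sum yields $\mu(B_w^{>L}) \le C e^{-(1-\epsilon)\lambda(p_1) L}$ for any fixed $\epsilon > 0$. Since $c_1' > c_1$ has a little room and $\epsilon$ can be chosen small, this is $\le C e^{-c_1 \lambda(p_1) d}$, producing the first term in \eqref{X22}.

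For the small-contour part, the key observation is that $1_{B_w^{\le L}}$ is $\mathcal F_{\bar B(w, L+1)}$-measurable, where $\bar B(w, L+1)$ is the closed graph ball of radius $L+1$. Since $c_1' < 1/3$, for $d$ large enough these balls are disjoint, and any vertex $z$ on the geodesic from $u$ to $v$ at distance roughly $d/2$ from both endpoints separates them into distinct subtrees. By the tree-Markov property of $\mu$, conditionally on $\sigma_z$ the two events are independent, so letting $f(a) := \mu(1_{B_u^{\le L}} \mid \sigma_z = a)$ and $g(a) := \mu(1_{B_v^{\le L}} \mid \sigma_z = a)$,
\[ \Cov(1_{B_u^{\le L}}, 1_{B_v^{\le L}}) = \sum_{a \in \Z_q} \pi_\mu(a)\,\bigl(f(a) - \pi_\mu(f)\bigr)\bigl(g(a) - \pi_\mu(g)\bigr), \]
with $\pi_\mu$ the single-site marginal of $\mu$, which is invariant under $P$. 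A further application of the Markov property gives $f(a) = \sum_b P^{|z-u|}(a,b)\tilde f(b)$ with $\|\tilde f\|_\infty \le 1$; irreducibility and aperiodicity of $P$ yield the standard spectral-gap estimate $|P^n(a,b) - \pi_\mu(b)| \le C|\lambda_2(P)|^n$, hence $|f(a) - \pi_\mu(f)| \le C'|\lambda_2|^{|z-u|}$ and analogously for $g$. Combining,
\[ |\Cov(1_{B_u^{\le L}}, 1_{B_v^{\le L}})| \le C''\,|\lambda_2|^{|z-u|+|z-v|} = C''\,|\lambda_2|^d \le C'''\,|\lambda_2|^{c_2 d} \]
for $d$ large enough, the last step absorbing the prefactor into the exponent (which is precisely why one only obtains $c_2 < 1$ and not $c_2 = 1$).

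The main delicacy is the choice of the intermediate scale $c_1'$: it must be strictly larger than $c_1$ to absorb the entropy factor in the Peierls estimate, and strictly smaller than $1/3$ to guarantee a separating vertex $z$ at distance $\ge L+1$ from both $u$ and $v$, so that the small-contour events genuinely live in disjoint subtrees once $z$ is removed. Both constraints are compatible exactly because $c_1 \in (0, 1/3)$, and summing the two estimates yields $|\Cov(u,v)| \le C\,e^{-c|v-u|}$ with $e^{-c} = \max(e^{-c_1 \lambda(p_1)}, |\lambda_2|^{c_2})$.
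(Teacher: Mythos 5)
Your strategy mirrors the paper's: split each $B_w$ into a local part (bad contours confined near $w$ --- of size $\le L$ in your version, contained in a half-tree $H_w(r)$ with $r=\lfloor|u-v|/3\rfloor$ in the paper's) and a rare long-contour part; control the latter via the bound \eqref{proba-bad-contour} of the Key Lemma together with the entropy bound of Lemma \ref{lem-entropy}; then decorrelate the local parts through the tree-Markov property at a midpoint $z$ and the spectral gap of $P$. The paper cuts with half-trees and bounds the local covariance by the triangle-inequality estimate \eqref{XSpSOS2-jj}, whereas you cut by contour size and use a Cauchy--Schwarz product bound, but these are cosmetic variants of the same argument.

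One step, however, fails as written. You assert $f(a)=\sum_b P^{|z-u|}(a,b)\tilde f(b)$ with $\|\tilde f\|_\infty\le1$, implicitly treating $B_u^{\le L}$ as decoupled from $\sigma_z$ once $\sigma_u$ is fixed. But $B_u^{\le L}$ is measurable with respect to the spins on the whole closed ball of radius $L+1$ around $u$, not with respect to $\sigma_u$ alone; in particular the spins on the segment of the geodesic $[u,z]$ lying inside that ball form a bridge whose conditional law given $\sigma_u$ still depends on $\sigma_z$, so there is no bounded $\tilde f$ making that identity true (an inverse of $P^{|z-u|}$ would blow up). The correct decomposition routes through the vertex $y$ on $[u,z]$ at distance $L+1$ from $u$ --- the first vertex outside the ball: since $y$ separates the ball from $z$, the tree-Markov property gives $f(a)=\sum_b P^{|z-y|}(a,b)\,\mu(B_u^{\le L}\mid\sigma_y=b)$ with $|z-y|=|z-u|-(L+1)$. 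This is exactly the role of the vertex $y$ in the paper's proof. The repaired exponent is $|z-y_u|+|z-y_v|=(1-2c_1')|u-v|-2$ rather than $|u-v|$; since $c_1'<1/3$ this is still proportional to $|u-v|$, so the conclusion of the lemma is unaffected, but the spectral step as you wrote it is not valid and the claimed rate $|\lambda_2|^{|u-v|}$ is not obtained.
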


The proof uses a combination of two facts. 
First, long bad contours attached to 
a given site are exponentially improbable,  see Lemma \ref{key-lemma}.  
This explains the occurence of $\lambda(p_1)$.
Second there is exponential relaxation of the one-dimensional 
Markov chain obtained by restriction of $\mu$ to a branch, conditionally on a root, which explains the appearance of the second largest eigenvalue $\lambda_2(P)$ of the transition matrix. Of course $|\lambda_2(P)|<1$.

\proof 

We first define several objects and events which are depicted on Figure~\ref{fig-badbad}.
\begin{figure}
	\begin{center}
	\includegraphics[width=10cm]{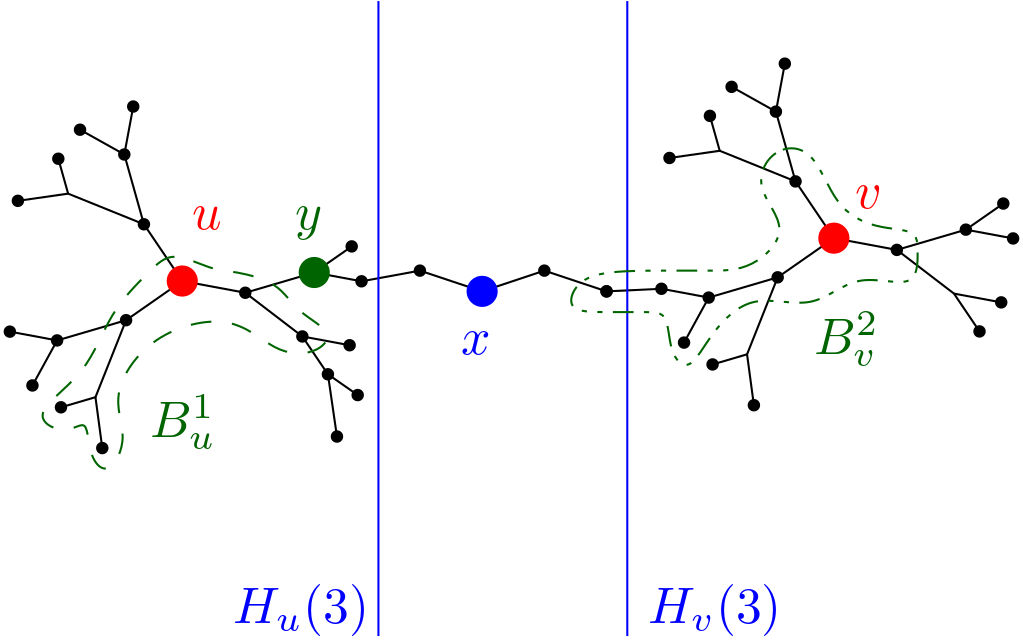}
	\caption{Objects appearing in the proof of Lemma \ref{lem-decorr} of exponential decorrelation of bad events at $u$ and $v$. On the picture $d=2,|u-v|=11, r=3$. Half-trees $H_u(3)$ and $H_v(3)$ are delimited by the two vertical lines. One of the midpoints between $u$ and $v$ is denoted by $x$. The support of the bad event $B_u^1$, depicted with a dashed line, stays inside $H_u(3)$, while the support of the bad event $B_v^2$, depicted with a dotted line, exits $H_v(3)$. The vertex $y$ is the closest to $u$ on the path from $u$ to $x$ which does not touch the support of $B_u^1$. }\label{fig-badbad}
	\end{center}
  \end{figure}
Denote by the half-trees $H_u(r)$ and $H_v(r)$ the sets 
\begin{equation}
\begin{split}
\label{XSpSOS2-66}
&H_u:=H_u(r):=\{x\in V, |x-u|\leq |x-v|-r\}\cr
&H_v:=H_v(r):=\{x\in V, |x-v|\leq |x-u|-r\}\cr
\end{split}
\end{equation}
We will choose $r=\lfloor |u-v|/3\rfloor$. 
Clearly $H_u$ and $H_v$ are disjoint, with their distance growing proportionally 
to $|u-v|$.  
Define 
\begin{equation}
\begin{split}
\label{bad-decomposition}
&B^1_u:=\bigcup_{\gamma: H_u\supset
\gamma \ni u}B_u(\gamma),\quad B^2_u:=B_u\backslash B^1_u\cr
&B^1_v:=\bigcup_{\gamma: H_v\supset
\gamma \ni u}B_v(\gamma),\quad B^2_v:=B_v\backslash B^1_v.\cr
\end{split}
\end{equation}
The sets $B^1_u$ and $B^1_v$ describe bad events at $u$ (resp.\ $v$) which are produced by contours
included in $H_u$ (resp.\ $H_v$). 
The sets $B^2_u$ and $B^2_v$ describe bad events which are not produced by the previous contours and may 
connect $u$ and $v$. \\

Bad event caused by long contours are exponentially improbable 
in the minimal length of such contours. Namely, by the Key Lemma \ref{key-lemma} and the entropy bound given in Lemma \ref{lem-entropy}, there exist $C_1>0$ such that for any $c_1\in(0,\frac13)$ we have
{
\begin{equation}
\begin{split}
\label{}
\mu(B^2_u)=\mu(B^2_v)&\leq 
\sum_{\gamma: \gamma \ni u, \gamma \cap H^c_u\neq \emptyset}\mu(B_u(\bar\gamma))
\leq \sum_{\gamma: \gamma \ni u, |\gamma|\geq|u-v|/3}
e^{-\lambda(p_1)|\gamma|}\\
&\leq \sum_{\ell\geq|u-v|/3}(d+1)^{2(\ell-1)}(q-1)^\ell e^{-\lambda(p_1)\ell}\\
&\leq C_1 \exp({-c_1\lambda(p_1)|u-v|}).
\end{split}
\end{equation}}
Spelling out the correlation function between the bad events in terms 
of the decomposition \eqref{bad-decomposition}, we therefore have 
\begin{equation}
\begin{split}
\label{XSpSOS2-99}
&|\Cov(u,v)|=|\mu(B^1_u;B^1_v)+\mu(B^1_u;B^2_v)+\mu(B^2_u;B^1_v)+\mu(B^2_u;B^2_v)|\cr
&\leq |\mu(B^1_u;B^1_v)|+ 3 \min\{\mu(B^2_u),\mu(B^2_v)\}\\
&\leq |\mu(B^1_u;B^1_v)|+ 3C_1 \exp({-c_1\lambda(p_1)|u-v|}).\cr
\end{split}
\end{equation}
It remains to control the decay of $|\mu(B^1_u;B^1_v)|$. 
We would have the exact decorrelation 
$\mu(B^1_u;B^1_v)=0$ in the free state of a clock model, but 
for an non-symmetric model there is no reason to assume that such 
decorrelation holds.  
Nevertheless, we may use the {Markov chain} property of $\mu$ along the path from $u$ to $v$ and 
the exponential convergence of the corre{s}ponding {1d-}Markov chain, to arrive 
at the decorrelation which is exponential in the distance $|u-v|$ (with our irreducibility and aperiodicity assumptions).

Indeed, choose $x=x(u,v)$ to be the (or one of the at most two) middle sites
between $u$ and $v$ (which means that the distances $|u-x|$ and $|v-x|$ differ at most by one).
From the tree-indexed Markov chain property of $\mu$, using conditional independence, we have 
\begin{equation}
\begin{split}
\label{XSpSOS2-yy}
&\mu(B^1_u\cap B^1_v)=\sum_{a\in\Z_q} \mu(B^1_u|\sigma_x=a)\mu(B^1_v|\sigma_x=a)\mu(\sigma_x=a).
\end{split}
\end{equation}
Let $\Delta_{u,x}(a):=\mu(B^1_u|\sigma_x=a)-\mu(B^1_u)$ and similarly for $v$. We have 
 \begin{align}
\label{XSpSOS2-jj}
|\mu(B^1_u; B^1_v)|&	=
\big|\sum_{a\in\Z_q} \Bigl(\mu(B^1_u)\Delta_{v,x}(a)+\mu(B^1_v)\Delta_{u,x}(a)+\Delta_{u,x}(a)\Delta_{v,x}(a)\Bigr)\mu(\sigma_x=a)\big|\nonumber\\
&\leq2\max_{a\in\Z_q}|\Delta_{u,x}(a)|+\max_{a\in\Z_q}|\Delta_{v,x}(a)|.
\end{align}
By symmetry it suffices to bound $|\Delta_{u,x}(a)|$. 
Choose $y$ on the path $[x,u]$, between $x$ and $u$, such that it is the closest to $u$ with the property that all the contours appearing in the definition of $B^1_u$ do not touch 
$(x,y]$. 
To control the difference between $\mu(B^1_u|\sigma_x=a)$ and $\mu(B^1_u)$
we use the exponential relaxation of the homogenous 1d Markov chain which arises as restriction of $\mu$, on the path from  $x$ to $y$.
There exists $C_2>0$ such that for any $c_2\in(0,1)$, 
\begin{equation}
\begin{split}
\label{exp-decorrelation-1dMC}
&|\Delta_{u,x}(a)|=|\sum_{b\in\Z_q} \mu(B^1_u|\sigma_y=b)( \mu(\sigma_y=b|\sigma_x=a) -\mu(\sigma_y=b))|\cr
&\leq \sum_{b\in\Z_q} |\mu(\sigma_y=b|\sigma_x=a) -\mu(\sigma_y=b))|\cr
&=\sum_{b\in\Z_q} |P^{|x-y|}(a,b) -\mu(\sigma_y=b))|\cr
&\leq C_2 |\lambda_2(P)|^{c_2|x-y|}
\end{split}
\end{equation}
where the last line follows e.g.\ from \cite[Example 4.3.9: 
Rates of Convergence via the Perron-Frobenius Theorem]{Bremaud}. 
As by construction $|y-x|\sim \frac{1}{6}|u-v|$, as $|u-v|\uparrow\infty$, this implies the 
desired exponential decorrelation of  $|\mu(B^1_u;B^1_v)|$, and concludes the proof. 
\endproof

\subsubsection{Almost sure convergence of empirical means  of bad sites  
}

As a consequence of the exponential decorrelation of bad events we 
harvest the following lemma. 
\begin{lemma} \label{lem-meanbad}
	Consider the central state $\mu$ of any model {with Hamiltonian of the form}  \eqref{potential-central} fulfilling the bounds \eqref{uU-condition} and \eqref{hyp-psi}.
	{For any {sequence of subsets} $\Lambda_n=\Lambda_n^r$ of a branch of the tree such that $\sum_{n=1}^\infty 
	{|\Lambda_n|}^{-1}<\infty$ and}
for $\mu$-almost every $\omega$, the {following} limit holds 
\begin{equation}
\begin{split}
\label{XSpSOS2-hju}
&\lim_{n\uparrow \infty}
\frac{1}{|\Lambda_n|}\sum_{v\in \Lambda_n}1_{B_v}(\omega)=m(p_1)
\end{split}
\end{equation}
where $m(p_1)=\mu(B_0)$.
\end{lemma}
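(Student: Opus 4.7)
The strategy I would follow is a classic second-moment (Chebyshev) plus Borel--Cantelli argument, using Lemma~\ref{lem-decorr} as the sole probabilistic input. Set
\[
X_n(\omega) := \frac{1}{|\Lambda_n|}\sum_{v\in\Lambda_n} 1_{B_v}(\omega).
\]
Since $\mu$ is a homogeneous tree-indexed Markov chain and $B_v$ depends on the configuration around $v$ through a translation-invariant recipe, one has $\mu(B_v)=m(p_1)$ for every $v$, and hence $\mathbb{E}_\mu[X_n]=m(p_1)$. So the whole task reduces to proving $X_n\to m(p_1)$ almost surely.

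The key step is a variance estimate. Expanding,
\[
\mathrm{Var}_\mu(X_n)=\frac{1}{|\Lambda_n|^2}\sum_{u,v\in\Lambda_n}\Cov(u,v).
\]
Because the $v_i$ sit on a single branch with $|v_{i+1}-v_i|=r_i\geq 1$, the graph distance between $v_i$ and $v_j$ is at least $|i-j|$. Lemma~\ref{lem-decorr} then gives $|\Cov(u,v)|\leq Ce^{-c|u-v|}$, so for each fixed $v_i\in\Lambda_n$,
\[
\sum_{j=1}^{n^2} e^{-c|v_i-v_j|}\leq \sum_{k\in\mathbb{Z}} e^{-c|k|}=:K<\infty.
\]
Summing over $i$ yields $\sum_{u,v\in\Lambda_n}|\Cov(u,v)|\leq CK|\Lambda_n|$ and therefore
\[
\mathrm{Var}_\mu(X_n)\leq \frac{CK}{|\Lambda_n|}.
\]

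From here Chebyshev's inequality gives, for every $\varepsilon>0$,
\[
\mu\bigl(|X_n-m(p_1)|>\varepsilon\bigr)\leq \frac{CK}{\varepsilon^2|\Lambda_n|}.
\]
The hypothesis $\sum_n |\Lambda_n|^{-1}<\infty$ makes the right-hand side summable in $n$, so the Borel--Cantelli lemma yields $\mu$-a.s.\ $\limsup_n|X_n-m(p_1)|\leq\varepsilon$. Intersecting over a countable sequence $\varepsilon_k\downarrow 0$ delivers $\mu$-a.s.\ convergence $X_n\to m(p_1)$, which is \eqref{XSpSOS2-hju}.

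The only delicate point I foresee is verifying that the distance bound $|v_i-v_j|\geq|i-j|$ (used in the variance estimate) is indeed available under the standing assumption that the $v_i$ are chosen along a branch with prescribed spacings $r_i$; this is essentially by construction of $\Lambda_n^{\mathrm{r}}$, since moving one step along the branch increases the graph distance by exactly one. Apart from this bookkeeping, the proof is routine given Lemma~\ref{lem-decorr}.
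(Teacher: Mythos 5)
Your proof is correct and follows essentially the same route as the paper: both use the exponential decorrelation bound from Lemma~\ref{lem-decorr} to show $\mathrm{Var}_\mu(X_n)\leq C/|\Lambda_n|$, then combine Chebyshev's inequality with the summability hypothesis $\sum_n|\Lambda_n|^{-1}<\infty$ and Borel--Cantelli to conclude. The only cosmetic difference is that the paper phrases the final variance bound via a uniform-in-$v$ estimate $\sum_{u>v}|\Cov(u,v)|\leq C'$, while you spell out the same geometric-series bound explicitly via $|v_i-v_j|\geq|i-j|$.
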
 
While the limit in question, if it exists is necessarily tail-trivial, the statement is non-trivial 
as the measure $\mu$ is not extremal at low temperature.  
So it must be proved manually.

\proof 
The almost sure limit holds, if we can 
show that for any fixed $\delta>0$ the number of indices
$n$ for which 
\begin{equation}
\begin{split}
\label{XSpSOS2-yuy}
&\frac{1}{|\Lambda_n|}\Bigl | \sum_{v\in \Lambda_n}
(1_{B_v}(\omega)-m(p_1))\Bigr | \geq \delta 
\end{split}
\end{equation}
is finite, for $\mu$-almost every $\omega$. 
By Borel-Cantelli it suffices to show that 
\begin{equation}
\begin{split}
\label{bb}
&\sum_{n=1}^\infty
\mu\Bigl(\frac{1}{|\Lambda_n|}\Bigl |\sum_{v\in \Lambda_n}
(1_{B_v}(\omega)-m(p_1))\Bigr | > \delta \Bigr)<\infty.	
\end{split}
\end{equation}
For this we use the quadratic Chebychev inequality to bound the probability 
in the above sum by 
\begin{equation}
\begin{split}
\label{jki}
& \frac{1}{|\Lambda_n|^2 \delta^2}\sum_{v,u\in \Lambda_n}
\mu\Bigl((1_{B_v}(\omega)-m(p_1))(1_{B_u}(\omega)-m(p_1))\Bigr).
\end{split}
\end{equation}
Recalling the definition \eqref{def-cov-uv} of site covariances $\Cov(u,v)$ and using Lemma \ref{lem-decorr}, we deduce that there exists $C>0$ such that 
\begin{equation}
\begin{split}
\label{mll}
&\sum_{n=1}^\infty\frac{1}{|\Lambda_n|^2}\sum_{v,u\in \Lambda_n}\Cov(u,v)\cr
&\leq \sum_{n=1}^\infty
\Biggl(\frac{1}{|\Lambda_n|}\Cov(0,0)+ \frac{2}{|\Lambda_n|^2} 
\sum_{v\in \Lambda_n}\sum_{u\in \Lambda_n, u>v}\Cov(u,v)\Biggr)\cr
&\leq C \sum_{n=1}^\infty\frac{1}{|\Lambda_n|}<\infty
\end{split}
\end{equation}
since for any fixed vertex $v\in V$, the sum $\sum_{u>v}\Cov(u,v)\leq C'<\infty$ uniformly in $v$ by Lemma \ref{lem-decorr}.
This proves the existence of the almost sure limit of empirical 
sums of indicators of bad points, along the quadratic volume sequences defined in \eqref{lambda-n-r}. 
\endproof

\subsubsection{Concentration of spin-overlaps  under typical extremals}\label{GoodBad}

\begin{lemma}\label{lem-overlap} 
	Consider the central state $\mu$ of any model {with Hamiltonian} in the class \eqref{potential-central} fulfilling the bounds \eqref{uU-condition} and \eqref{hyp-psi}.\\
	There exist a sequence of spacings $r=(r_i)_{i\in \N}$, 
	in general depending on model parameters, such that
	for any {sequence of subsets} $\Lambda_n=\Lambda^r_n$ of a branch of the tree such that $\sum_{n=1}^\infty 
	{|\Lambda_n|}^{-1}<\infty$, the following holds. \\For $\mu$-almost every $\omega$, for $\pi(\cdot |\omega)$-almost every realizations of $\sigma$ 
we have  
\begin{equation}
\begin{split}
\label{limit-concentration}
\limsup_{n\uparrow\infty}\frac{1}{|\Lambda_{n}|}\Bigl| 
\sum_{v\in \Lambda_{n}}(1_{\sigma_v\neq\omega_v}- \pi(\sigma_v\neq\omega_v|\omega))\Bigr|=0.
\end{split}
\end{equation}
\end{lemma}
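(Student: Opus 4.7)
The strategy is a quenched second moment estimate combined with a Borel-Cantelli argument. Let $X_v(\sigma) := \mathbf{1}_{\sigma_v \neq \omega_v} - \pi(\sigma_v \neq \omega_v | \omega)$ and $S_n := \frac{1}{|\Lambda_n|}\sum_{v\in\Lambda_n} X_v$. I fix $\omega$ in a $\mu$-full-measure set on which $\pi(\cdot|\omega)$ is extremal (Theorem \ref{thm-pi-kernel-extremal}), hence a tree-indexed Markov chain by \cite{HOG}~Theorem~12.6. The variables $X_v$ are then centered under $\pi(\cdot|\omega)$ and bounded by $1$, so Chebychev's inequality gives
$$
\pi\bigl(|S_n|>\delta \mid \omega\bigr) \;\leq\; \frac{1}{\delta^2|\Lambda_n|^2}\Bigl(|\Lambda_n| + 2\!\!\sum_{i<j:\,v_i,v_j\in\Lambda_n}\!|\Cov_{\pi(\cdot|\omega)}(X_{v_i},X_{v_j})|\Bigr).
$$

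The crucial step is a uniform exponential decay of the covariances along the branch of the form
$$
|\Cov_{\pi(\cdot|\omega)}(X_{v_i},X_{v_j})| \;\leq\; C\,\rho^{|v_j-v_i|}
$$
for constants $C>0$ and $\rho\in(0,1)$ depending only on the model parameters $\beta,u,U,d,q$, but not on $\omega$. Since the $\Lambda_n$ lie on a single branch and $\pi(\cdot|\omega)$ is tree-indexed Markov, the sequence $(\sigma_{v_i})_{i\geq 0}$ is, under $\pi(\cdot|\omega)$, a (generally inhomogeneous) one-dimensional Markov chain whose one-step kernels $P_i$ are given by formula \eqref{transition-matrix} applied to the boundary law $\lambda^\omega$ associated to $\pi(\cdot|\omega)$ via Theorem~\ref{thm-BL}. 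I plan to establish a Dobrushin-type uniform minorization $\min_{a,b}P_i(a,b) \geq \epsilon_0=\epsilon_0(\beta,u,U,d,q)>0$. The strict positivity of the transfer matrix (its entries lie in $[e^{-\beta U},1]$ by \eqref{uU-condition}), propagated through the recursive boundary-law equation, forces the ratio $\lambda^\omega_{vw}(i)/\lambda^\omega_{vw}(j)$ to remain bounded uniformly in $\omega$; substituting into \eqref{transition-matrix} then yields the $\omega$-independent lower bound on $P_i$. Doeblin contraction produces the covariance bound with $\rho := 1 - q\epsilon_0$.

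With this in hand, choose the spacings $r_i\geq r_0$ with $r_0$ large enough that $\sum_{k\geq 1}\rho^{r_0 k}<\infty$. Since $|v_j-v_i|\geq r_0(j-i)$ along the branch, the double covariance sum is bounded by $C'|\Lambda_n|$, so that $\pi(|S_n|>\delta\mid\omega) \leq C''/(\delta^2|\Lambda_n|)$. The hypothesis $\sum_n |\Lambda_n|^{-1}<\infty$ makes this summable in $n$, and Borel-Cantelli yields $\limsup_n |S_n| \leq \delta$ for $\pi(\cdot|\omega)$-almost every $\sigma$; letting $\delta\downarrow 0$ along a countable sequence concludes.

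The principal obstacle is the uniform-in-$\omega$ Dobrushin minorization of the branch transitions $P_i$. A priori the boundary law $\lambda^\omega$ on the off-branch descendants of $v_{i+1}$ can be heavily distorted by the conditioning at infinity through $\omega$, and one must argue that this distortion does not drive the normalized one-step kernel $P_i$ towards degeneracy. This reduces, via the recursive boundary-law equation, to an ellipticity estimate for the ratio $\lambda^\omega_{vw}(i)/\lambda^\omega_{vw}(j)$, which itself follows from the uniform positivity $Q(i,j)\geq e^{-\beta U}$ guaranteed by the assumption \eqref{uU-condition}; making this propagation quantitative is the technical heart of the argument.
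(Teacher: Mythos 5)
Your proposal is correct in outline but takes a genuinely different route from the paper. The paper deliberately avoids trying to prove an $\omega$-uniform rate of decorrelation under $\pi(\cdot|\omega)$ — it remarks that "the rate of convergence may depend on $\omega$" and that an "explicit analysis is difficult, due to the lack of spatial homogeneity." Instead, after the same Chebychev reduction to a summable double covariance sum, the paper integrates over $\omega$ first: it sets $c(u,v):=\int\mu(d\omega)\,|\pi(\sigma_v\neq\omega_v;\sigma_u\neq\omega_u|\omega)|$, observes that $c(u,v)\to0$ as $v\uparrow\infty$ for each fixed $u$ by dominated convergence and extremality, and then \emph{iteratively} chooses the spacings $r_i$ so that $\sum_{j<i}c(v_j,v_i)<i^{-2}$. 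No quantitative decorrelation estimate for the random branch chain is needed; the spacings come out non-explicit. Your argument, by contrast, aims for a quenched, $\omega$-uniform geometric covariance bound $|\Cov_{\pi(\cdot|\omega)}(X_{v_i},X_{v_j})|\leq C\rho^{|v_j-v_i|}$ via a Doeblin minorization of the branch-restricted Markov chain of $\pi(\cdot|\omega)$, which in turn rests on uniform ellipticity of the boundary law $\lambda^\omega$ propagated through the recursion. This ellipticity step does in fact go through: each entry of the transfer operator $Q$ lies in a fixed interval $[c_1,c_2]$ with $c_1>0$ (using \eqref{uU-condition} and the single-site bound \eqref{hyp-psi}), so the recursion gives $\lambda^\omega_{vw}(i)/\lambda^\omega_{vw}(j)\leq(c_2/c_1)^d$ uniformly in $\omega$ and in the oriented edge, hence $\min_{a,b}P_i(a,b)\geq q^{-1}(c_1/c_2)^{d+1}=:\epsilon_0>0$. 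What your approach buys is an explicit, $\omega$-independent rate (and in fact, with $\rho=1-q\epsilon_0<1$, the spacings could even be taken constant here; the condition on $\sum_k\rho^{r_0 k}$ holds for any $r_0\geq1$, so the tuning of the $r_i$ is only genuinely driven by the bad-site decorrelation in Lemma \ref{lem-decorr}, not by this lemma). What the paper's soft argument buys is a shorter proof that completely sidesteps the boundary-law ellipticity machinery, at the cost of non-explicit spacings. Both are valid; you should be aware that you have replaced the paper's dominated-convergence/iterative-spacing step by a technical lemma that you identify, correctly, as the technical heart and that you would need to carry out in detail.
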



\proof 
It suffices to prove that, for any $\delta>0$ we have, for $\mu$-a.e. $\omega$,
\begin{equation}
\begin{split}
\label{zsz}
\pi\left(\frac{1}{|\Lambda_{n}|}\Bigl|\sum_{v\in \Lambda_{n}}
(1_{\sigma_v\neq\omega_v}-\pi(\sigma_v\neq\omega_v |\omega))\Bigr|>\delta \text{ for infinitely many }n \,\middle|\,\omega\right)=0
\end{split}
\end{equation}  
Using the Borel-Cantelli lemma it suffices to show that, for $\mu$-a.e. $\omega$,
\begin{equation}
\begin{split}
\label{zaqs}
\sum_{n=1}^\infty\pi\left(\frac{1}{|\Lambda_{n}|}\Bigl|\sum_{v\in \Lambda_{n}}
(1_{\sigma_v\neq\omega_v}-\pi(\sigma_v\neq\omega_v |\omega))\Bigr|>\delta
\,\middle|\,\omega\right)<\infty.
\end{split}
\end{equation}
Now by the quadratic Chebychev inequality, it is enough to 
show that
\begin{equation}
\begin{split}
\label{sum-finite}
\sum_{n=1}^\infty \frac{1}{|\Lambda_{n}|^2}
\sum_{v,u\in \Lambda_{n}}|\pi(\sigma_v\neq\omega_v;\sigma_u\neq\omega_u
|\omega)|<\infty
\end{split}
\end{equation}
{holds} for $\mu$-a.e. $\omega$.  
We would like to use the abstract fact that 
$\pi(\cdot|\omega)$ is an extremal Gibbs measure, and hence decorrelates {in the sense of (\ref{short})}. Indeed, by extremality of $\pi(\cdot |\omega)$ at fixed $\omega$, for any fixed $u$ and any sequence $v\uparrow\infty$ 
(meaning that $v$ leaves any finite ball) the following non-quantative result holds: For $\mu$-a.e.\ $\omega$,
\begin{equation}
\begin{split}
\label{kiuj}
\lim_{v\uparrow\infty}|\pi(\sigma_v\neq\omega_v;\sigma_u\neq\omega_u|\omega)|=0.
\end{split}
\end{equation}
The problem is that the rate of convergence may depend on $\omega$, {while 
we want a speed which is uniform in all the extremals (in order for the limit \eqref{limit-concentration} to hold along the same sparse sequence of volumes for $\mu$-a.e. $\omega$).} {An explicit analysis 
is difficult, due to the lack of spatial homogeneity.}
To bypass this difficulty, note that to ensure \eqref{sum-finite} it is sufficient to ensure that 
\begin{equation}
\begin{split}
\label{zedf}
\int\mu(d\omega)\sum_{n=1}^\infty \frac{1}{|\Lambda_{n}|^2}
\sum_{v,u\in \Lambda_{n}}|\pi(\sigma_v\neq\omega_v;\sigma_u\neq\omega_u
|\omega)|<\infty.
\end{split}
\end{equation}
Using monotone convergence for non-negative functions 
to interchange the $\mu$-integral and the infinite sum, 
we can deduce this from 
\begin{equation}
\begin{split}
\label{erty}
\sum_{n=1}^\infty \frac{1}{|\Lambda_{n}|^2}
\sum_{v,u\in \Lambda_{n}}\underbrace{\int\mu(d\omega)|\pi(\sigma_v\neq\omega_v;\sigma_u\neq\omega_u
|\omega)|}_{=:{c(u,v)}}<\infty
\end{split}
\end{equation}
By dominated convergence we have, for any $u\in V$,
\begin{equation}
\begin{split}
\label{lopm}
{\lim_{v\uparrow \infty}c(u,v)=0}
\end{split}
\end{equation}
{
We look at the off-diagonal terms in the double-sum \eqref{erty} as in \eqref{mll}. 
Now we can achieve the desired summability of 
\begin{equation}
\begin{split}
\label{offdia}
\sum_{n=1}^\infty \frac{1}{|\Lambda_n|^2}
\sum_{i=1}^{|\Lambda_n|}\sum_{j=1}^{i-1}c(v_j,v_i)<\infty
\end{split}
\end{equation}
by iteratively choosing $v_i$ given $v_1, \dots, v_{i-1}$ on the same branch so large 
that 
$$\sum_{j=1}^{i-1}c(v_j,v_i)<i^{-2}.$$ 
The above choice provides the following upper bound of \eqref{offdia} as 
$$\sum_{n=1}^\infty \frac{1}{|\Lambda_n|}\sum_{i=1}^{\infty}i^{-2}$$
which is finite. \endproof }

\subsubsection{Proof of Theorem \ref{thm-branch-overlap}}\label{proof-thm-branch-overlap}
\proof
Applying Lemma \ref{lem-overlap}, the Key Lemma \ref{key-lemma}, and Lemma \ref{lem-meanbad}, we deduce that for sparse enough sets $\Lambda_n=\Lambda_n^{\rm r}$ the following holds.
For $\mu$-almost every $\omega$, $\pi(\cdot|\omega)${-}almost surely,
\begin{align*}
	\liminf_{n\uparrow\infty}\frac{1}{|\Lambda_n|}\sum_{v\in \Lambda_n}1_{\sigma_v=\omega_v}
	&\geq 1-\limsup_{n\uparrow\infty}\frac{1}{|\Lambda_n|}\sum_{v\in \Lambda_n}\pi(\sigma_v\neq\omega_v|\omega)\\
	&\geq 1-\liminf_{n\uparrow\infty}\frac{1}{|\Lambda_n|}\sum_{v\in \Lambda_n}1_{B_v}(\omega)-\epsilon_1(\beta)\\
	&= 1-\epsilon_1(\beta)-m(p_1)\\
	&\geq 1-\epsilon_1(\beta)-\epsilon_2(p_1)
\end{align*}
with $\epsilon_1(\beta)=C'e^{-c'\beta}$, see \eqref{def-epsilon1-beta}, and $\epsilon_2(p_1)=Ce^{-c\lambda(p_1)}$, see \eqref{def-epsilon2-p1} and Definition~\ref{def-p1}. This proves the theorem.
\endproof

\subsection{From overlap control to almost sure singularity of extremals}\label{proof-cor-glassy}

\proof[Proof of Corollary \ref{cor-glassy}] 
On the one hand we have the following lower bound from Theorem \ref{thm-branch-overlap} and monotone convergence. For sparse enough sets $\Lambda_n=\Lambda_n^{\rm r}$, for $\mu$ almost every $\omega$,
\begin{equation}
\begin{split}
\label{}
&\pi(\underline\phi^\omega|\omega)
=\liminf_{n\uparrow\infty}\frac{1}{|\Lambda_n|}\sum_{v\in \Lambda_n}\pi(\sigma_v=\omega_v|\omega)
\geq 1-\epsilon_1(\beta)-\epsilon_2(p_1).
\end{split}
\end{equation}
On the other hand, we use
\begin{equation}
\begin{split}
\label{sedf}
1_{\sigma_v=\omega_v}\leq 1_{\omega_v=\omega'_v} +1_{\sigma_v\neq\omega'_v}
\end{split}
\end{equation}
to write the upper bound
\begin{equation}
\begin{split}
\label{dcvbg}
\pi(
\underline\phi^\omega|\omega')\leq \limsup_{n\uparrow\infty}\frac{1}{|\Lambda_n|}\sum_{v\in \Lambda_n}1_{\omega_v=\omega'_v} +
\pi\left(\limsup_{n\uparrow\infty}\frac{1}{|\Lambda_n|}\sum_{v\in \Lambda_n}1_{\sigma_v\neq\omega'_v}\middle|\omega'\right).
\end{split}
\end{equation}

The first term in the r.h.s.\ is equal to ${1}/{q}$ for $\mu\otimes\mu$-a.e.\ pair $(\omega,\omega')$, in the case of clock models. Indeed, the Borel-Cantelli argument, together with the Chebychev inequality as in the proof of Lemma \ref{lem-meanbad} 
shows that the desired strong law of large numbers follows (since exponential decorrelation of the local events $(1_{\omega_v=a})_{v\in\Lambda_n^r}$ follows from exponential decorrelation of the 1d Markov chains as in \eqref{exp-decorrelation-1dMC}).}
In the general case of central 
states this term equals $\sum_{a\in \Z_q}\mu(\sigma_0=a)^2$; it is close to $1/q$ 
for perturbed free states of clock models, see Section \ref{sec-small-p1}.

The second term in the r.h.s.\ is $\pi(\cdot | \omega')$-a.s. controlled 
via Theorem \ref{thm-branch-overlap} by 
\begin{equation}
\begin{split}
\label{dfg}
\limsup_{n\uparrow\infty}\frac{1}{|\Lambda_n|}\sum_{v\in \Lambda_n}1_{\sigma_v\neq\omega'_v}\leq \epsilon_1(\beta)+\epsilon_2(p_1).
\end{split}
\end{equation}
So the statement of the theorem follows once 
$$1-\epsilon_1(\beta)-\epsilon_2(p_1)>\sum_{a\in \Z_q}\mu(\sigma_0=a)^2+\epsilon_1(\beta)+\epsilon_2(p_1)$$ which concludes the proof. 
\endproof
\vspace{2cm}

\paragraph*{Acknowledgements.}
C.K. and A.L.N. thank Labex B\'ezout (ANR-10-LABX-58)
and Laboratory LAMA (UMR CNRS 8050) at Universit\'e Paris Est Cr\'eteil (UPEC) for 
various supports. Research of A.L.N. and L.C. have also been supported by the CNRS IRP (International Research Project) EURANDOM ``{\it Random Graph, Statistical Mechanics and Networks}'' and by the LabEx PERSY-VAL-Lab (ANR-11-LABX-0025-01) funded by the French program Investissement d’avenir.

\end{document}